\newtheorem{Theorem}{Theorem}[section]
\newtheorem{Lemma}[Theorem]{Lemma}
\newtheorem{Corollary}[Theorem]{Corollary}
\newtheorem{Proposition}[Theorem]{Proposition}
\theoremstyle{definition}
\newtheorem{Remark}[Theorem]{Remark}
\newtheorem{Definition}[Theorem]{Definition}
\newtheorem{Example}[Theorem]{Example}
\numberwithin{equation}{section}
\numberwithin{figure}{section}
\begin{document}

\title[Antisymmetry of solutions for some weighted elliptic problems]
{Antisymmetry of solutions for some weighted elliptic problems}

\author{Xavier Cabr\'e}
\address{X. Cabr\'e \textsuperscript{1,2},
\textsuperscript{1} Universitat Polit\`ecnica de Catalunya, Departament de Mate-\-m\`{a}tiques, Diagonal 647, 08028 Barcelona, Spain.
\textsuperscript{2} ICREA, Pg. Lluis Companys 23, 08010 Barcelona, Spain}
\email{xavier.cabre@upc.edu}

\author{Marcello Lucia}
\address{M. Lucia, The City University of New York, CSI,  Mathematics Department, 2800 Victory Boulevard, Staten Island,
New York 10314, USA}
\email{mlucia@math.csi.cuny.edu}

\author{Manel Sanch\'on}
\address{M. Sanch\' on,
Grupo AIA - Aplicaciones en Inform\'atica Avanzada, SL, ESADECREAPOLIS Planta 2a Bloc C Portal 1,
Av. Torre Blanca 57, CP-08172 Sant Cugat del Vall\`es, Spain}
\email{manel.sanchon@gmail.com}

\author{Salvador Villegas}
\address{S. Villegas, Universidad de Granada, Departamento de An\'alisis Matem\'atico, Facultad de Ciencias, Campus Fuentenueva, 18071 Granada, Spain}
\email{svillega@ugr.es}

\thanks{The first and third authors were supported by MINECO grant MTM2014-52402-C3-1-P.
They are part of the Catalan research group 2014 SGR 1083. The first author is
member of the Barcelona Graduate School of Mathematics.
The second author was supported by the Simons Foundation Collaboration
Grant for Mathematicians 210368. The third author was also supported by ERC grant 320501
(ANGEOM project).
The fourth author was supported by grant MTM2015-68210-P}

%
%
%
\begin{abstract}
This article concerns the antisymmetry, uniqueness, and
monotonicity properties of solutions to some elliptic functionals involving weights
and a double well potential. In the one-dimensional case, we introduce the
continuous odd rearrangement of an increasing function and we show that it
decreases the energy functional when the weights satisfy a certain convexity-type
hypothesis. This leads to the antisymmetry or oddness of increasing solutions
(and not only of minimizers).
We also prove a uniqueness result (which leads to antisymmetry) where a
convexity-type condition by Berestycki and Nirenberg on the weights is improved to a
monotonicity condition. In addition, we provide with a large class of problems
where antisymmetry does not hold. Finally, some rather partial extensions
in higher dimensions are also given.
\\

\noindent
{\sc Mathematics Subject Classification 2010:}
35J61,    
35B06,   
35B07,   
35Q92   
\\
{\sc Key words:}
Bistable nonlinearity, weights, antisymmetric solutions, continuous odd rearrangement,
monotonicity, uniqueness.
\end{abstract}

\maketitle

\section{Introduction}\label{section1}


Symmetry properties of solutions to nonlinear elliptic problems have been
extensively studied in the literature. For Dirichlet problems with zero
boundary conditions, the Steiner and Schwarz symmetrizations (see~\cite{Kawohl,PolyaSzego})
and the moving planes method~\cite{Alexandrov,GNN} have been successfully applied
to derive symmetry, with respect to a hyperplane, of minimizers or of positive
solutions to many nonlinear problems. For sign changing solutions, for instance
still with zero Dirichlet boundary conditions, it is well known that the symmetry
with respect to a hyperplane may fail. For this, simply consider the Dirichlet
eigenfunctions of the Laplacian in an interval or a ball. Instead, for some of
them, what holds is antisymmetry, as defined next.

A natural question that we address here is whether solutions are {\it antisymmetric}
or {\it odd} with respect to a  hyperplane (and also with respect to certain cones,
as we will see later) whenever the problem is invariant under
the odd reflection of the solution. Aside being interesting for its own sake
(and for a possible answer to an open problem presented below), symmetry
and antisymmetry of solutions of PDEs are important in physics and other fields of
mathematics. For instance, in quantum mechanics, a system of identical bosons
(respectively, fermions) is described by a multiparticle wavefunction which is
symmetric (respectively, antisymmetric) under the interchange of pairs of particles.

For nonzero Dirichlet boundary data, Berestycki and Nirenberg~\cite{BeresNiren} used
the maximum principle, together with different versions of their sliding method, to
give some sufficient conditions that guarantee solutions to be unique and antisymmetric
with respect to a hyperplane passing through the origin.
In \cite{WeiWinter:2005}, Wei and Winter showed that two-peaks nodal solutions  to
$\varepsilon \Delta u - u + |u|^{p-2} u = 0$ in a ball, with zero Dirichlet boundary
conditions, are antisymmetric (with respect to a hyperplane through the origin) when $\varepsilon$ is small.
Extremals of the ratio $\|\nabla u\|_2/ \|u \|_p$ for functions of average zero in a
ball (Neumann boundary conditions) have been considered by Gir\~ao and Weth in~\cite{GiraoWeth},
who showed that they are antisymmetric (with respect to a well chosen hyperplane through the
origin) for $p$ close to $2$, while this is not anymore the case for large $p$.
In~\cite{GT}, Grumiau and Troestler proved that, for $p$ close to $2$,
the least energy nodal solution of $\Delta u + |u|^{p-2} u = 0$ with
zero Dirichlet boundary condition in a ball or an annulus is
unique (up to rotation and multiplicative constant $\pm 1$) and antisymmetric
with respect to a hyperplane passing through the origin.

Our main motivation to study the antisymmetry of solutions is driven by one conjecture
posed by De Giorgi~\cite{DeGiorgi} in 1978. The following is one of its  natural formulations:
{\it Let $u \in C^2(\mathbb R^N)$ be a bounded function which is, on each bounded domain
$\Omega \subset \mathbb R^N$, a minimizer $($under perturbations with compact support in
$\Omega$$)$ of the Allen-Cahn functional
\begin{equation}\label{allencahn}
    E (u, {\Omega}) =  \int_{\Omega} \left\{ \frac{1}{2}|\nabla u|^2 + G(u) \right\} dx  ,
\end{equation}
where $G (u) = (1 - u^2)^2/4$. Is it true that the level sets of $u$ are hyperplanes,
at least if $N\leq 7$}?

Throughout the paper, by minimizer we always mean ``absolute minimizer''.

After the first results in dimensions 2 and 3 in \cite{GhoussoubGui,AmbrosioCabre,AAC},
a breakthrough came with the work by Savin~\cite{Savin} who showed that the above
conjecture is indeed true up to dimension $N\leq 7$. Later, for $N = 9$ del Pino,
Kowalczyk, and Wei~\cite{DelPinoKowalczykWei} constructed a solution~$u$
that is monotone in the direction $x_9$, has limit $\pm 1$
as $x_9 \to \pm \infty$, and has level sets which are not hyperplanes.
A result from~\cite{AAC} guarantees that such
monotone solution is in fact a minimizer of the functional $E$,
providing a counter-example to the above conjecture in dimensions
$N \geq 9$. More recently, Liu, Wang, and Wei~\cite{LWW} have shown the existence
of a minimizer when $N=8$ with level sets that are not hyperplanes. In
dimension 8, however, an important open question that we describe next remains open.

The conjecture of De Giorgi was motivated by a classical result on minimal surfaces. While
every minimizing minimal surface in all of $\mathbb{R}^N$ must be a hyperplane
if $N\leq 7$, Bombieri, De Giorgi, and Giusti \cite{BdGG} established that
\textit{the Simons cone}
$$
\mathcal{C}:=\{(x^1,x^2)\in\mathbb{R}^4\times\mathbb{R}^4:|x^1|=|x^2|\}
$$
is a minimizing minimal surface in $\mathbb{R}^8$ different from a hyperplane.
Therefore, in dimension 8, the canonical counter-example to the conjecture
of De Giorgi should be given by the so-called \textit{saddle-shaped solution} $u$
to the Euler-Lagrange equation of \eqref{allencahn}, \textit{i.e.}, $-\Delta u=u-u^3$.
Namely, a solution $u=u(x^1,x^2)$, where  $(x^1,x^2) \in
\mathbb R^{m} \times \mathbb R^{m}$, in even dimension
$N=2m$ which is radially symmetric in the first $m$ variables and also in the last $m$ variables (\textit{i.e.},
$u=u(|x^1|,|x^2|)$) and antisymmetric under the reflection $\sigma (x^1,x^2) = (x^2,x^1)$ (\textit{i.e.},
$u(|x^2|,|x^1|)=-u(|x^1|,|x^2|)$). In particular, its zero level set is the
Simons cone $\mathcal{C}$ above and
$u$ is odd with respect to $\mathcal{C}$.
While the existence of such antisymmetric solution in dimension $N=2m$ is easy to establish
(\cite{CT1,CT2}), its uniqueness is a more delicate issue and has been established more recently
by the first author \cite{Cabre}. The remaining open problem is the following:

\vspace{2mm}
\textbf{Open question 1.} {\it Is the saddle-shaped
solution a minimizer of $- \Delta u = u - u^3$ in dimensions $N=2m \geq 8$}?
\vspace{2mm}

The saddle-shaped solution in $\mathbb{R}^{2m}=\mathbb{R}^m\times\mathbb{R}^m$
is a function of the two radial variables $s=|x^1|$ and $t=|x^2|$, $u=u(s,t)$.
In these variables the energy functional (up to a multiplicative constant) reads
\begin{equation}\label{omegaR}
  E (u,\Omega_R)
  = \int_{\Omega_R} \Big\{ \frac{1}{2} |\nabla u|^2 + G(u)  \Big\} s^{m-1} t^{m-1} ds\,dt.
\end{equation}
This functional is invariant under odd reflection in the diagonal $\{s=t\}$,
which is the Simons cone $\mathcal{C}$.
Here, for instance we may take $\Omega_R:=\{s>0,\ t>0,\ s^2+t^2<R^2\}$ to be a quarter
of ball in the plane. The saddle-shaped solution is antisymmetric or odd with
respect to $\{s=t\}$. The following open problem will be connected with Open Question~1.

\vspace{2mm}
\textbf{Open question 2.} {\it Are minimizers of \eqref{omegaR} $($for all, or at least for some,
Dirichlet boundary conditions on $\{s>0,\ t>0,\ s^2+t^2=R^2\}$ which are antisymmetric with
respect to $\{s=t\})$ also antisymmetric when $2m\geq 8$ and $R$ is large enough}?
\vspace{2mm}

A positive answer to Open question 2 leads to the corresponding positive answer to Open question 1.
Indeed, if antisymmetry of minimizers holds for the problem in $\Omega_R$ then, by letting $R\rightarrow\infty$,
one obtains an antisymmetric solution in all of $\mathbb{R}^{2m}$ which is a minimizer
(being limit of minimizers in $\mathbb{R}^{2m}$). In particular, this solution
being a minimizer, one easily shows that it is not identically zero (see~\cite{CT1,CT2}).
Thus, by the uniqueness result of \cite{Cabre}, it is the saddle-shaped solution.

Therefore, Open question 2 has a negative answer in dimensions 2, 4, and 6,
since in these dimensions the saddle-shaped solution is known not to be a minimizer
(for instance by the results of Cabr\'e and Terra~\cite{CT1,CT2} on instability of the
saddle solution, or by Savin's \cite{Savin} result).

Note the presence of the weight $s^{m-1} t^{m-1}$ in the energy functional above.
Alternatively, considering coordinates $y=(s+t)/\sqrt{2}$ and $z=(s-t)/\sqrt{2}$,
we would be concerned with oddness in the variable $z$ in the presence of
the weighted measure
$$
2^{m-1}s^{m-1}t^{m-1}\,dsdt=(y^2-z^2)^{m-1}\,dy\,dz,
$$
which is even in $z$, where $z\in [-y,y]$. Note that this weight (as a function of $z$)
is not increasing in $[0,y]$ ---while being increasing is the condition that leads to oddness
(at least in dimension 1) in one of our results, Theorem~\ref{thm:Brahms2}.

Other questions regarding the weighted measure $s^{m-1}t^{m-1}\,ds\,dt$
(or, more generally, $x_1^{A_1}\cdots x_n^{A_N}\,dx_1\cdots dx_n$ coming from multiple
radial symmetries) have been recently studied in \cite{CR-01,CR-02}. They concern sharp
weighted isoperimetric and Sobolev inequalities and were originated from the study of
extremal solutions in explosion (or Gelfand type) problems.

With this motivation in mind, we are led to understand the antisymmetry
of critical points of functionals involving weights. Our paper presents
alternative ways of proving antisymmetry of minimizers and provides
several new uniqueness results for variational problems with weights.
Our main results apply to one-dimensional problems. Some partial
answers in the higher dimensional case ---which however do not allow to
solve the motivating open questions above--- are presented later in this section.

\subsection{One-dimensional case}
In the one-dimensional case, given functions $a$ and $b$ defined on an interval
$[-L, L]$ and a function $G$ satisfying
\begin{equation} \label{eq:A}
\left.
\begin{array}{c}
a,b:[-L,L] \to \mathbb{R} \textrm{ are positive and even }C^1([-L,L])
\textrm{ functions},
   \vspace{3mm}\\
G: \mathbb R \to \mathbb R \textrm{ is a nonnegative and even }C^1([-L,L]) \textrm{ function},
\end{array}
\right\}
\end{equation}
we consider the energy functional
\begin{equation}\label{functional}
  \mathcal{E} (u, (-L,L)) := \int_{-L}^L \left\{\frac{1}{2}(u')^2a(x)+G(u)b(x)\right\} dx
\end{equation}
in
$$
  H^1_m ((-L,L)) := \left\{ u \in H^1 ((-L,L)) \, \colon \,
  u(-L) = -m,\ u(L)=m \right\},
$$
where $m\geq 0$ is given.

Critical points of this functional are solutions of the associated
Euler-Lagrange equation
\begin{equation}\label{E-L}
\left\{
\begin{array}{l}
  -\left(a u'\right)' = b f(u)   \qquad  \hbox{ in } (-L,L),
  \\
  u(L)=-u(-L)= m,
\end{array}
\right.
\end{equation}
where
$$
f=-G'
$$
is an odd nonlinearity. Note that $G$ is defined up to an additive constant and,
therefore, the hypothesis ``$G$ is nonnegative'' in \eqref{eq:A} can be replaced by
``$G$ is bounded from below''.

We define the \textit{flipped} $u_\star$ of a continuous function $u$ in $[-L,L]$ as
\begin{equation}\label{def:flipped}
u_\star(x):=-u(-x) \quad \textrm{for }x\in[-L,L].
\end{equation}
Note that if $u$ is a solution of \eqref{E-L}, its flipped
is also a solution under assumption \eqref{eq:A} (see~Figure~\ref{fig1}).
In addition, $u$ is antisymmetric or odd if and
only if $u=u_\star$. Note also that $\mathcal{E}(u,(-L,L))=\mathcal{E}(u_\star,(-L,L))$
under assumption \eqref{eq:A}.

After an appropriate change of variables $y=\gamma(x)$ (see
\eqref{eq:ChangeVariable} in Section~\ref{subsec2:2}), one
can always reduce the problem either to the case $a\equiv b$ or to
the case $b\equiv 1$ ---something that sometimes will be useful. When
$a\equiv b$, the equation in \eqref{E-L} reads
\begin{equation}\label{nondiv}
-u''-({\rm log}\ a)'u'=f(u)\quad \textrm{in }(-L,L).
\end{equation}
For this last equation, Berestycki and Nirenberg~\cite{BeresNiren} used
several versions of their sliding method to prove uniqueness and antisymmetry results. In the
one-dimen-\-sional case, one of their results states the following. It requires the
first order coefficient $({\rm log}\ a)'$ in \eqref{nondiv} to be nondecreasing.
%
\begin{Theorem} [Berestycki-Nirenberg~\cite{BeresNiren}, Theorem~4.1 and Corollary~4.3]
\label{thm:BerestyckiNirenberg}
Let us assume that \eqref{eq:A} holds, $a \equiv b$, and that $f$ is locally Lipschitz.
Let $L$ and $m$ be positive numbers. If
$$
{\rm log}\,a \textrm{ is convex}
$$
then there exists at most one solution to~\eqref{E-L} satisfying
\begin{equation}\label{apriori}
    -m \leq u \leq m \quad\textrm{in }[-L,L],
\end{equation}
and this solution, if it exists, is odd and increasing.
\end{Theorem}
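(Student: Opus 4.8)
The plan is to establish all three assertions --- strict monotonicity, oddness, and uniqueness --- by the \emph{sliding method}. Since $a\equiv b$, the equation in \eqref{E-L} is, in nondivergence form, $-u''-c(x)u'=f(u)$ in $(-L,L)$ with $c:=(\log a)'$, and the hypothesis ``$\log a$ convex'' means precisely that $c$ is nondecreasing. As $f$ is locally Lipschitz and $u$ is bounded, a bootstrap starting from the divergence form $-(au')'=af(u)$ gives $u\in C^2([-L,L])$, so classical maximum principles, the Hopf lemma, and uniqueness for the (Lipschitz) initial value problem are all at our disposal. Recall from the discussion above that the flipped function $u_\star(x)=-u(-x)$ solves the same problem, because $a$ --- hence $c$ --- is even while $f$ is odd.

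First I would prove the \emph{monotonicity}. For $\tau\in(0,2L)$ set $I_\tau:=(-L,L-\tau)$ and $w_\tau(x):=u(x+\tau)-u(x)$; the boundary data in \eqref{E-L} together with \eqref{apriori} give $w_\tau\ge0$ on $\partial I_\tau$, since $w_\tau(-L)=u(\tau-L)+m\ge0$ and $w_\tau(L-\tau)=m-u(L-\tau)\ge0$. Subtracting the equations for $u(\cdot+\tau)$ and $u$ yields, with $q_\tau$ bounded (from the Lipschitz bound on $f$),
\[
-w_\tau''-c(x)\,w_\tau'-q_\tau(x)\,w_\tau=\bigl[c(x+\tau)-c(x)\bigr]\,u'(x+\tau)\quad\text{in }I_\tau ,
\]
where the bracket is $\ge0$ because $c$ is nondecreasing. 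I would then run a connectedness argument in $\tau$: the set $T:=\{\tau\in(0,2L):w_\tau\ge0\text{ on }\overline{I_\tau}\}$ is closed, and it is nonempty since $w_\tau\to 2m>0$ uniformly on $\overline{I_\tau}$ as $\tau\to(2L)^-$. Once $\inf T=0$ is shown, letting $\tau\downarrow0$ gives $u'\ge0$, and then $u'>0$ in $(-L,L)$: an interior zero $u'(x_0)=0$ would force $u''(x_0)=0$ (as $u'\ge0$), hence $f(u(x_0))=0$ from the equation, so $u$ would coincide with the constant solution $u(x_0)$ by ODE uniqueness, contradicting $u(\pm L)=\pm m$ with $m>0$.

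With $u'\ge0$ in hand, the sign ambiguity of the right-hand side above disappears, and the same sliding scheme yields the other two conclusions. For \emph{oddness}, compare on $I_\tau$ the solution $u$ with the translate $u_\star(\cdot+\tau)$ of its flip: the boundary terms $u_\star(\tau-L)+m=m-u(L-\tau)$ and $u_\star(L)-u(L-\tau)=m-u(L-\tau)$ are $\ge0$, the relevant right-hand side $[c(x+\tau)-c(x)]\,u_\star'(x+\tau)=[c(x+\tau)-c(x)]\,u'(-(x+\tau))$ is $\ge0$ by the monotonicity just proved, and sliding $\tau$ down to $0$ gives $u\le u_\star$ on $(-L,L)$; applying this to the solution $u_\star$ (whose flip is $u$) gives $u_\star\le u$, hence $u\equiv u_\star$. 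For \emph{uniqueness}, given two admissible solutions $u$ and $v$ (both monotone by the first step), comparing $u$ with $v(\cdot+\tau)$ in the identical way gives $u\le v$, and by symmetry $v\le u$, so $u\equiv v$.

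The step I expect to be the main obstacle is closing the connectedness argument in the monotonicity step, i.e.\ proving $\inf T=0$: there the right-hand side $[c(x+\tau)-c(x)]u'(x+\tau)$ cannot be signed a priori --- it is controlled by $u'$, exactly what one is trying to prove nonnegative --- and the linear operator carries a zeroth order coefficient $-q_\tau$ of indefinite sign. The resolution is the standard one for sliding: if $\tau_*:=\inf T>0$, then $w_{\tau_*}\ge0$ by closedness, one rules out (by the strong maximum principle / ODE uniqueness) that $w_{\tau_*}$ vanishes at an interior point, one rules out that it vanishes at both endpoints, and then $\tau_*$ can be lowered slightly while keeping $w_\tau\ge0$ --- working with the interior minimum of $w_\tau$, absorbing $q_\tau$ on short subintervals, and invoking the Hopf lemma at the endpoints. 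Once the monotonicity is secured, the oddness and uniqueness steps are routine applications of the sliding method.
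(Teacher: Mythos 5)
Your oddness and uniqueness steps are fine, but they are exactly the part that the paper itself reproves by elliptic sliding: Proposition~\ref{prop:BerNir} shows that, once a solution is known to be \emph{increasing}, the translate $u(\cdot-\tau)$ is a subsolution (because $[c(x)-c(x-\tau)]u'(x-\tau)\ge 0$ uses $u'\ge 0$) and the sliding/strong maximum principle argument closes; applying this uniqueness among increasing solutions to $u$ and its flipped $u_\star$ gives oddness. Note, however, that Theorem~\ref{thm:BerestyckiNirenberg} is not proved in the paper: it is quoted from \cite{BeresNiren}, and the paper explicitly remarks that their proof of it uses a \emph{parabolic} version of the sliding method, while the paper's own monotonicity result (Theorem~\ref{thm:Increasing}) is obtained by a completely different Hamiltonian argument and needs structural hypotheses on $G$ that are not assumed in Theorem~\ref{thm:BerestyckiNirenberg}.

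The genuine gap is your first step, the monotonicity, and the obstacle you flag is not resolved by the ``standard'' sliding fix you describe. For $w_\tau(x)=u(x+\tau)-u(x)$ the identity you write is correct, but the inhomogeneous term $[c(x+\tau)-c(x)]\,u'(x+\tau)$ has no sign unless one already knows $u'\ge 0$, which is the statement being proved. Consequently, at $\tau_*:=\inf T>0$ the nonnegative function $w_{\tau_*}$ does \emph{not} satisfy any differential inequality of the form $-w''-c\,w'-q\,w\ge 0$ on $I_{\tau_*}$: wherever $u'(\cdot+\tau_*)<0$ the forcing is negative, so an interior zero of $w_{\tau_*}$ contradicts nothing (the strong maximum principle and the Hopf lemma require the signed inequality), and the narrow-domain device of ``absorbing $q_\tau$ on short subintervals'' only handles a zeroth-order coefficient of indefinite sign, not an inhomogeneity of indefinite sign. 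Regrouping the drift (evaluating $c$ at $x+\tau$ instead of $x$) merely trades the forcing for $[c(x+\tau)-c(x)]\,u'(x)$, which is equally unsigned, so the argument is circular. This is precisely why Berestycki and Nirenberg obtain monotonicity under only the a priori bound \eqref{apriori} by running the comparison along the parabolic semiflow (order preservation of the flow plus convergence of translates to stationary solutions), rather than by purely elliptic sliding, and why the present paper either assumes monotonicity (Proposition~\ref{prop:BerNir}) or proves it by the Hamiltonian identity under the extra conditions on $G$ of Theorem~\ref{thm:Increasing}. As written, your proposal therefore establishes uniqueness and oddness of \emph{increasing} solutions satisfying \eqref{E-L}, but not the full statement of Theorem~\ref{thm:BerestyckiNirenberg}; to complete it you would need either the parabolic sliding argument of \cite{BeresNiren} or an independent proof that every solution with \eqref{apriori} is increasing under the mere hypotheses \eqref{eq:A} and log-convexity of $a$.
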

In higher dimensions, an analogous result was proved also in the same paper \cite{BeresNiren}. In fact,
when the domain $\Omega$ is a cylinder $(-L,L)\times\omega$, with $\omega\subset\mathbb{R}^{N-1}$,
they proved monotonicity in the $x_1$ variable, as well as uniqueness and antisymmetry
of solutions of $-\Delta u=h(x,u,\nabla u)$ under suitable symmetry and monotonicity
assumptions on the boundary data and on $h(x,q,p)$.
The main ingredient in their proof of Theorem \ref{thm:BerestyckiNirenberg} is
a parabolic version of the sliding method. They compare translations of the solution
with the solution itself and then apply the maximum principle to obtain monotonicity and
uniqueness of solutions (see the proof of our Proposition~\ref{prop:BerNir} for this kind of argument).
In~\cite{BeresNiren}, it is also observed that, by the maximum principle,
the \textit{a priori} bound \eqref{apriori} in the above theorem is
automatically satisfied by every solution $u$ of \eqref{E-L} (with $a\equiv b$) if for instance
one assumes
%
$$
    G'\geq0 \quad \textrm{in } (m,\infty)=(u(L),\infty).
$$
This is the same as assuming $f\leq0$ in $(m,\infty)$ ---recall that here we assume
$G$ to be even and hence the nonnegativeness of $f$ also in $(-\infty,-m)$ follows.

Our results will complete in several ways, in the one-dimensional case, the above statement
of Berestycki and Nirenberg.
Theorem~\ref{thm:BerestyckiNirenberg} assumes log-convexity of the weight $a$
but only \eqref{eq:A} for the potential $G$ (\textit{i.e.}, that $G$ is even). If, instead, one assumes only
\eqref{eq:A} on the weight $a$  (\textit{i.e.}, that $a$ is even) but also that $G$ is convex, then we also have
uniqueness of solution. This is clear since the energy functional $\mathcal{E}$
will be convex in this case. Our first result improves Theorem~\ref{thm:BerestyckiNirenberg}
by replacing the assumption on log-convexity of $a$ by only the monotonicity of $a$,
at the price of assuming also monotonicity of $G$ in $(0,m)$. In addition, we do not require
the \textit{a priori} assumption \eqref{apriori} on the solution. More precisely,
we establish the following.

\begin{Theorem}  \label{thm:Brahms2}
Let $L$ and $m$ be positive numbers.
Assume that \eqref{eq:A} holds, $a \equiv b$, and that $f=-G'$ is locally Lipschitz.
Suppose further that
\begin{equation}\label{hyp:thm:Brahms2}
   a' \geq 0 \hbox{ in } (0,L),
   \quad
   G \geq G(m) \hbox{ in }  (0, \infty),
   \quad \textrm{and}\quad
   G' \leq 0 \hbox{ in } (0, m).
\end{equation}
Then, problem~\eqref{E-L} admits a unique solution, which is therefore odd.
Furthermore, this solution is increasing.
\end{Theorem}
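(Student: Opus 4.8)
The plan is to reduce the statement to two facts about an arbitrary solution $u$ of \eqref{E-L}: that $u$ is increasing, and that it is the only solution. Oddness is then automatic, since under \eqref{eq:A} the flipped function $u_\star(x)=-u(-x)$ of \eqref{def:flipped} is again a solution of \eqref{E-L}, so uniqueness forces $u=u_\star$. Existence is the easy part: with $a\equiv b$ and \eqref{eq:A} in force, the functional \eqref{functional} is coercive on $H^1_m((-L,L))$ (here $a$ is bounded below by a positive constant and $G\ge 0$) and weakly lower semicontinuous (the gradient term is convex in $u'$, and along a minimizing sequence, bounded in $H^1$, one has $G(u_n)\to G(u)$ uniformly by the compact embedding $H^1((-L,L))\hookrightarrow C([-L,L])$); hence a minimizer exists and is a solution of \eqref{E-L}.

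The heart of the matter is that every solution $u$ is nondecreasing. I would run a sliding/parabolic comparison in the spirit of Berestycki and Nirenberg and of Proposition~\ref{prop:BerNir}, but carried out in the divergence form $-(au')'=af(u)$ rather than in the nondivergence form \eqref{nondiv}, so that the structural hypothesis needed becomes $a'\ge 0$ on $(0,L)$ and not the log-convexity of $a$. For $\tau>0$ one compares $u$ on $(-L,L-\tau)$ with the shift $u^\tau(x):=u(x+\tau)$, which solves $-(a^\tau(u^\tau)')'=a^\tau f(u^\tau)$ with the translated weight $a^\tau(x):=a(x+\tau)$; the inequality $a^\tau\ge a$ on $(0,L-\tau)$, which is exactly $a'\ge 0$ on $(0,L)$, is what makes $u^\tau$ play the role of a supersolution of the equation for $u$. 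Starting from $\tau$ close to $2L$, where $u^\tau>u$ trivially near $x=-L$, one slides $\tau$ down to $0$; at a first interior contact one writes the linear equation satisfied by $w:=u^\tau-u$ (linear, since $f$ is locally Lipschitz), whose zeroth-order coefficient is bounded and whose forcing has a favorable sign thanks to $a^\tau\ge a$ and the evenness of $a$, and applies the strong maximum principle and Hopf's lemma to reach a contradiction. This yields $u^\tau\ge u$ for all admissible $\tau$, i.e. $u$ is nondecreasing. Since $u(-L)=-m$ and $u(L)=m$, monotonicity at once gives the a priori bound \eqref{apriori}, so it need not be assumed; alternatively, that bound follows variationally, because by $G\ge G(m)$ on $(0,\infty)$ and evenness of $G$, truncating at the levels $\pm m$ does not increase \eqref{functional}.

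With monotonicity in hand, uniqueness follows by the same sliding scheme applied to two solutions $u_1,u_2$: comparing $u_1$ with the shifts of $u_2$ and using $a'\ge 0$ to keep the right sign in the comparison, one shows $u_1\le u_2$, and by symmetry $u_1=u_2$. This is precisely the point where $a'\ge 0$ on $(0,L)$ replaces the log-convexity of $a$ in Theorem~\ref{thm:BerestyckiNirenberg}. Strict monotonicity of the solution is then a consequence of uniqueness for the ordinary differential equation $-(au')'=af(u)$: on any subinterval where $u'\equiv 0$ one would have $u\equiv c$ with $f(c)=0$, so by the standard uniqueness theorem for the equivalent first-order Cauchy problem one would get $u\equiv c$ on all of $[-L,L]$, contradicting $u(\pm L)=\pm m$. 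The remaining hypothesis $G'\le 0$ on $(0,m)$, that is $f\ge 0$ in $(0,m)$ and, by oddness of $f$, $f\le 0$ in $(-m,0)$, is used to fix the sign of the lower-order terms in the comparison equations near the levels $\pm m$ and to let the sliding reach $\tau\to 0^+$ without invoking \eqref{apriori}.

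I expect the main obstacle to be exactly the monotonicity step under the weak hypothesis $a'\ge 0$ on $(0,L)$, which carries no convexity of $\log a$: one must choose the right objects to compare, control the extra term produced by the non-translation-invariance of the weight (its sign being favorable precisely because $a$ is nondecreasing on $(0,L)$ and even), and organize the sliding and its boundary bookkeeping on the a priori larger range of values of the solution, rather than on $[-m,m]$, since the bound $-m\le u\le m$ is only recovered a posteriori.
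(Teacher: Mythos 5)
There is a genuine gap, and it sits exactly at the step you yourself identify as the heart of the matter. Your sliding scheme rests on the claim that $a^\tau\geq a$ (i.e.\ $a'\geq 0$, $a$ even) makes the shift $u^\tau(x)=u(x+\tau)$ a supersolution of $-(aw')'=af(w)$. This is not what the hypothesis gives. Writing the shifted equation in nondivergence form, $u^\tau$ satisfies $-(u^\tau)''-\frac{a'(x+\tau)}{a(x+\tau)}(u^\tau)'=f(u^\tau)$, so the supersolution inequality for the unshifted operator amounts to $\bigl(\frac{a'}{a}(x+\tau)-\frac{a'}{a}(x)\bigr)(u^\tau)'\geq 0$, i.e.\ to $a'/a$ being nondecreasing --- precisely the log-convexity of $a$ that Theorem~\ref{thm:Brahms2} is meant to dispense with, and the same condition the paper needs in its own sliding argument (Proposition~\ref{prop:BerNir}). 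Staying in divergence form does not help: the ``forcing'' produced by non-translation-invariance is $\bigl((a^\tau-a)(u^\tau)'\bigr)'+(a^\tau-a)f(u^\tau)$, whose first term involves $a'(x+\tau)-a'(x)$ and has no sign under mere monotonicity of $a$. A concrete obstruction: $a(x)=\exp(\min\{|x|,1\})$ on $[-2,2]$ is even with $a'\geq 0$ on $(0,2)$, yet $a'/a$ drops from $1$ to $0$, so the comparison inequality your sliding needs is false. (There is also the secondary bookkeeping issue you gloss over: since $a$ is even with minimum at $0$, $a(x+\tau)\geq a(x)$ fails on part of $(-L,0)$, so even the restricted inequality does not cover the comparison domain.) The same unjustified coefficient comparison underlies both your monotonicity step and your uniqueness step, so the proposal does not prove the theorem.

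The paper's actual route is different and is the point of the result: it abandons sliding altogether and uses the Hamiltonian-type quantity $\mathcal{H}(x,u,u')=\tfrac12\bigl(a(x)u'\bigr)^2-a(x)b(x)G(u)$ together with the identity $\frac{d}{dx}\mathcal{H}(x,u,u')=-(ab)'\,G(u)$. Monotonicity of any solution (Theorem~\ref{thm:Increasing}~(i), with $x_0=0$) follows because, after normalizing $G(m)=0$, $\mathcal{H}$ along the trajectory is nondecreasing on $(-L,0)$ and nonincreasing on $(0,L)$, hence bounded below by its boundary values $\tfrac12(au')^2(\pm L)>0$, which forces $(au')^2>2abG(u)\geq 0$ and so $u'>0$. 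Uniqueness then comes from integrating this identity: first between a solution $u$ and its flipped $u_\star$ on $(-L_0,L_0)$ up to their first crossing to force $u(0)=0$, and then, for two solutions $u_1,u_2$ vanishing at $0$, on $(0,L_1)$ between consecutive zeros of $u_2-u_1$; Hopf's lemma fixes the signs of the boundary terms and the hypotheses $(ab)'\geq 0$ and $G'\leq 0$ in $(0,m)$ fix the sign of the integral term, yielding a contradiction. Your proposal contains none of this mechanism, and the sliding substitute you offer cannot be repaired under the stated hypotheses.
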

Note that $G'\leq 0$ in $(0,m)$ is simply the hypothesis $f\geq 0$ in $(0,m)$ on the
nonlinearity. It holds for instance in our model case $f(u)=u-u^3$, $G(u)=(1-u^2)^2/4$,
and $m=1$. The other hypothesis, $G\geq G(m)$ in $(0,\infty)$, is also satisfied in
this case.

We will prove Theorem~\ref{thm:Brahms2} for general weights $a$ and $b$ (not
necessarily equal). In this general case the first assumption in \eqref{hyp:thm:Brahms2}
becomes
\begin{equation}\label{ab:inc}
(ab)'\geq 0 \quad\textrm{in }(0,L)
\end{equation}
(in Section \ref{subsec2:2} we explain how one can reduce the problem either
to the case $a\equiv b$ or to $b\equiv 1$).
Our proof uses the Hamiltonian function
\begin{equation}\label{L:intro}
\mathcal{H}(x,q,p):=\frac{1}{2}(a(x)p)^2-a(x)b(x)G(q), \quad
(x,q,p)\in(-L,L)\times \mathbb{R}^2.
\end{equation}
We use it first to prove that any solution $u$ of \eqref{E-L} is increasing if
both~\eqref{ab:inc} and the second assumption in \eqref{hyp:thm:Brahms2} hold. Then,
we are able to prove uniqueness, and hence antisymmetry, of solutions $u$
using the identity $\frac{d}{dx}\mathcal{H}(x,u,u')=-(ab)'G(u)$.

Our second main contribution consists in deriving antisymmetry of
solutions by using a new continuous odd rearrangement. Here we will need
the log convexity assumption, as in the Berestycki-Nirenberg result.
Making a change of variables (see Section \ref{subsec2:2}) we can assume
$b\equiv 1$. In this case, given an \textit{increasing}
function $v \in H^1_m ((-L,L))$, let us call $\rho=\rho(\lambda)$ its
inverse function, \textit{i.e.}, $v(\rho(\lambda))=\lambda$ for all $\lambda
\in[-m,m]$ (see Figure \ref{fig1}).
Recall that functions in $H^1((-L,L))$ are continuous, and thus, the hypothesis
of being increasing is justified. We define (see Definition~\ref{def:rear} below)
the \textit{continuous odd rearrangement} $\{v^t\}$
of $v$, with $0\leq t\leq 1$, as the inverse function $v^t(x)=\lambda$ of
$$
\begin{array}{lcl}
\rho^t(\lambda)&:=&t\rho(\lambda)+(1-t)(-\rho(-\lambda))
\\
&=&t\rho(\lambda)+(1-t)\rho_\star(\lambda)\qquad\textrm{ for all }
\lambda\in [-m,m].
\end{array}
$$
Note that $\rho_\star(\lambda)=-\rho(-\lambda)$, the flipped of $\rho$, is
the inverse function of the flipped $v_\star$ of $v$. For $t=1$ and $t=0$,
$v^t$ coincides respectively with $v$ and its flipped $v_\star$: $v^1=v$ and $v^0=v_\star$.
Moreover, for $t=1/2$, $v^{1/2}$ is always an odd function. We call it
the \textit{odd rearrangement} of $v$.

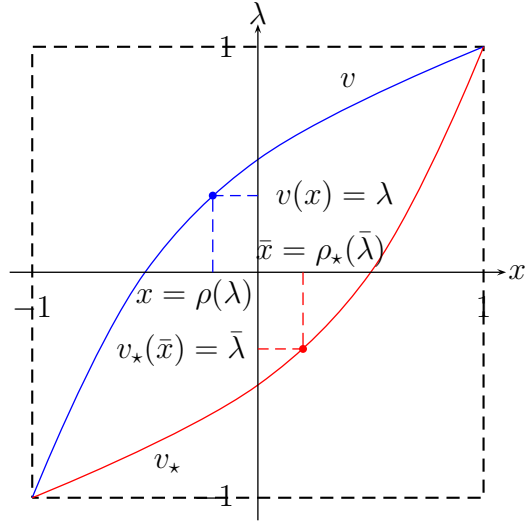
\begin{figure}[ht]\label{fig1}
\begin{center}
\psset{xunit=3cm,yunit=3cm}
\begin{pspicture}(0,1.2)

\psaxes[linestyle=dashed,axesstyle=frame,ticks=none]{->}(0,0)(-1,-1)(1,1)
\pscurve[linecolor=blue,linewidth=0.5pt](-1,-1)(-0.5,0)(0,0.5)(1,1)
\pscurve[linecolor=red,linewidth=0.5pt](-1,-1)(0,-0.5)(0.5,0)(1,1)
\psline[linewidth=0.5pt]{->}(-1.1,0)(1.1,0)
\psline[linewidth=0.5pt]{->}(0,-1.1)(0,1.1)
\rput(0.4,0.85){$v$}
\rput(-0.4,-0.85){$v_\star$}
\rput(1.15,0){$x$}
\rput(0,1.15){$\lambda$}

\psline[linecolor=blue,linestyle=dashed,linewidth=0.5pt]{-*}(-0.2,0)(-0.2,0.34)
\psline[linecolor=blue,linestyle=dashed,linewidth=0.5pt](-0.2,0.34)(0,0.34)
\rput(-0.28,-0.10){$x=\rho(\lambda)$}
\rput(0.34,0.34){$v(x)=\lambda$}

\psline[linecolor=red,linestyle=dashed,linewidth=0.5pt]{-*}(0.2,0)(0.2,-0.34)
\psline[linecolor=red,linestyle=dashed,linewidth=0.5pt](0,-0.34)(0.2,-0.34)
\rput(-0.34,-0.34){$v_\star(\bar x)=\bar \lambda$}
\rput(0.28,0.1){$\bar x=\rho_\star(\bar \lambda)$}
\end{pspicture}
\end{center}
\vspace{2.7cm}
  \caption{A function $v$ and its flipped $v_\star$.}
\end{figure}

One property of the continuous odd rearrangement is that $v$ and $v^t$ are
equidistributed with respect to the weight $b\equiv1$,
\textit{i.e.},
$$
  \left| \{ - \lambda < v^t  < \lambda  \} \right|
  =
  \left| \{ - \lambda < v < \lambda  \} \right|
	=
	\rho(\lambda)-\rho(-\lambda)
$$
for all $t\in[0,1]$ and $\lambda\in[0,m]$.
In particular, the integral $\int_{-L}^L G(v)\ dx$ is preserved under this rearrangement
for every \textit{even} continuous function $G$.

For a general positive weight $b$ we define the continuous odd rearrangement $\{v^t\}$
of $v$ with respect to $b$, with $0\leq t\leq 1$, as the inverse function $v^t(x)=\lambda$ of
$$
\rho^t(\lambda):=B^{-1}\Big(t B(\rho(\lambda))+(1-t)B(\rho_\star(\lambda))\Big)
\quad\textrm{ for all }\lambda\in [-m,m],
$$
where $B(x):=\int_0^x b(y)\, dy$. In this case, $v$ and $v^t$ are also
equidistributed with respect to the weight $b$, and therefore, the integral
$\int_{-L}^L G(v) b(x)\, dx$ is preserved under this rearrangement when $G$ is
continuous and even.

Our main result states that continuous odd rearrangement decreases the
kinetic energy under the hypothesis that $a\equiv b$ is log-convex.
We assume that the given function $v$ is increasing, as in the previous definition.
\begin{Theorem} \label{thm:OddRearrangement}
For $L>0$, $m>0$, let $a$, $b \in C^0([-L,L])$ and $G\in C^0(\mathbb{R})$ be even
functions and
$v \in C^1_m ([-L,L]):=\left\{ w \in C^1 ([-L,L]) \, \colon \,  w(L) = - w(-L)=m \right\}$.
Assume that $v$ is increasing. Then, the continuous odd rearrangement
$v^t$ of $v$ with respect to $b$ satisfies:
\begin{enumerate}
\item[$($a$)$] If $b>0$ then
\begin{equation}\label{equidistributed}
\int_{-L}^L G(v^t) b(x)\, dx = \int_{-L}^L G(v)b(x) \, dx\quad \textrm{for all }0\leq t\leq 1.
\end{equation}
\item[$($b$)$] If $a\equiv b>0$ and ${\rm log}\, a$ is convex, then
the following assertions hold:
\item[$($b$.1)$] For all $0\leq t\leq 1$,
\begin{equation}\label{eq:PolyaSzego}
     \int_{-L}^L\left( \frac{ d v^t }{dx} \right)^2 \! \! a(x) \, dx
     \leq
     \int_{-L}^L \left( \frac{ d v }{dx} \right)^2 \!  \! a(x) \, dx.
\end{equation}
\item[$($b$.2)$] Equality in \eqref{eq:PolyaSzego} holds for some $t\in(0,1)$ if and only if
$v^t=v$ for all $t\in[0,1]$. In such case, $v$ must be odd.
\item[$($b$.3)$] The function $t\longmapsto\mathcal{E}(v^t,(-L,L))$ is convex in $[0,1]$.
\end{enumerate}
\end{Theorem}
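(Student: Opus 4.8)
\medskip
\noindent\emph{Proof proposal.} The plan is to push everything down to the level of the inverse function $\rho$ of $v$ and its flip $\rho_\star$, where the $t$-dependence becomes affine and the asserted monotonicity of the energy turns into a convexity statement. Write $\mathcal{E}(v^t,(-L,L))=\tfrac12 K(t)+P(t)$ with $K(t):=\int_{-L}^L(dv^t/dx)^2a\,dx$ and $P(t):=\int_{-L}^L G(v^t)\,b\,dx$; it is convenient to first assume $v'>0$, so that $\rho\in C^1$, and to recover the general increasing $C^1$ case by approximation. Part~(a) is the easy one: substituting $x=\rho^t(\lambda)$ and using the defining relation $B\circ\rho^t=tB\circ\rho+(1-t)B\circ\rho_\star$, so that $b(\rho^t)(\rho^t)'=(B\circ\rho^t)'=t(B\circ\rho)'+(1-t)(B\circ\rho_\star)'$, gives
$$P(t)=\int_{-m}^m G(\lambda)\,b(\rho^t(\lambda))(\rho^t)'(\lambda)\,d\lambda=t\,P(1)+(1-t)\,P(0),$$
and $P(0)=P(1)$ follows from the change $x\mapsto-x$ together with the evenness of $b$ and $G$ (recall that $\rho_\star$ is the inverse of $v_\star$ and that $B$ is odd). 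Hence $P$ is constant, which is \eqref{equidistributed}.

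For part~(b) let $a\equiv b>0$, put $A(x):=\int_0^x a$ and set $\xi:=A\circ\rho$, $\xi_\star:=A\circ\rho_\star$ (the flip of $\xi$, as $A$ is odd) and $\xi^t:=t\xi+(1-t)\xi_\star=A\circ\rho^t$. Since $a(\rho^t)(\rho^t)'=(\xi^t)'$, the substitution $x=\rho^t(\lambda)$ gives
$$K(t)=\int_{-m}^m\frac{a(\rho^t(\lambda))}{(\rho^t)'(\lambda)}\,d\lambda=\int_{-m}^m\frac{g\bigl(\xi^t(\lambda)\bigr)^2}{(\xi^t)'(\lambda)}\,d\lambda,\qquad g:=a\circ A^{-1}.$$
A one-line computation ($g'=(\log a)'\circ A^{-1}$ and $g''=\bigl((\log a)''\circ A^{-1}\bigr)/g$) shows that ``$\log a$ is convex'' is \emph{equivalent} to ``$g$ is convex''. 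The key point is then the elementary lemma: \emph{if $\phi\ge0$ is convex and $r>0$ is affine on an interval, then $\phi^2/r$ is convex there}; indeed, for $\phi\in C^2$ one computes $(\phi^2/r)''=2\bigl[(\phi'r-\phi r')^2+\phi\phi''r^2\bigr]/r^3\ge0$, the general case following by approximation (or from the joint convexity of $(u,v)\mapsto u^2/v$). For each fixed $\lambda$, $t\mapsto\xi^t(\lambda)$ is affine so $t\mapsto g(\xi^t(\lambda))$ is convex and $\ge0$, and $t\mapsto(\xi^t)'(\lambda)>0$ is affine; hence the integrand $t\mapsto g(\xi^t(\lambda))^2/(\xi^t)'(\lambda)$ is convex. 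Integrating over $\lambda$, $K$ is convex on $[0,1]$, and therefore so is $t\mapsto\mathcal{E}(v^t,(-L,L))=\tfrac12 K(t)+P(t)$; this proves (b.3).

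Now (b.1): the change $x\mapsto-x$ and the evenness of $a$ give $K(0)=\int_{-L}^L(v_\star')^2a\,dx=\int_{-L}^L(v')^2a\,dx=K(1)$, so a convex function on $[0,1]$ with equal endpoint values obeys $K(t)\le K(1)$ for all $t$, which is \eqref{eq:PolyaSzego}. For (b.2), if equality holds there at some $t_0\in(0,1)$ then $K(t_0)=K(0)=K(1)$, so the convex function $K$ is constant; since $(1-t_0)K(0)+t_0K(1)-K(t_0)=0$ and the pointwise chord defect of the (convex) integrand is nonnegative, $t\mapsto g(\xi^t(\lambda))^2/(\xi^t)'(\lambda)$ must be affine on $[0,1]$ for a.e.\ $\lambda$. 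By the equality analysis of the lemma (namely $(\phi^2/r)''=0$ forces $\phi$ affine and $\phi/r$ constant), writing $\delta:=\xi-\xi_\star$ this means: at a.e.\ $\lambda$ with $\delta(\lambda)\ne0$, $g$ is affine on the segment joining $\xi_\star(\lambda)$ and $\xi(\lambda)$ and $\delta$ satisfies the linear homogeneous ODE $\delta'(\lambda)=\dfrac{g'(\xi_\star(\lambda))\,\xi_\star'(\lambda)}{g(\xi_\star(\lambda))}\,\delta(\lambda)$. Since $\delta$ is continuous with $\delta(\pm m)=A(\pm L)-A(\pm L)=0$, running over the connected components of $\{\delta\ne0\}$ and invoking uniqueness for this ODE --- and, in the extreme case $\{\delta\ne0\}=(-m,m)$, also the evenness of $g$, which forces $g$ to be constant near the origin and hence $\delta$ to be constant, hence zero, there --- one concludes $\delta\equiv0$. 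Then $\xi=\xi_\star$, i.e.\ $\rho=\rho_\star$, so $v=v_\star$ is odd and $v^t=A^{-1}\circ\xi^t=A^{-1}\circ\xi=v$ for every $t$; the converse implication is trivial.

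The work is concentrated in two places. The essential use of the log-convexity of $a$ is the reduction ``$\log a$ convex $\Longleftrightarrow$ $g=a\circ A^{-1}$ convex'' feeding into the perspective-type lemma; once the integrand is known to be convex in $t$, both (b.1) and (b.3) follow softly from convexity and the symmetry $K(0)=K(1)$. I expect the genuine obstacle to be the equality case (b.2): one must pin down exactly where the convexity of $\phi^2/r$ degenerates --- precisely where $g$, equivalently $\log a$, is affine --- and then exploit the endpoint condition $\delta(\pm m)=0$ and continuity through the ODE to exclude non-odd competitors. The low regularity of $v$ (only $C^1$ and increasing, so that $\rho$ need not be $C^1$) is a secondary nuisance, dealt with by first treating $v$ with $v'>0$ and then approximating.
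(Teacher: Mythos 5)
Your proposal is correct in substance and, once unpacked, runs on the same machinery as the paper's proof, so let me mainly record where it diverges. Your substitution $\xi=A\circ\rho$ with $A(x)=\int_0^x a$ is exactly the paper's change of variables $\gamma_2$ read at the level of inverse functions, and your equivalence ``$\log a$ convex $\Longleftrightarrow$ $g=a\circ A^{-1}$ convex'' is the identity \eqref{newexp2} (here $\tilde a=g^2$, so $g=\sqrt{\tilde a\,}$); part (a) is proved identically. For part (b) the paper first proves \eqref{eq:PolyaSzego} by a pointwise comparison of the integrand at $t$ with the two endpoints (Lemma~\ref{Lem:Dirichlet}, using convexity of $\sqrt{\tilde a}$ and of $(A,P)\mapsto A^2/P$), and proves (b.3) separately by regularizing $a$ to $C^2$ and computing $h''(t)\ge 0$ under the integral (Lemma~\ref{Lem:Convex_energy}); Remark~\ref{Rmk:26} then observes that (b.1)--(b.2) can be rederived from convexity of $h$ together with $h(0)=h(1)$. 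Your route is precisely that alternative, with the genuine improvement that convexity in $t$ is obtained pointwise in $\lambda$ from the quadratic-over-linear (perspective) lemma applied to $\phi=g(\xi^t)$, $r=(\xi^t)'$, so no regularization of $a$ is needed for (b.3). Your equality analysis, however, is more roundabout than necessary: the condition ``$g(\xi^t)/(\xi^t)'$ constant in $t$'' already reads, back in the original variables, $\rho'(\lambda)=\rho'(-\lambda)$ (since $g(\xi^t)/(\xi^t)'=1/(\rho^t)'$), and together with $\rho(m)=-\rho(-m)=L$ this gives $\rho$ odd at once --- this is the paper's $\Psi$-argument, with $\Psi=\rho-\rho_\star$ in disguise. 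The affineness of $g$ on segments, the ODE for $\delta$, the component decomposition, and the special case invoking evenness of $g$ are not wrong, but they are superfluous.

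The one genuine soft spot is your closing claim that the general increasing $C^1$ case follows ``by approximation''. That is fine for the inequalities (a), (b.1), (b.3) (indeed your pointwise argument goes through directly, interpreting the integrand as $0$ on the null set of critical values), but (b.2) is a rigidity statement and does not pass to limits. What the final step really uses --- in your ODE/Gronwall argument as much as in the paper's ``$\Psi'=0$ hence $\Psi\equiv\Psi(m)$'' --- is that $\rho-\rho_\star$ is absolutely continuous, which holds exactly when $\{v'=0\}$ is Lebesgue-null. If $v'$ is allowed to vanish on a set of positive measure (a fat Cantor set, say), $\rho$ acquires a singular part and ``$\rho'(\lambda)=\rho'(-\lambda)$ a.e.\ plus matching endpoints'' no longer forces $\rho=\rho_\star$; one can even arrange $a\equiv 1$ and $v'\circ v^{-1}$ even while the flat set is distributed asymmetrically, so that equality in \eqref{eq:PolyaSzego} holds for every $t$ with $v$ not odd. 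So for (b.2) you should assume $v'>0$ (or at least $|\{v'=0\}|=0$) outright, or argue there directly, rather than defer to approximation; the paper's own proof is equally terse at this point, so this is a caveat on the statement's degenerate case rather than a defect specific to your argument.
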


Part $(a)$ will follow from the definition of continuous odd rearrangement. To prove parts
$(b.1)$ and $(b.2)$, we use the coarea formula to obtain
$$
   \int_{-L}^L  \left( \frac{d v^t}{dx } \right)^2 \! a(x) \, dx
   = \int_0^m \left\{\frac{a(\rho^t(\lambda))}{(\rho^t)'(\lambda)}
   +
   \frac{a(\rho^t(-\lambda))}{(\rho^t)'(-\lambda)}\right\}\ d\lambda,
$$
and then we compare the integrand in the second integral for $t\in(0,1)$ and
for $t=1$ using the log-convexity of $a$. Finally, part $(b.3)$ will follow
from $(a)$ and from differentiating twice the function $\mathcal{E}(v^t,(-L,L))$
and using that $a$ is log-convex, after regularizing $a$.

Theorem~\ref{thm:OddRearrangement} allows us to prove the following extension (in the one-dimensional case
and once we know that the solution is increasing)
of Berestycki and Nirenberg's result on antisymmetry (Theorem~\ref{thm:BerestyckiNirenberg}).
Our proof uses a completely different technique (rearrangement) than theirs
(the sliding method). Under the same hypothesis on the weight $a$, our method leads to antisymmetry
for increasing solutions not only for locally Lipschitz nonlinearities $f=-G'$ but also for discontinuous ones, since we only require $G$ to be locally Lipschitz.

\begin{Theorem}\label{thm:Brahms1}
Assume that $a\equiv b \in C^0([-L,L])$ and $G\in C^{0,1}_{\rm loc}(\mathbb{R})$ are even
functions and that $a>0$ in $[-L,L]$. Note that $f=-G'$ could be discontinuous.

Let $m >0$ and let $u \in H^1_m ((-L,L))$ be an increasing critical point of the
functional $\mathcal{E} (\cdot, (-L,L))$.
If $a$ is {\rm log}-convex, then $u$ is odd.
\end{Theorem}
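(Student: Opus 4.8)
The plan is to obtain Theorem~\ref{thm:Brahms1} as a fairly direct consequence of the continuous odd rearrangement result, Theorem~\ref{thm:OddRearrangement}, applied to $v=u$. First I would upgrade the regularity of $u$: a critical point $u\in H^1_m((-L,L))$ of $\mathcal E(\cdot,(-L,L))$ is a weak solution of $-(au')'=af(u)$, and since $u$ is bounded and $G\in C^{0,1}_{\rm loc}$, the right-hand side $af(u)$ lies in $L^\infty$; hence $au'$ is Lipschitz, and as $a\in C^0$ is strictly positive, $u'=(au')/a$ is continuous, i.e.\ $u\in C^1_m([-L,L])$. Since $u$ is moreover increasing, Theorem~\ref{thm:OddRearrangement} applies with $v=u$ and $b\equiv a$.

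Next I would study the energy along the rearrangement family. Let $\{u^t\}_{t\in[0,1]}$ be the continuous odd rearrangement of $u$ (with respect to $b\equiv a$), so that $u^1=u$, $u^0=u_\star$ is the flipped of $u$, $u^{1/2}$ is odd, and each $u^t$ belongs to $C^1_m\subset H^1_m$. Put $\phi(t):=\mathcal E(u^t,(-L,L))$. By Theorem~\ref{thm:OddRearrangement}(b.3), $\phi$ is convex on $[0,1]$; by Theorem~\ref{thm:OddRearrangement}(a) (the equidistribution identity \eqref{equidistributed}), the potential part $\int_{-L}^{L}G(u^t)\,a\,dx$ does not depend on $t$, so $\phi(t)=\tfrac12\int_{-L}^{L}((u^t)')^2a\,dx+C$ for a constant $C$. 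Finally, since $a$ and $G$ are even, $\mathcal E(u_\star)=\mathcal E(u)$, hence $\phi(0)=\phi(1)$.

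Then I would invoke criticality to kill the slope of $\phi$ at the endpoint $t=1$. Since every $u^t$ has the same boundary values $\pm m$, the curve $t\mapsto u^t$ moves inside $H^1_m$, so its left velocity $w:=\tfrac{d}{dt}\big|_{t=1^-}u^t$ (once its existence in $H^1$ is established) lies in $H^1_0((-L,L))$. Because the potential contribution to $\phi$ is already known to be constant in $t$, one only differentiates the quadratic kinetic term along the curve, which is smooth; thus $\phi'(1^-)=\int_{-L}^{L}u'w'a\,dx$, and this equals $\int_{-L}^L f(u)\,w\,a\,dx$ by the weak Euler--Lagrange equation for $u$ tested against $w$, while this last integral equals $-\tfrac{d}{dt}\big|_{t=1^-}\int_{-L}^L G(u^t)a\,dx=0$ since $t\mapsto\int_{-L}^L G(u^t)a\,dx$ is constant. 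Hence $\phi'(1^-)=0$. Now a convex function $\phi$ on $[0,1]$ with $\phi(0)=\phi(1)$ and $\phi'(1^-)=0$ must be constant: convexity makes $\phi'$ nondecreasing, so $\phi'\le\phi'(1^-)=0$ on $(0,1)$, whence $\phi$ is nonincreasing and, having equal endpoint values, constant. Therefore $\int_{-L}^{L}((u^{1/2})')^2a\,dx=\int_{-L}^{L}(u')^2a\,dx$, i.e.\ equality holds in \eqref{eq:PolyaSzego} at $t=1/2\in(0,1)$; by Theorem~\ref{thm:OddRearrangement}(b.2) this forces $u^{1/2}=u$, so $u$ is odd.

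The main obstacle is the third step, namely making rigorous the left-differentiability of the rearrangement curve $t\mapsto u^t$ at $t=1$ as a map into $H^1$, and the identification of $\phi'(1^-)$ with the first variation of $\mathcal E$ along an admissible variation in $H^1_0$. Here one should exploit that the defining profile $\rho^t=B^{-1}\!\big(tB\circ\rho+(1-t)B\circ\rho_\star\big)$ is \emph{affine} in $t$, which gives good control on $u^t=(\rho^t)^{-1}$ near $t=1$. One must also be careful that $f=-G'$ need only be defined almost everywhere, so that the chain rule used on the (constant) potential term is justified; this is handled by observing that, on the set where the rearrangement actually displaces points, either $u'>0$ there ---so that $u$ is locally bi-Lipschitz and pulls back null sets of non-differentiability of $G$ to null sets--- or $u$ is locally constant, in which case the Euler--Lagrange equation forces that constant value to be a zero of $G'$, where $G$ is in particular differentiable.
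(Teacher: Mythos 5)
Your proposal is correct and follows essentially the same route as the paper's own proof: upgrade $u$ to $C^1_m([-L,L])$, run the continuous odd rearrangement $u^t$, use the equidistribution of the potential term together with the convexity of the energy along $t$ (Theorem~\ref{thm:OddRearrangement}, i.e.\ Lemmas~\ref{Lem:Dirichlet} and~\ref{Lem:Convex_energy}), kill the slope at $t=1$ by criticality, and conclude oddness from the equality case of \eqref{eq:PolyaSzego}. The delicate point you flag ---one-sided differentiability of $t\mapsto u^t$ at $t=1$ and $\phi'(1^-)=0$--- is exactly the step the paper also treats briefly (via the $C^1$-convergence $u^t\to u$ and the constancy of the potential energy), and the paper phrases the conclusion as a contradiction with the dichotomy of Lemma~\ref{Lem:Dirichlet} rather than via your ``convex, equal endpoint values, zero end slope, hence constant'' argument, which is logically equivalent.
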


Note that we assume that the critical point $u$ is increasing. For a {\rm log}-convex
weight $a\equiv b\in C^1([-L,L])$ and $G\in C^{1,1}_{\rm loc}(\mathbb{R})$, this automatically holds if either
$-m=u(-L)\leq u\leq u(L)=m$ in $(-L,L)$ or if
$G(-m)=G(m)\leq G$ in $\mathbb{R}$. That the first assumption
suffices is a consequence of Theorem~\ref{thm:BerestyckiNirenberg}, while the
sufficiency of the second one
---without requiring any \textit{a priori} estimate on the solution---follows from
Theorem~\ref{thm:Increasing}~(i) below. Note that the Allen-Cahn
potential $G (u) = (1 - u^2)^2/4$ with $m=1$ satisfies this last assumption.

\begin{Remark}\label{rem:OddRearrangement}
We will prove the statements in Theorems~\ref{thm:OddRearrangement} and
\ref{thm:Brahms1}, as well as the one stated in
Theorem~\ref{thm:BerestyckiNirenberg}, for general weights
$a$ and $b$ (not necessarily equal) ---see the beginning of the proof of
Theorem~\ref{thm:OddRearrangement} in Section~\ref{subsec2:2}. In particular,
they hold assuming that
\begin{equation}\label{eq:BerNiren}
\frac{\big( \sqrt{a b } \, \big)'}{b}\quad \textrm{is nondecreasing in }(-L,L)
\end{equation}
instead of the log-convexity of $a\equiv b$ (assuming $a,b\in C^1([-L,L])$). For this, see \eqref{newexp}
in the beginning of Section~\ref{subsec2:2}. Note that assumption \eqref{eq:BerNiren}
for $b\equiv1$ is equivalent to require that $\sqrt{a}$ is a convex function.
\end{Remark}

Property~\eqref{eq:PolyaSzego} has been first proved for the Steiner
or Schwarz symmetrization of a function with zero Dirichlet boundary data and
$a \equiv 1$ by P\'olya-Szeg\"o \cite{PolyaSzego} (see also \cite{Kawohl}).
Later, their well known result has been studied for non-constant
weights $a$ by several authors; see \cite{BBMP1999, BBMP2008, Brock1999,
Brock2000, EspositoTrombetta} among others.
In the higher dimensional case, Esposito and Trombetti~\cite{EspositoTrombetta} proved
that the functional
\begin{equation}\label{Trombetti:fct}
\int_{\mathbb{R}^n} \left\{\frac{1}{2}\tilde{a}(x')|\nabla_{x'} u|^2
+\frac{1}{2}a(x_N)u_{x_N}^2+\tilde{b}(x')G(u)\right\}\,dx,
\end{equation}
where $x=(x',x_N)\in\mathbb{R}^{N-1}\times\mathbb{R}$,
is decreased under Steiner symmetrization of functions $u$ with compact support
when $\sqrt a$ is strictly convex. Moreover, they proved that minimizers are Steiner
symmetric under this assumption.
Note that in the 1-dimensional case ($N=1$) this assumption coincides with the one in
Remark~\ref{rem:OddRearrangement}. Hence, our Theorem~\ref{thm:OddRearrangement}
shows that the same properties hold for our continuous odd rearrangement under the same
assumption as theirs.

Theorem~\ref{thm:OddRearrangement} may be easily extended to the $N$-dimensional
case (though we do not write the details in this paper). The result asserts that the
functional \eqref{Trombetti:fct} is decreased under the continuous odd rearrangement
with respect to the $x_N$ variable whenever $\sqrt{a(x_N)}$ is convex in $x_N$, $\tilde{b}\equiv 1$,
and $\tilde{a}$ is nonnegative.

For another rearrangement, the monotone decreasing one, Landes~\cite{Landes} shows
that~\eqref{eq:PolyaSzego} holds whenever $a$ is nonnegative and nondecreasing
(this result does not require any convexity assumption on $a$).

When $a\equiv b$, the log-convexity assumption in Theorem~\ref{thm:OddRearrangement}
also appears in a different (but related) context. In \cite{RCBM}, Rosales, Ca\~nete, Bayle,
and Morgan study the subsets $E$ of $\mathbb{R}$ (with given weighted volume $\int_Ea $)
which minimize the weighted perimeter $\int_{\partial E}a$. In Corollary~4.12 of \cite{RCBM}
they show that if $a$ is an even and strictly log-convex weight, then intervals centered at $0$
are the unique minimizers. In Section~\ref{section1:2} we will mention another result
of \cite{RCBM} in higher dimensions which is related to one of our results in dimensions
$N\geq 2$.

Paper \cite{RCBM} motivated the so called ``log-convex density conjecture'',
first stated by Kenneth Brakke, as follows. In $\mathbb{R}^N$, with a smooth,
radial, log-convex density, balls around the origin provide isoperimetric
regions of any given volume. The conjecture
has been recently proven by Gregory R. Chambers~\cite{Chambers}.

Odd symmetry of solutions may not hold without the previous monotonicity
or convexity-type assumptions on the weights.
In fact, for some weights and for the potential $G(u)=(1-u^2)^2/4$, we will
prove the existence of non-odd minimizers which are increasing from
$-1$ to $1$. Indeed, by considering the space of antisymmetric functions
$$
   H^{as}_m ((-L,L)) := \{ u \in H^1_m ((-L,L)) \, \colon \, u(x) = - u (-x) \}
   \,,
$$
we will provide sufficient conditions on the weights $a$ and $b$
for which
\begin{equation}\label{minleqmin}
   \min_{u \in H^{as}_m ((-L,L))} \mathcal{E} (u, (-L,L)) >
   \min_{u \in H^{1}_m ((-L,L))} \mathcal{E} (u, (-L,L))
\end{equation}
when $L$ is large enough.
Note that if this holds, then minimizers in $H^{1}_m ((-L,L))$
are not antisymmetric. The following conditions on the weights $a$ and $b$
guarantee \eqref{minleqmin} and therefore non-oddness of minimizers.
%
\begin{Proposition}\label{cor:characteriation}
Assume  that $a$, $b$, and $G$ are even $C^0(\mathbb{R})$ functions and that $a,b>0$. Let $m>0$
and suppose that $G(s) \geq G(m)$ for all $s \in
\mathbb R$ and $G(s)>G(m)$ for all $s\in(-m,m)$.

If there exists a sequence of bounded intervals $J_n
\subset \mathbb R$ satisfying
\begin{equation}\label{eq:Characterization}
  \int_{J_n}\frac{1}{a}\rightarrow +\infty
  \quad\textrm{and}\quad
  \int_{J_n} b \rightarrow 0\, ,
\end{equation}
then there exists $L_0>0$ such that $\mathcal{E} (\cdot,I)$ has no odd minimizers
on any interval $I:=(-L,L)$ with $L>L_0$.
\end{Proposition}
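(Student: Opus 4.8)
The plan is to show that under hypothesis \eqref{eq:Characterization} the antisymmetric minimal energy is strictly larger than the unconstrained one for all large $L$, by constructing a competitor in $H^1_m((-L,L))$ whose energy is close to the "minimal possible" value $2LG(m)\cdot(\text{something})$—more precisely, close to the energy of the constant-like configuration $u\equiv m$—while showing that any odd function must pay a definite positive price.

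First I would compute a universal lower bound for the antisymmetric problem. If $u\in H^{as}_m((-L,L))$, then $u(0)=0$, so $u$ must pass through the whole interval $(-m,m)$ of values where $G(s)>G(m)$. Using the one-dimensional bound $\mathcal{E}(u,(-L,L))\geq \mathcal{E}(u,(0,L))+\mathcal{E}(u,(-L,0))$ together with the standard estimate $\int (u')^2 a/2 + \int (G(u)-G(m)) b \geq$ (something controlled below), I want to bound from below
$$
\min_{H^{as}_m}\mathcal{E}(\cdot,(-L,L)) - 2\int_{-L}^{L} G(m)\,b\,\frac{dx}{2}\cdot\!\!\quad
$$
Actually the cleaner route: subtract the constant $\int_{-L}^L G(m)b\,dx$ from $\mathcal{E}$ and set $\Phi(u):=\mathcal{E}(u,(-L,L))-\int_{-L}^L G(m)b\,dx=\int_{-L}^L\{\tfrac12(u')^2a+(G(u)-G(m))b\}\,dx\geq 0$. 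For the unconstrained problem I would build a competitor $u_\varepsilon$ that equals $m$ outside a neighbourhood of a well-chosen interval $J_n$ and transitions rapidly from $-m$ to $m$ (or more simply from some value near $m$, exploiting that $u$ only needs $u(-L)=-m$) — wait, the boundary condition forces $u(-L)=-m$. So the competitor must still go from $-m$ to $m$. Here is where \eqref{eq:Characterization} enters: place the transition inside $J_n$, where $\int_{J_n}1/a\to\infty$ lets one make $\int (u')^2 a/2$ small (spread the transition over a region where $a$ is tiny, using a change of variables so that $\int_{J_n}(u')^2a\,dx$ behaves like $(2m)^2/\int_{J_n}1/a\to 0$), while $\int_{J_n}b\to 0$ makes the potential cost $\int_{J_n}(G(u)-G(m))b\,dx$ small since $G(u)-G(m)$ is bounded on $[-m,m]$. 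Outside $J_n$ set $u\equiv -m$ on one side and $u\equiv m$ on the other; there the potential contributes $0$. Hence $\Phi(u_\varepsilon)\to 0$ as $n\to\infty$, so $\inf_{H^1_m}\Phi=0$.

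For the antisymmetric lower bound I would argue: any $u\in H^{as}_m$ has $u(0)=0$ and, being increasing-valued-through is not guaranteed, but $u$ is continuous and $u(0)=0$, $u(L)=m$, so on $(0,L)$ the image of $u$ contains $[0,m)$; combined with the evenness of $G$ and the fact $u(-x)=-u(x)$, a coarea/Cauchy-Schwarz argument gives a strictly positive lower bound $c_0>0$ for $\Phi(u)$ that does \emph{not} depend on $L$. Concretely, since $a,b>0$ are continuous and even, on each compact interval they are bounded below by positive constants; but $L$ varies, so I instead localize: by continuity pick $\delta>0$ and $r>0$ with $a\leq \bar a$, $b\geq \underline b>0$ on $[-r,r]$ and use that on any subinterval of $(-r,r)$ around $0$ the function passes through a fixed band of values, say from $-m/2$ to $m/2$, where $G-G(m)\geq \eta>0$. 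Either the transition through this band happens within $[-r,r]$, costing at least $2\int_{[-m/2,m/2]}\sqrt{2\underline b\,\eta}\;d\lambda>0$ by the usual $\int(u')^2a/2+(G-G(m))b\geq \sqrt{2a b(G-G(m))}\,|u'|$ pointwise inequality and integrating, or the transition leaves $[-r,r]$ in which case there is a subinterval where $|u'|$ is small but it is long, and one uses $\int (u')^2 a \geq (\text{total variation})^2/\int 1/a$ on bounded pieces — here I would need $a$ bounded above on the relevant (fixed-size, since near $0$) region. This yields $\inf_{H^{as}_m}\Phi\geq c_0>0$ uniformly in $L>1$, say.

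Combining: for $L$ large, $\inf_{H^1_m}\Phi<c_0/2<c_0\leq\inf_{H^{as}_m}\Phi$, which is exactly \eqref{minleqmin}; existence of minimizers in $H^1_m$ (by the direct method, using $a,b>0$ continuous on the compact $[-L,L]$ hence bounded below, and weak lower semicontinuity) then shows these minimizers are not odd, i.e.\ $\mathcal{E}(\cdot,(-L,L))$ has no odd minimizer. Setting $L_0$ as the threshold above completes the proof. The main obstacle is the uniform-in-$L$ positive lower bound for the antisymmetric problem: one must be careful that, although the interval grows, the obstruction is \emph{localized near the origin} where $u$ is forced to pass through $0$, so the relevant constants ($\bar a$, $\underline b$, $\eta$ from a fixed neighbourhood $[-r,r]$ of $0$) are genuinely independent of $L$; the pointwise inequality $\tfrac12 a p^2 + b(G(q)-G(m))\geq |p|\sqrt{2ab(G(q)-G(m))}$ integrated over the sub-level passage, together with handling the case where the transition spills outside $[-r,r]$ via a Cauchy–Schwarz/total-variation estimate, is the technical heart.
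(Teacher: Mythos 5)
Your overall strategy coincides with the paper's: an $L$-independent positive lower bound for the energy of odd (antisymmetric) competitors, combined with an upper bound for the unconstrained infimum that tends to zero by placing the $-m\to m$ transition inside the intervals $J_n$ of \eqref{eq:Characterization}. Your competitor construction (transition across $J_n$ with kinetic cost $\sim (2m)^2/\int_{J_n}1/a$, constants $\pm m$ outside, potential cost $\leq \sup_{[-m,m]}(G-G(m))\int_{J_n}b$) is exactly the content of Lemma~\ref{Lemma3:5} and Propositions~\ref{prop:XiMen} and \ref{infimo=0}, and since the same competitor works on every $(-L,L)\supset J_n$, you recover what the paper gets via the monotonicity of $\Phi_m$ in $L$.

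There is, however, a genuine flaw in the second case of your lower bound for odd functions. If $u$ does not traverse the band $[-m/2,m/2]$ inside the fixed interval $[-r,r]$, the transition region is \emph{not} of fixed size near the origin: it can occupy essentially all of $(-L,L)$, so the total-variation estimate $\int a(u')^2\geq(\mathrm{TV})^2/\int 1/a$ only yields a quantity of order $m^2/\int 1/a$ over that long region, which tends to $0$ as $L\to\infty$ (already for $a\equiv1$). As written, this case does not produce an $L$-independent constant. The fix is immediate and already sits in your setup: in that case $|u|<m/2$ on all of $[-r,r]$ (by oddness and continuity, if $u$ attained $\pm m/2$ in $[-r,r]$ it would traverse the whole band there), so the potential term alone gives $\int_{-r}^r b\,\big(G(u)-G(m)\big)\,dx\geq 2r\,\underline{b}\,\eta>0$. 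This is precisely how the paper's Proposition~\ref{acotadas} treats both regimes simultaneously: with $\beta$ the first point where $|u|=m_0$, the energy is at least $m_0^2/\!\int_0^\beta 1/a \,+\, 2G_0\!\int_0^\beta b$, whose infimum over $\beta\in(0,\infty)$ is positive because the kinetic term blows up as $\beta\to0$ while the potential term stays bounded away from zero as $\beta\to\infty$. Two minor points in your first case: the Modica-type inequality requires a lower bound on $a$ as well, giving the constant $m\sqrt{2\,\underline{a}\,\underline{b}\,\eta}$ rather than $\sqrt{2\,\underline{b}\,\eta}$ (harmless, since $a>0$ is continuous on $[-r,r]$), and one should extract a minimal traversal subinterval on which $u$ stays in the band before integrating.
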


Note that this result applies to the Allen-Cahn potential and boundary
values~$\pm m=\pm1$.

In order to prove Proposition~\ref{cor:characteriation}, we can assume without
loss of generality that $G(m)=0$ by replacing $G$ by $G-G(m)$ if necessary. We will
first see that the infimum of the functional $\mathcal{E} (\cdot,I)$,
in the class of odd functions $H^{as}_m (I)$, is bounded from below by a positive
constant which is independent of the interval. Next, in Proposition~\ref{infimo=0} we prove that
condition~\eqref{eq:Characterization} is equivalent to the fact that
$$
    \min_{u \in H^1_m (I) } \mathcal{E} (u, I) \to 0
    \qquad
    \hbox{ as }  L \to \infty.
$$

\begin{Remark}\label{notunique}
Under the assumptions of Proposition~\ref{cor:characteriation},
we deduce that on any interval $I:= (-L,L)$ with $L > L_0$ the
functional $\mathcal{E} (\cdot, I)$ admits at least three critical
points:
\begin{enumerate}
\item[(i)]
Two minimizers: $u$ and its flipped $u_\star(x)=-u(-x)$ (which are different since $u$ is not antisymmetric).
\item[(ii)]
A critical point $u_{as}$ which is antisymmetric. It is obtained by minimizing the functional
$\mathcal{E} (\cdot, I)$ in the space $H^{as}_m (I)$.
\end{enumerate}
\end{Remark}

\begin{Example}\label{ex:nonodd}
Let us exhibit a simple class of weights for which critical points in large enough intervals
are not odd. Assume that $0<a,b\in C^0(\mathbb{R})$ and that $0\leq G\in C^1(\mathbb{R})$
are even functions, $a\in L^\infty (\mathbb{R})$, and
$\lim_{x\to+\infty}b(x)=0$.
Under these assumptions, take any sequence $x_n\to+\infty$ such that
$b(x)\leq 1/n^2$  for all $x>x_n$. Then, we have
$$
  \int_{x_n}^{x_n+n} \frac{1}{a} \, \geq \, \frac{n}{\Vert a\Vert_\infty}\to +\infty
  \quad \hbox{ and } \quad
  \int_{x_n}^{x_n+n} b
  \, \leq \,  n\frac{1}{n^2}\to 0.
$$
Thus, condition~\eqref{eq:Characterization} is satisfied. Therefore, by
Proposition~\ref{cor:characteriation}, there exists $L_0>0$ such that $\mathcal{E}$
has no odd minimizers on $(-L,L)$ whenever $L>L_0$.
\end{Example}

A related, but different, question is the existence of non-odd minimizers
in $H^1_m((-L,L))$ which are not increasing. For $G(u) = (1-u^2)^2/4$ this cannot
happen if $u(\pm L)=\pm 1$ (in this case any minimizer is increasing), but
it may occur if $u(\pm L)=\pm\varepsilon$ with $\varepsilon$ small and $L$
large. In Proposition~\ref{prop:NonOdd} below we prove the existence of such
non-odd minimizers of $\mathcal{E}$ for a large family of weights, which includes the
unweighted case $a\equiv b\equiv 1$. This result will be proved using a
perturbation argument from the case $m=0$ (see Figure \ref{fig2}).

\begin{figure}[ht]
\begin{center}
\psset{xunit=6cm,yunit=3cm}
\begin{pspicture}(0,1.2)
\psaxes[linestyle=dashed,axesstyle=frame,ticks=none]{->}(0,0)(-1,-1)(1,1)
\psline[linewidth=0.5pt]{->}(-1.1,0)(1.1,0)
\psline[linewidth=0.5pt]{->}(0,-1.1)(0,1.1)
\psline[linecolor=blue,linestyle=dashed,linewidth=0.5pt](-1,-0.2)(0,-0.2)
\psline[linecolor=blue,linestyle=dashed,linewidth=0.5pt](0,0.2)(1,0.2)
\pscurve[linecolor=red,linewidth=0.5pt]{*-*}(-1,0)(0,0.8)(1,0)
\pscurve[linecolor=blue,linewidth=0.5pt]{*-*}(-1,-0.2)(0,0.6)(1,0.2)
\rput(-0.58,0.55){$u_0$}
\rput(-0.4,0.25){$u_\varepsilon$}
\rput(1.15,0){$x$}
\rput(0,1.15){$\lambda$}
\rput[linecolor=blue](0.01,-0.2){$-\varepsilon$}
\rput[linecolor=blue](-0.05,0.2){$\varepsilon$}
\rput(1.05,-0.1){$L_0$}
\rput(-1.08,-0.1){$-L_0$}
\end{pspicture}
\end{center}
\vspace{2.5cm}
  \caption{Minimizers for $m=0$ and $m=\varepsilon$: $u_0$ and $u_\varepsilon$}
  \label{fig2}
\end{figure}
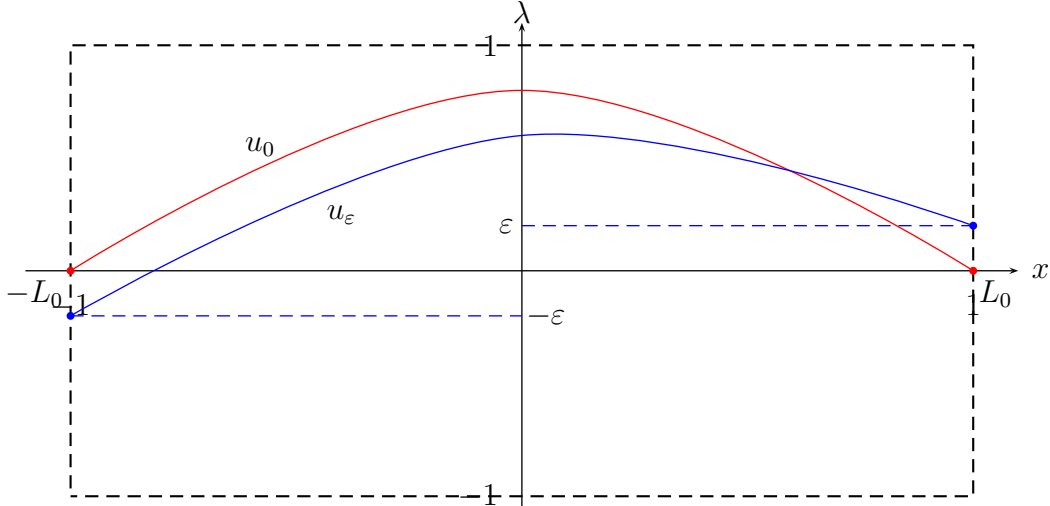

Another contribution of our paper is to provide conditions on the
weights $a$, $b$, and on the potential $G$ which guarantee the monotonicity
of solutions for the one-dimensional problem~\eqref{E-L},
without relying on the {\it a priori} bound~\eqref{apriori} used
in~\cite{BeresNiren}. Recall that in Theorem~\ref{thm:Brahms2} we already
gave conditions to guarantee uniqueness and monotonicity of
solutions. Our following result guarantees monotonicity under more
general conditions on $a$, $b$, and $G$. Here $a$, $b$, and $G$ are not
assumed to be even. In the even case, we would take $x_0=0$ in the
following condition \eqref{eq:Muffin}.
%
\begin{Theorem} \label{thm:Increasing}
Let $a,b \in C^1 ([-L,L])$ such that $a,b>0$, and $G\in C^{1,1}_{\rm loc}(\mathbb{R})$.
Assume that there exists $x_0 \in [-L,L]$ such that
\begin{equation} \label{eq:Muffin}
  (ab)' \leq 0
  \, \hbox{ in } (-L,x_0]
  \quad \hbox { and } \quad
  (ab)' \geq 0
  \, \hbox{ in } [x_0,L).
\end{equation}
Then, any solution to \eqref{E-L} with $m>0$ is increasing if
either
\begin{enumerate}
\item[{\rm (i)}]
$G\geq G(-m)=G(m)$ in $\mathbb{R}$;
\end{enumerate}
or
\begin{enumerate}
\item[{\rm (ii)}]
For some $M\in(0,m]$ the function $G$ satisfies
\begin{equation} \label{eq:Double:bis}
\begin{array}{l}
  G\geq G(-M)=G(M)\textrm{ in }[-M,M], \quad
  G' \leq 0 \textrm{ in } (-\infty,- M),
\\
  \textrm{and}\quad
  G' \geq 0 \textrm{ in } (M,+\infty).
\end{array}
\end{equation}
\end{enumerate}
\end{Theorem}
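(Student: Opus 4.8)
The plan is to combine the Hamiltonian $\mathcal H$ of \eqref{L:intro} with a Cauchy--Lipschitz uniqueness argument. Write $f=-G'$; since $G\in C^{1,1}_{\rm loc}$, $f$ is locally Lipschitz and every solution $u$ of \eqref{E-L} lies in $C^2([-L,L])$, because $(au')'=-bf(u)\in C^0$. The key elementary observation --- call it the \emph{rigidity lemma} --- is that if $u$ solves \eqref{E-L} and $u'(\bar x)=0$ and $f(u(\bar x))=0$ for some $\bar x\in[-L,L]$, then $u\equiv u(\bar x)$, which is impossible since $u(-L)=-m\neq m=u(L)$. Indeed, written as $u''=-(a'/a)u'-(b/a)f(u)$, the equation has a right-hand side continuous in $x$ and locally Lipschitz in $(u,u')$, and the constant $u(\bar x)$ solves it with the same Cauchy data at $\bar x$, so uniqueness forces $u\equiv u(\bar x)$. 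Consequently it is enough to show that $u'$ never vanishes on $(-L,L)$: then $u'$ has a constant sign, necessarily positive because $u(L)>u(-L)$, so $u$ is increasing.

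Next I would exploit the identity for $h(x):=\mathcal H(x,u(x),u'(x))=\tfrac12(au')^2-ab\,G(u)$, namely $h\in C^1$ and (using the equation) $h'=-(ab)'\,G(u)$ on $[-L,L]$. Replacing $G$ by $G$ minus a constant (which changes neither \eqref{E-L} nor the conclusion), I normalize $\min_{\mathbb R}G=0$: this is legitimate since under (i) we have $G\ge G(m)=G(-m)$ on $\mathbb R$, whereas under (ii), \eqref{eq:Double:bis} gives $G\ge G(M)=G(-M)$ on $[-M,M]$ and, by the monotonicity of $G$ on $(-\infty,-M)$ and on $(M,\infty)$, the same bound on all of $\mathbb R$; in both cases $\pm m$ (resp. $\pm M$) are global minima of $G$, so $f(\pm m)=0$ (resp. $f(\pm M)=0$). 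With $G\ge0$, assumption \eqref{eq:Muffin} makes $h$ nondecreasing on $(-L,x_0]$ and nonincreasing on $[x_0,L)$. Hence, on any subinterval $[c,d]\subseteq[-L,L]$ one has $h\ge\min\{h(c),h(d)\}$, and \emph{if in addition} $G(u(c))=G(u(d))=0$, then $h(c)=\tfrac12(a(c)u'(c))^2\ge0$ and likewise $h(d)\ge0$, so $h\ge0$ on $[c,d]$. On such an interval, at a point $\bar x$ where $u'(\bar x)=0$ we would get $0\le h(\bar x)=-a(\bar x)b(\bar x)G(u(\bar x))$, forcing $G(u(\bar x))=0$; then $u(\bar x)$ is a global minimum of $G\in C^1$, hence $f(u(\bar x))=0$, contradicting the rigidity lemma. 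Thus $u'\neq0$ on every subinterval whose endpoints are zeros of $G\circ u$.

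Under hypothesis (i) we are done at once by taking $[c,d]=[-L,L]$, since $G(u(\pm L))=G(\pm m)=0$. Under hypothesis (ii), the sub-case $m=M$ reduces to (i), so I assume $m>M$; here $G(\pm m)$ may fail to vanish, and the remaining task is to trap $u$ inside $[-M,M]$ on a middle interval. Let $(\alpha,L]$ be the connected component of $\{u>M\}$ containing $L$ and $[-L,\gamma)$ the component of $\{u<-M\}$ containing $-L$; since $u(\mp L)=\mp m$, both are nonempty and proper, so $-L<\gamma<\alpha<L$ with $u(\gamma)=-M$ and $u(\alpha)=M$. On $\{u\ge M\}$ we have $f(u)=-G'(u)\le0$, hence $(au')'\ge0$, so $au'$ is nondecreasing on $[\alpha,L]$; as $u'(\alpha)\ge0$ and $u'(\alpha)\neq0$ (else the rigidity lemma with $f(M)=0$ applies), $au'\ge a(\alpha)u'(\alpha)>0$, i.e. $u'>0$ on $[\alpha,L]$, and symmetrically $u'>0$ on $[-L,\gamma]$. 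For any component of $\{u>M\}$ or $\{u<-M\}$ lying strictly inside $(-L,L)$, the same monotonicity of $au'$ together with the signs of $u'$ at its two endpoints (where $u=\pm M$) forces $u'$ to vanish at one of them, again contradicting the rigidity lemma; so $\{\,|u|>M\,\}=[-L,\gamma)\cup(\alpha,L]$ and $-M\le u\le M$ on $[\gamma,\alpha]$, with $G(u(\gamma))=G(u(\alpha))=0$. The argument of the preceding paragraph applied to $[c,d]=[\gamma,\alpha]$ then gives $u'\neq0$, hence $u'>0$, on $[\gamma,\alpha]$. Altogether $u'>0$ on $[-L,L]$.

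I expect the main obstacle to be precisely this component analysis in case (ii): showing that $\{u>M\}$ and $\{u<-M\}$ each consist of a single component meeting the boundary, so that $u$ is confined to $[-M,M]$ on $[\gamma,\alpha]$. It amounts to tracking the signs of $u'$ at the endpoints of each component against the monotonicity of $au'$ forced by the sign of $-G'(u)$ on $\{|u|\ge M\}$; once this is settled, case (i) and the central piece of case (ii) follow uniformly from the Hamiltonian.
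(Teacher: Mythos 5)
Your proof is correct, and its backbone --- the identity $\frac{d}{dx}\mathcal{H}(x,u,u')=-(ab)'G(u)$ from \eqref{eq:LaVita} combined with \eqref{eq:Muffin}, plus uniqueness for the Cauchy problem (your rigidity lemma) --- is exactly the paper's. For case (i) the two arguments are essentially identical: the paper gets $\mathcal{H}(\cdot,u,u')>0$ outright by noting $u'(\pm L)\neq 0$, whereas you settle for $\mathcal{H}\geq 0$ and kill a hypothetical interior zero of $u'$ by rigidity; a harmless rearrangement. For case (ii) you genuinely reorganize the argument: the paper first derives the a priori bound $|u|\leq m$ by the maximum principle, then argues by contradiction from a local maximum $x_1$ preceding a local minimum $x_2$ with $u(x_1)>u(x_2)$, pushes the critical values into $[-M,M]$, and sandwiches the points $x_1,x_2$ (where $\mathcal{H}\leq 0$) between level crossings $u(\bar x_1)=-M$, $u(\bar x_2)=M$ (where $\mathcal{H}>0$), contradicting the unimodal monotonicity of $\mathcal{H}\circ\varphi$. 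You instead analyze the components of $\{u>M\}$ and $\{u<-M\}$: the sign of $(au')'=-bf(u)$ there makes $au'$ monotone, which gives $u'>0$ on the two boundary components and excludes interior components (both endpoint derivatives would have to vanish, contradicting rigidity since $f(\pm M)=0$), and then you run the case-(i) Hamiltonian argument on the middle interval $[\gamma,\alpha]$, whose endpoints satisfy $G(u)=0$. Your route dispenses with the maximum-principle bound, yields the extra structural fact that each of $\{u>M\}$, $\{u<-M\}$ is a single interval touching the boundary, and makes the strict inequality $u'>0$ fully explicit (the paper's concluding step from ``no such extremum pair'' to strict monotonicity is left more implicit); the paper's version is shorter in the middle since it needs only one extremum pair rather than the component bookkeeping.
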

%

As an example, note that the Allen-Cahn potential $G (u) = (1 - u^2)^2/4$ with $M=1$ satisfies
assumption \eqref{eq:Double:bis}. In this particular case, Theorem~\ref{thm:Increasing}~(ii)
establishes that any solution is increasing if $m\geq 1$. Instead, as we said before,
in the case where $m=\varepsilon<1$ solutions which are not increasing do exist
(see Figure~\ref{fig2} and Proposition~\ref{prop:NonOdd}).

Note that, when $a$ and $b$ are even, the monotonicity condition \eqref{eq:Muffin} is
weaker than the convexity-type assumption \eqref{eq:BerNiren} (see
Remark~\ref{rem:MonAme}) and that we do not assume any {\it a priori}
bound on the solution.
As a consequence, if the weights $a$ and $b$ satisfy \eqref{eq:BerNiren}
then any solution $u$ to \eqref{E-L} is increasing under assumption (i)
or (ii) of Theorem~\ref{thm:Increasing}, and hence, the \textit{a priori}
estimate \eqref{apriori} automatically holds.

To prove Theorem~\ref{thm:Increasing} we use the ``Hamiltonian'' $\mathcal{H}$
defined in \eqref{L:intro} and a phase plane type analysis.

\subsection{The higher dimensional case}\label{section1:2}
In the remaining of the Introduction, we consider the extension of the
functional~\eqref{functional} to a $N$-dimensional domain. More
specifically, given a bounded domain $\Omega \subset \mathbb R^N$, a
$C^1$-map $A: \overline{\Omega} \to S_N (\mathbb R)$ with range in the set of
symmetric matrices and assumed to be uniformly coercive, a function $0<b \in C^1
(\overline{\Omega}, \mathbb R)$, and a potential $G\in C^2(\mathbb{R})$ satisfying
that
\begin{equation}\label{newG}
\begin{array}{l}
\textrm{there exists }M>0\textrm{ such that } G'(s)\leq 0\textrm{ for all }s< -M
\\
\textrm{and }G'(s)\geq 0\textrm{ for all }s>M,
\end{array}
\end{equation}
we consider the functional
\begin{equation}\label{eq:NDimFunctional}
   \mathcal{E} (u, \Omega)
   :=
   \int_{\Omega}
   \left\{  \frac{1}{2} \langle A(x) \nabla u,  \nabla u \rangle  + b(x) G(u) \right\} dx ,
   \quad
   u \in H^{1}_{\varphi} (\Omega) ,
\end{equation}
where $\varphi  \in (H^1\cap L^\infty)(\Omega)$ and
$$
  H^{1}_{\varphi} (\Omega) :=
  \left\{ u \in H^1 (\Omega) \, \colon \, u - \varphi \in H^1_0 (\Omega )
  \right\} .
$$
The Euler-Lagrange equation associated to \eqref{eq:NDimFunctional} is
given by
\begin{equation} \label{eq:Giessen}
   - \hbox{div} (A(x) \nabla u)  + b(x) G'(u) = 0,
   \quad
   u \in H^1_{\varphi} (\Omega).
\end{equation}

Recall that under quite restrictive assumptions on $A$ and $b$, a result
for the odd rearrangement in the $x_N$-variable
(which leads to antisymmetry) has been mentioned in \eqref{Trombetti:fct} and
the comments after it. This result requires assumptions on how $A$ and $b$ depend on the
variables $x=(x',x_N)$.

In the following (and without the previous restrictive assumptions),
we will prove several uniqueness results.
Setting
$$
  \lambda_1 (A,b, \Omega) :=
  \inf\left\{ \frac{\int_{\Omega} \langle A(x) \nabla \xi, \nabla \xi \rangle}
                   {\int_{\Omega} b(x) \xi^2}
       \, \colon \, \xi \in H^1_{0} (\Omega), \, \xi \not \equiv 0
  \right\} ,
$$
under the assumption that
\begin{equation} \label{SturmUndDrang}
    \lambda_1 (A,b,\Omega) \geq - G''(0) > - G''(s)
    \quad
    \textrm{for all } s \not = 0,
\end{equation}
simple arguments show that the functional $\mathcal{E} (\cdot, \Omega)$ has a unique critical
point in  $H^1_{\varphi} (\Omega)$ (see Proposition~\ref{Proposition:Marc1}).

For the double-well potential
$G(s) = \frac{1}{4}(1-s^2)^2$, namely the type of nonlinearity arising in the
De Giorgi conjecture discussed above, the condition $-G''(0) > -G''(s)$ for all $s\neq 0$
(as well as \eqref{newG})
is clearly satisfied. It is also easy to verify that $ \lambda_1 (A,b,\Omega) \geq - G''(0) $
holds for small domains $\Omega$.
Therefore it is of interest to find a class of weights for which this lower bound
is independent of the domain. A typical situation for which this holds is provided
by the weights $A(x) = e^{\alpha |x|^2} {\rm Id}$,
$b(x) = e^{\alpha|x|^2}$ with $\alpha$ large enough.
For a more general class of weights, we are able to give an explicit lower bound on $ \lambda_1 (A,b,\Omega)$
depending on $(A,b)$, which in the simplest case $A \equiv a\, {\rm Id}$ and $a \equiv b$
leads to the following uniqueness result:

\begin{Theorem} \label{thm:ndim}
Let $a\in C^2(\overline{\Omega})$, $G \in C^2 (\mathbb R)$ satisfying \eqref{newG}, and
$\varphi\in (H^1\cap L^\infty)(\Omega)$.
Assume $A \equiv a \, {\rm Id}$, $a\equiv b>0$ in $\overline\Omega$,
$$
  \frac{\Delta \sqrt{a}}{\sqrt{a}}  \geq - G''(0) \hbox{ in } \Omega,
  \quad \hbox{ and } \quad
  -G''(0) > -G''(s)
  \textrm{ for all } s \neq 0.
$$
Then $\mathcal{E} (\cdot, \Omega)$ admits a unique critical point in $H^1_\varphi(\Omega)$.
\end{Theorem}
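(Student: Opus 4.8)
The plan is to reduce Theorem~\ref{thm:ndim} to the abstract uniqueness criterion of Proposition~\ref{Proposition:Marc1}: that result asserts that $\mathcal{E}(\cdot,\Omega)$ has at most one critical point in $H^1_\varphi(\Omega)$ whenever
$$
\lambda_1(A,b,\Omega)\ \geq\ -G''(0)\ >\ -G''(s)\qquad\textrm{for all }s\neq 0 .
$$
The strict inequality $-G''(0)>-G''(s)$ is part of the hypotheses, so everything comes down to establishing the spectral bound $\lambda_1(a\,{\rm Id},a,\Omega)\geq -G''(0)$, that is,
$$
\int_\Omega a(x)\,|\nabla\xi|^2\,dx\ \geq\ -G''(0)\int_\Omega a(x)\,\xi^2\,dx\qquad\textrm{for every }\xi\in H^1_0(\Omega) .
$$

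The main step is a ground-state (Picone/Agmon-type) substitution. Fix $\xi\in H^1_0(\Omega)$ and set $\psi:=\sqrt a\,\xi$. Since $a\in C^2(\overline\Omega)$ and $a>0$ on the compact set $\overline\Omega$, the function $w:=\sqrt a$ lies in $C^2(\overline\Omega)$ and is bounded between two positive constants; hence $\psi\in H^1_0(\Omega)$, with $\xi=\psi/w$ and $a\,\xi^2=\psi^2$. Expanding $\nabla\xi=w^{-1}\nabla\psi-w^{-2}\psi\,\nabla w$ one finds
$$
a\,|\nabla\xi|^2=|\nabla\psi|^2-w^{-1}\nabla(\psi^2)\cdot\nabla w+w^{-2}\psi^2\,|\nabla w|^2 .
$$
Integrating over $\Omega$ and integrating the middle term by parts, using ${\rm div}(w^{-1}\nabla w)=-w^{-2}|\nabla w|^2+w^{-1}\Delta w$ and $\psi^2\in W^{1,1}_0(\Omega)$, the two $|\nabla w|^2$ terms cancel and one obtains the identity
$$
\int_\Omega a(x)\,|\nabla\xi|^2\,dx\ =\ \int_\Omega |\nabla\psi|^2\,dx\ +\ \int_\Omega \frac{\Delta\sqrt a}{\sqrt a}\,\psi^2\,dx .
$$

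From this the theorem follows at once. Discarding the nonnegative term $\int_\Omega|\nabla\psi|^2$ and invoking the hypothesis $\Delta\sqrt a/\sqrt a\geq -G''(0)$ in $\Omega$ gives
$$
\int_\Omega a\,|\nabla\xi|^2\,dx\ \geq\ -G''(0)\int_\Omega\psi^2\,dx\ =\ -G''(0)\int_\Omega a\,\xi^2\,dx ,
$$
where the last equality uses $a\equiv b$ (so $\psi^2=a\xi^2=b\xi^2$). Taking the infimum over $\xi\in H^1_0(\Omega)\setminus\{0\}$ yields $\lambda_1(a\,{\rm Id},a,\Omega)\geq -G''(0)$, and Proposition~\ref{Proposition:Marc1} then gives uniqueness of the critical point. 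The only point requiring care is the justification of the integration by parts for a general $\xi\in H^1_0(\Omega)$ rather than a smooth one; this is standard --- one first carries out the computation for $\xi\in C_c^\infty(\Omega)$ and then passes to the limit, using that both sides of the displayed identity are continuous in $\psi$ (equivalently in $\xi$) with respect to the $H^1$ norm. There is no genuine analytic obstacle; the whole content of the theorem is the identity above, which trades the weighted Dirichlet form for an unweighted one at the cost of the potential $\Delta\sqrt a/\sqrt a$.
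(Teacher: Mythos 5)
Your proposal is correct and follows essentially the same route as the paper: the paper proves the spectral bound $\lambda_1(a\,{\rm Id},a,\Omega)\geq\inf_\Omega\Delta\sqrt a/\sqrt a$ in Proposition~\ref{prop:Elizabeth} via exactly your substitution $\eta=b^{1/2}\xi$ (there stated for general $A$ and $b$, hence as an inequality rather than your exact identity), and then concludes through Corollary~\ref{Cor:unique_crt}, i.e.\ the convexity argument of Proposition~\ref{Proposition:Marc1}. The only difference is cosmetic: you work directly in the special case $A\equiv a\,{\rm Id}$, $a\equiv b$, where the computation closes to an identity, while the paper derives the general lower bound first and specializes afterwards.
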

%
As a consequence we obtain the following result.
Let $\sigma:\mathbb R^N \to \mathbb R^N$ be a reflection with respect
to a hyperplane. If
\begin{equation} \label{eq:Flughafen}
  G(s) = G(-s)\textrm{ for all }s\in\mathbb{R},
  \quad
  A \circ \sigma = A,
  \quad
  b \circ \sigma = b ,
  \quad
  \varphi \circ \sigma =  - \varphi,
\end{equation}
and $\sigma$ leaves $\Omega$ invariant (\textit{i.e.}, $\sigma(\Omega) = \Omega$),
then the critical points of $\mathcal{E} (\cdot, \Omega)$ in $H^1_\varphi(\Omega)$
inherit this same invariance:
%
\begin{Corollary} \label{cor:OddHigher}
Assume that $\sigma(\Omega)=\Omega$ and that condition \eqref{eq:Flughafen} holds
for some reflection $\sigma:\mathbb R^N \to \mathbb R^N$ with respect to a hyperplane.
Then, under the hypotheses of Theorem~{\rm \ref{thm:ndim}}, for every $\varphi\in (H^1\cap L^\infty)(\Omega)$
the functional $\mathcal{E} (\cdot, \Omega)$ in \eqref{eq:NDimFunctional}
admits a unique critical point $u$ in $H^1_\varphi(\Omega)$. In particular, $u$ is
antisymmetric, in the sense that $u \circ \sigma = - u$.
\end{Corollary}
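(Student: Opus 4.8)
The plan is to deduce Corollary~\ref{cor:OddHigher} from the uniqueness statement in Theorem~\ref{thm:ndim} together with the symmetry encoded in \eqref{eq:Flughafen}. Note first that \eqref{eq:Flughafen} forces $G$ to be even, so $G'$ is odd and hypothesis \eqref{newG} is self-consistent; thus the hypotheses of Theorem~\ref{thm:ndim} are in force, and that theorem already yields existence and uniqueness of a critical point $u\in H^1_\varphi(\Omega)$ of $\mathcal{E}(\cdot,\Omega)$. It then suffices to produce a second critical point out of $u$ by means of the symmetry and invoke uniqueness, so the whole proof is just careful bookkeeping.

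To this end I would consider the map $Tw:=-\,w\circ\sigma$. Since $\sigma$ is a reflection in a hyperplane, it is an affine isometry of $\mathbb{R}^N$ with $\sigma\circ\sigma=\mathrm{id}$ and constant derivative $D\sigma$ which is symmetric and orthogonal ($(D\sigma)^2=\mathrm{Id}$, $|\det D\sigma|=1$), and by assumption $\sigma(\Omega)=\Omega$. Hence $w\mapsto w\circ\sigma$ is a linear isomorphism of $H^1(\Omega)$ preserving $H^1_0(\Omega)$ and $L^\infty(\Omega)$. Using $\varphi\circ\sigma=-\varphi$ one checks that $Tw-\varphi=-(w-\varphi)\circ\sigma\in H^1_0(\Omega)$, so $T$ maps $H^1_\varphi(\Omega)$ into itself; since $T^2=\mathrm{id}$, it is a bijection of $H^1_\varphi(\Omega)$, acting affinely (it sends $\varphi+\zeta$ to $\varphi-\zeta\circ\sigma$). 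The next step is to verify that $T$ preserves the energy: for $v:=Tw$ the chain rule gives $\nabla v(x)=-D\sigma\,(\nabla w)(\sigma x)$, whence $|\nabla v(x)|^2=|(\nabla w)(\sigma x)|^2$ by orthogonality of $D\sigma$ and $\langle A(x)\nabla v,\nabla v\rangle=a(x)\,|(\nabla w)(\sigma x)|^2$ since $A\equiv a\,\mathrm{Id}$; moreover $G(v)=G(w\circ\sigma)$ because $G$ is even. Performing the measure-preserving change of variables $y=\sigma x$ and using $a\circ\sigma=a$ and $b\circ\sigma=b$ (consequences of $A\circ\sigma=A$ and $b\circ\sigma=b$), one obtains $\mathcal{E}(Tw,\Omega)=\mathcal{E}(w,\Omega)$ for every $w\in H^1_\varphi(\Omega)$.

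Finally, since $T$ is an energy-preserving affine bijection of $H^1_\varphi(\Omega)$, it carries critical points to critical points: given a test direction $\eta\in H^1_0(\Omega)$, setting $\mu:=-\eta\circ\sigma\in H^1_0(\Omega)$ one has $Tu+s\eta=T(u+s\mu)$, so $\mathcal{E}(Tu+s\eta,\Omega)=\mathcal{E}(u+s\mu,\Omega)$ has vanishing derivative at $s=0$ because $u$ is a critical point and $\mu\in H^1_0(\Omega)$. Thus $Tu$ is also a critical point of $\mathcal{E}(\cdot,\Omega)$ in $H^1_\varphi(\Omega)$, and by the uniqueness from Theorem~\ref{thm:ndim} we conclude $Tu=u$, i.e.\ $u\circ\sigma=-u$, which is the asserted antisymmetry. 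There is no serious obstacle here beyond Theorem~\ref{thm:ndim} itself; the only points requiring a little care are checking that $T$ is a well-defined involution of $H^1_\varphi(\Omega)$ (which is exactly where the invariance $\varphi\circ\sigma=-\varphi$ enters) and the change of variables establishing $\mathcal{E}\circ T=\mathcal{E}$ (which uses that $\sigma$ is a measure-preserving isometry with $\sigma(\Omega)=\Omega$, together with $A\circ\sigma=A$, $b\circ\sigma=b$, and the evenness of $G$).
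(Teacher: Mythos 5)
Your proposal is correct and follows essentially the same route as the paper: Theorem~\ref{thm:ndim} (via Proposition~\ref{Proposition:Marc1} and Corollary~\ref{Cor:unique_crt}) gives existence and uniqueness of the critical point, and then one observes that $-u\circ\sigma$ is again a critical point of $\mathcal{E}(\cdot,\Omega)$ in $H^1_\varphi(\Omega)$ (using $\sigma(\Omega)=\Omega$, $A\circ\sigma=A$, $b\circ\sigma=b$, $G$ even, $\varphi\circ\sigma=-\varphi$), so uniqueness forces $u\circ\sigma=-u$. Your write-up merely makes explicit, via the involution $Tw=-w\circ\sigma$ and the change of variables, the verification that the paper leaves implicit.
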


\begin{Remark}
The conclusions on uniqueness and antisymmetry of Theorem~\ref{thm:ndim} and
Corollary~\ref{cor:OddHigher} hold in the two following cases in which we assume
$A\equiv a\,{\rm Id}$ and $G(u)=(1-u^2)^2/4$.
\begin{enumerate}
\item[{\rm (i)}] $a(x)=b(x)=e^{\alpha|x|^2}$ for all $x\in\Omega$ and $2\alpha N\geq 1$
(see Example~\ref{ex6:2}~(ii)).
\item[{\rm (ii)}]$a(x)=|x|^{\beta+2}$, $b(x)=|x|^\beta$ for all $x\in\Omega$,
and $\beta > -N$ (see Remark~\ref{rem6:7}. Note that here $a\not\equiv b$).
\end{enumerate}
The log-convex weight $e^{\alpha|x|^2}$, $\alpha>0$, also appears in the paper \cite{RCBM}.
There, in Theorem 5.2, it is proved that balls in $\mathbb{R}^N$ centered at the origin
are the unique minimizers of weighted perimeter for a given weighted volume.
For this, the authors use Steiner symmetrization among other tools.
\end{Remark}

Note that the saddle-shaped solution (mentioned above in the De Giorgi
conjecture in $\mathbb{R}^{2m}$) is a function $u=u(s,t)$ of two radial variables
and it is a critical point of the functional
$$
   \int_{(0,L)^2} \Big\{ \frac{1}{2} |\nabla u|^2 + G(u)  \Big\} s^{m-1} t^{m-1} ds dt,
$$
namely a functional of the type \eqref{eq:NDimFunctional} with
$A(x) = (st)^{m-1} {\rm Id}$ and $b(x) = (st)^{m-1}$.
For these weights, our results show that the minimizers are antisymmetric if $L$ is small enough,
whereas our uniqueness result cannot be applied for large $L$ (see Section~6).

\subsection{Plan of the paper}
We have organized our paper as follows. The continuous odd rearrangement
and its main properties, stated in Theorems~\ref{thm:OddRearrangement},
\ref{thm:Brahms1}, and Remark~\ref{rem:OddRearrangement}, are contained in
Section~\ref{sec:Rearrangement}.
In Section~\ref{sec:Monotonicity}, we discuss the monotonicity of
solutions for the one-dimensional problem, and prove
Theorem~\ref{thm:Increasing}.
In Section~\ref{section4}, we prove our uniqueness result stated
in Theorem~\ref{thm:Brahms2}, as well as a more general result
(Corollary~\ref{thm:Uniqueness}).
Section~\ref{section5} is devoted to give conditions on the weights
under which minimizers in large intervals are not odd functions (we
prove in particular Proposition~\ref{cor:characteriation}).
Finally, in Section~\ref{sec:ndim} we give some uniqueness results
in higher dimensions and prove Theorem~\ref{thm:ndim}.

\section{Antisymmetry of critical points: continuous odd rearrangement} \label{sec:Rearrangement}

In this section we collect general properties of minimizers and
we show how antisymmetry of critical points can be obtained
by using our new continuous rearrangement.

\subsection{General properties of minimizers}

We start by giving some qualitative properties of minimizers of
$\mathcal{E} (\cdot, I)$ that can be obtained without any monotonicity
assumption on the weights $a$ and $b$. Here, and in the rest of the paper,
$$
I:=(-L,L).
$$

\begin{Lemma}\label{lem:OderedMin}
Let $L>0$, $m\geq 0$, $G\in C^{1,1}_{\rm loc}(\mathbb{R})$, and assume that $a$, $b$, and $G$ satisfy \eqref{eq:A}.
If $u_1$ and $u_2$ are two minimizers of $\mathcal{E} (\cdot,I)$ in $H^1_m(I)$,
then the following alternative holds:
\begin{equation*} \label{eq:OderedMin}
  \textrm{either }u_1 > u_2
  \hbox { in }  I,
  \quad \, \textrm{ or }
  u_1 < u_2
  \hbox{ in } I,
  \quad \, \textrm{ or }
  u_1 \equiv u_2
  \hbox{ in } I.
\end{equation*}
\end{Lemma}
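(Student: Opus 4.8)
The plan is to argue via a standard strong-maximum-principle / unique-continuation mechanism applied to the difference $w := u_1 - u_2$. First I would recall that minimizers of $\mathcal{E}(\cdot, I)$ in $H^1_m(I)$ are weak (hence, by elliptic regularity with $a, b \in C^1$ and $G \in C^{1,1}_{\mathrm{loc}}$, classical $C^2$) solutions of the Euler--Lagrange equation \eqref{E-L}: $-(a u_i')' = b f(u_i)$ in $I$ with $u_i(\pm L) = \pm m$. Subtracting the two equations, $w = u_1 - u_2$ satisfies $-(a w')' = b\big(f(u_1) - f(u_2)\big)$ in $I$, with $w(-L) = w(L) = 0$.

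Next I would linearize the right-hand side: writing $f(u_1) - f(u_2) = c(x)\, w(x)$ where
$$
c(x) := \begin{cases} \dfrac{f(u_1(x)) - f(u_2(x))}{u_1(x) - u_2(x)} & \text{if } u_1(x) \neq u_2(x), \\[2mm] 0 & \text{otherwise,} \end{cases}
$$
and noting that since $u_1, u_2$ are bounded (they lie in $C([-L,L])$) and $f = -G' \in C^{0,1}_{\mathrm{loc}}$, the function $c$ is bounded and measurable on $I$, with $\|c\|_{L^\infty(I)} \le \mathrm{Lip}(f; [-\|u_1\|_\infty - \|u_2\|_\infty,\ \|u_1\|_\infty + \|u_2\|_\infty])$. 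Thus $w$ solves the linear equation $-(a w')' - b\, c(x)\, w = 0$ in $I$ with a bounded zeroth-order coefficient and with $a$ uniformly positive and $C^1$, i.e. a uniformly elliptic operator in divergence form (one dimension, so no regularity subtleties). The claimed trichotomy is then exactly the strong maximum principle / Harnack alternative: either $w \equiv 0$, or $w$ never vanishes in the open interval $I$. To see the latter: if $w(x_0) = 0$ for some $x_0 \in I$ but $w \not\equiv 0$, apply the strong maximum principle to $\pm w$ on the two subintervals adjacent to $x_0$ to derive a contradiction --- more cleanly, one can invoke that a nontrivial solution of a second-order linear ODE $-(aw')' = b c w$ cannot have a double zero (if $w(x_0) = w'(x_0) = 0$ then $w \equiv 0$ by uniqueness for the ODE initial value problem, using $w' \in C^1$ since $aw' \in C^1$ and $a \in C^1$, $a > 0$), so all zeros are simple, and then a sign change of $w$ at an interior simple zero is incompatible with minimality (see below) --- hence $w > 0$ on all of $I$ or $w < 0$ on all of $I$.

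The one genuinely nontrivial point --- and the main obstacle --- is ruling out the case where $w$ changes sign at an interior zero (simple zeros alone do not contradict the ODE; one needs the variational/minimality hypothesis, not just that $u_1, u_2$ solve \eqref{E-L}). Here I would use the classical fact that the minimum of two minimizers is again a minimizer: since $\min(u_1, u_2)$ and $\max(u_1, u_2)$ both lie in $H^1_m(I)$ and $\mathcal{E}(\min(u_1,u_2), I) + \mathcal{E}(\max(u_1,u_2), I) = \mathcal{E}(u_1, I) + \mathcal{E}(u_2, I)$ (the kinetic term splits because $|(\min(u_1,u_2))'|^2 + |(\max(u_1,u_2))'|^2 = |u_1'|^2 + |u_2'|^2$ a.e., and the potential term splits pointwise), each of $\min(u_1,u_2)$ and $\max(u_1,u_2)$ must also be a minimizer, hence a $C^2$ solution of \eqref{E-L}. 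If $w$ vanished at some $x_0 \in I$ and changed sign there, then $\min(u_1, u_2)$ would agree with $u_1$ on one side of $x_0$ and with $u_2$ on the other; being $C^2$, its one-sided second derivatives at $x_0$ would have to match, forcing $u_1 \equiv u_2$ on a neighborhood of $x_0$ by ODE uniqueness, and then globally by unique continuation --- contradicting the sign change. (Alternatively: $w$ has a simple zero at $x_0$, so near $x_0$ we have, say, $u_1 > u_2$ on the right; on that side $\min(u_1,u_2) = u_2$ and to its left $\min(u_1,u_2) = u_1 > u_2$, so the minimizer $\min(u_1,u_2) \ge u_2$ touches $u_2$ from above at $x_0$, and by the strong maximum principle applied to $\min(u_1,u_2) - u_2 \ge 0$ --- which solves the linear equation with coefficient $\tilde c$ as above --- we get $\min(u_1,u_2) \equiv u_2$ near $x_0$, i.e. $u_1 \ge u_2$ near $x_0$ on both sides, contradicting the sign change.) This disposes of the mixed case, leaving precisely the three alternatives in the statement. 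I would present the argument in the clean form: $w$ solves a linear divergence-form ODE with bounded coefficients and $w(\pm L) = 0$; apply the strong maximum principle to conclude $w$ has constant sign or vanishes identically; and invoke the "min of minimizers is a minimizer" identity only to handle the boundary-touching/sign-change subtlety, since $u_1, u_2$ are not merely solutions but minimizers.
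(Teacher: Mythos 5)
Your proposal is correct and uses essentially the same ingredients as the paper's proof: the cutting-and-energy argument (that $\min(u_1,u_2)$ and $\max(u_1,u_2)$ are again minimizers, hence $C^2$ solutions of \eqref{E-L}) combined with Cauchy uniqueness for the ODE / the strong maximum principle, merely organized around ruling out a sign change of $w=u_1-u_2$ instead of comparing $\min(u_1,u_2)$ directly with $u_1$. One cosmetic slip: in your parenthetical alternative the comparison should read $\min(u_1,u_2)\le u_2$ (touching from below), or equivalently use $\max(u_1,u_2)\ge u_2$ touching from above; your main argument is unaffected.
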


Before commenting the proof of the lemma, let us start with some generalities that will be used
at different moments of the paper. First, a minimizer $u$ as in the lemma will be a $C^2$ function
satisfying \eqref{E-L} pointwise. Indeed, $u$ being in $H^1_m(I)$ tells us that $u$ is continuous
in $[-L,L]$. Thus $bf(u)$ is also continuous and, by the weak sense of \eqref{E-L}, $au'$
will be $C^1$. Since $a>0$ is $C^1$,
we conclude that $u\in C^2((-L,L))$.

Second, under the hypotheses of the lemma (in particular,  $G\in C^{1,1}_{\rm loc}(\mathbb{R})$),
the initial value problem for the ODE $-(au')'=bf(u)$ in \eqref{E-L} enjoys uniqueness.
More precisely, if two solutions $u_1$
and $u_2$ of the ODE satisfy $u_1(x_0)=u_2(x_0)$ and $u_1'(x_0)=u_2'(x_0)$ for some $x_0\in (-L,L)$,
then they agree. This is a consequence of the classical uniqueness theorem for ODEs, which
in our case requires $a$, $a'$, and $b$ to be bounded and continuous, and $f=-G'$ to be Lipschitz continuous.

\begin{proof}[Proof of Lemma~{\rm \ref{lem:OderedMin}}]
We use a well known cutting and energy argument. One considers the function $v:=\min(u_1,u_2)$,
which satisfies $v\leq u_1$. Using that both $u_1$ and $u_2$ are minimizers, one easily
shows that $v$ has the same energy as $u_1$, and thus $v$ is also a minimizer (see
the details in~\cite[Lemma 3.1]{JerisonMonneau}, for instance).

Now, since $v\leq u_1$ are both solutions of the equation, the strong maximum principle leads
to the alternative of the lemma. Alternatively, the same conclusion can be deduced from the uniqueness
theorem for the initial value problem for the ODE (commented above).
\end{proof}

The following proposition collects other important properties of minimizers.
%
\begin{Proposition} \label{prop:Schubert}
Let $L>0$, $m\geq 0$, $G\in C^{1,1}_{\rm loc}(\mathbb{R})$, and that $a$, $b$, and $G$ satisfy \eqref{eq:A}.
If $u$ is a minimizer of $\mathcal{E} (\cdot,I)$ in $H^1_m(I)$,
then the following hold:
\begin{enumerate}
\item[{\rm (i)}]
For $m=0$, we have either $u \equiv 0$ or $|u|>0$ in $I$. Moreover, $u$ is even;
\item[{\rm (ii)}]
$u$ is odd if and only if  $u(0) = 0$;
\item[{\rm (iii)}]
 For $m >0$, $u$ has exactly one zero and $u'(0) >0$;
\item[{\rm (iv)}]
If $m>0$ and $ -m \leq u \leq m$, then $u$ is increasing.
\end{enumerate}
\end{Proposition}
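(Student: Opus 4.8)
\emph{Strategy.} The plan rests on three facts, used repeatedly. \emph{(a)} As observed just before the statement, a minimizer $u$ is a $C^2$ solution of $-(au')'=bf(u)$ in \eqref{E-L}, and the Cauchy problem for this ODE has uniqueness ($f=-G'$ is locally Lipschitz and $a,a',b$ are bounded continuous); moreover $f(0)=-G'(0)=0$ since $G$ is even, so the constant $0$ solves the ODE, and therefore $u(x_0)=u'(x_0)=0$ for some $x_0\in(-L,L)$ forces $u\equiv0$. \emph{(b)} By evenness of $a,b,G$, the flipped $u_\star(x)=-u(-x)$ is again a minimizer in $H^1_m(I)$ (its energy equals $\mathcal E(u,I)$ after the substitution $x\mapsto-x$), and when $m=0$ the reflection $x\mapsto u(-x)$ and $|u|$ are minimizers in $H^1_0(I)$ as well (because $G(|u|)=G(u)$ and $(|u|')^2=(u')^2$ a.e.). \emph{(c)} By Lemma~\ref{lem:OderedMin}, any two minimizers in $H^1_m(I)$ are strictly ordered on $I$ or coincide; in particular two minimizers agreeing at a single point of $I$ agree on all of $I$.

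\emph{Parts (i) and (ii).} For (i), with $m=0$: $|u|\in H^1_0(I)$ is a minimizer by (b); if it vanished at some $x_0\in I$ it would have an interior minimum there, hence $|u|'(x_0)=0$, so $u\equiv0$ by (a); otherwise $|u|>0$ in $I$. For evenness, $\tilde u(x):=u(-x)$ is a minimizer in $H^1_0(I)$ with $\tilde u(0)=u(0)$, so $u\equiv\tilde u$ by (c). For (ii), $u$ odd clearly gives $u(0)=0$; conversely, if $u(0)=0$ then $u_\star$ is a minimizer in $H^1_m(I)$ with $u_\star(0)=-u(0)=0=u(0)$, hence $u\equiv u_\star$ by (c), i.e.\ $u$ is odd.

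\emph{Part (iii).} Let $m>0$. By the intermediate value theorem $u$ has a zero, and by (a) every zero is transversal; hence the zeros are isolated, finite in number, $u$ changes sign at each, and their total number is odd. Suppose it were $\geq3$ and pick three consecutive zeros $z_1<z_2<z_3$. Set $w:=-u$ on $[z_1,z_2]$ and $w:=u$ elsewhere; then $w\in H^1_m(I)$ is continuous, and $\mathcal E(w,I)=\mathcal E(u,I)$ (because $(w')^2=(u')^2$ a.e.\ and $G(w)=G(u)$ by evenness), so $w$ is a minimizer. Since $u$ alternates sign between consecutive zeros, $w$ has constant sign on $(z_1,z_3)\setminus\{z_2\}$ with $w(z_2)=0$, so $z_2$ is an interior extremum of $w$, $w'(z_2)=0$, and $w\equiv0$ by (a) --- contradicting $w(L)=m\neq0$. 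Thus $u$ has a unique zero $z$, and there $u'(z)>0$ (it passes from $-$ to $+$). For $u'(0)>0$: by (c) applied to $u,u_\star$, either $u\equiv u_\star$, in which case $z=0$ and $u'(0)=u'(z)>0$; or, after possibly replacing $u$ by $u_\star$ (harmless, since $u_\star$ is a minimizer, is odd iff $u$ is, and $u_\star'(0)=u'(0)$), we have $u>u_\star$, hence $u(0)>-u(0)$, i.e.\ $u(0)>0$ and $0$ lies to the right of $z$. I would then conclude by showing that $u$ has no local maximum on $(z,0]$, via a cut-and-paste comparison of the kind used in (iv); I expect this to be the main obstacle in (iii).

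\emph{Part (iv).} Here $m>0$ and $-m\leq u\leq m$; by (iii) $u$ has a unique zero. I would first show $u$ does not attain $\pm m$ at an interior point: if $u(x_1)=m$ with $x_1\in(-L,L)$, then $x_1$ is a global maximum, $u'(x_1)=0$ and, by the equation, $u''(x_1)=\tfrac{b(x_1)}{a(x_1)}G'(m)$; the case $G'(m)>0$ contradicts $x_1$ being a maximum, $G'(m)=0$ forces $u\equiv m$ by (a) (contradicting $u(-L)=-m$), and $G'(m)<0$ is excluded by comparing the restriction of $u$ to a maximal interval where it equals $m$ only at $x_1$ with a small competitor exploiting $G'(m)<0$ --- a comparison showing that this restriction, being a minimizer with data $(m,m)$, cannot stay $\leq m$. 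Hence $-m<u<m$ in $I$, and the analogous analysis near the endpoints shows $u$ is monotone there. If $u$ were not increasing it would have an interior local extremum; at a local maximum $x_1$ the equation forces $G'(u(x_1))<0$, and then, taking the first return $x_3>x_1$ of $u$ to the level $c_1:=u(x_1)$ (which exists since $u(L)=m>c_1$), the restriction $u|_{[x_1,x_3]}$ is a minimizer with data $(c_1,c_1)$ that dips below $c_1$, contradicting the same comparison now at level $c_1$; a local minimum is handled symmetrically (with $G'>0$ and a bump below the level). Thus $u$ is monotone, hence increasing since $u(-L)<u(L)$, and in fact strictly so, because $u'(x_0)=0$ at an interior $x_0$ would make $x_0$ a local extremum unless $G'(u(x_0))=0$, which by (a) forces $u$ constant. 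The quantitative input --- that a suitable small bump at a level where $G'\neq0$ strictly beats the constant --- is the engine of the argument, and reconciling it with a general, non-symmetric potential $G$ is the main technical difficulty.
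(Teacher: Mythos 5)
Parts (i), (ii), and the ``exactly one zero'' half of (iii) are correct and close to the paper's own arguments: your use of Cauchy uniqueness for the zero solution replaces the strong maximum principle in (i), and your proof of (ii) via the flipped minimizer $u_\star$ and Lemma~\ref{lem:OderedMin} is essentially the ODE alternative the paper itself records. The two places where you defer the work are, however, genuine gaps, and in both cases the paper's mechanism is different from the one you propose.

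For $u'(0)>0$ in (iii): the ordering you obtain from Lemma~\ref{lem:OderedMin} compares $u$ with $u_\star(x)=-u(-x)$, and $u-u_\star$ is an \emph{even} function, so its strict positivity can never produce information about $u'(0)$; this is why you are forced into the unproven claim ``no local maximum on $(z,0]$''. That claim is not established, and the tool you propose for it (a cut-and-paste ``of the kind used in (iv)'') does not transfer, because in (iii) there is no a priori bound $-m\le u\le m$: the value of a putative local maximum may exceed $m$, so the return point at that level, which the (iv)-type flattening needs, may not exist. The paper instead compares $u$ with its reflection $v(x):=u(-x)$, which is also a solution by the evenness of $a,b$ and oddness of $f$. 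If $u<v$ on a component $J$ of $\{u<v\}\cap(0,L)$ (note $u=v$ at $0$ and $u(L)>v(L)$), then by evenness $\mathcal{E}(v,J)=\mathcal{E}(u,-J)$ and $\mathcal{E}(u,J)=\mathcal{E}(v,-J)$, so gluing $v$ into $u$ on whichever of $J$, $-J$ does not increase the energy produces a second minimizer agreeing with $u$ outside that set, contradicting the trichotomy of Lemma~\ref{lem:OderedMin}. Hence $u\ge v$ on $(0,L)$, which gives $u'(0)\ge 0$ directly since $u-v$ vanishes at $0$, and $u'(0)=0$ would force $u\equiv v$ by Cauchy uniqueness, contradicting $u(L)\ne v(L)$. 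Your route does not reach this conclusion.

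For (iv): the ``engine'' you rely on --- that a minimizer with equal boundary data $(c,c)$ at a level where $G'(c)<0$ cannot stay $\le c$ --- is exactly the unproven step you flag, and for the potentials allowed here (even, nonnegative, with no monotonicity structure) it is not a usable lemma: if, say, $G(0)=0$ is the global minimum and $c$ lies on a decreasing stretch of $G$ towards a shallow local minimum with positive value, then on long intervals the minimizer with data $(c,c)$ pays two $O(1)$ transition layers, dips towards $0$, and stays below $c$, so no ``small bump above $c$'' comparison can rule out the dip. The paper's proof never compares with the constant at the level of the local maximum. It takes the smallest local maximum $x_1$ with value $s_1\le m$ (here the bound $-m\le u\le m$ enters, to get the return point $\bar x$ with $u(\bar x)=s_1$), sets $s_2=\min_{[x_1,\bar x]}u$, chooses the level $\bar s$ at which $G$ is \emph{minimal over the whole oscillation range} $[s_2,s_1]$, and flattens $u$ at the value $\bar s$ between the last hitting times $\bar x_1<\bar x_2$ of that level; since $u$ ranges in $[s_2,s_1]$ there, the potential term cannot increase whatever the shape of $G$, the kinetic term drops, and one obtains a minimizer that is constant on an interval, which is impossible by Cauchy uniqueness. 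This choice of level is precisely what your argument is missing; it also makes your preliminary step on interior attainment of $\pm m$ unnecessary (the paper allows $s_1=m$), a step which in the case $G'(m)<0$ again leans on the same unavailable engine.
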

\begin{proof}
{\bf (i)} If $m=0$, the assumption that $G$ is even gives that
$|u| \in H^1_0 (I)$ is also a minimizer. A classical argument based
on the strong maximum principle immediately yields the alternative
$u \equiv 0$ or $|u| >0$ in $I$.

Consider $v(x) := u(-x)$, which is also solution of \eqref{E-L} since $m=0$.
We easily check $\mathcal{E} (u,I) = \mathcal{E} (v,I)$, which shows that $v$ is also a minimizer. Since
$u(0) = v(0)$, we must have $u \equiv v$ by Lemma~\ref{lem:OderedMin}.

{\bf (ii)}
Assume $u(0)=0$. Without loss of generality, we may assume that
$$
  \int_0^L \left\{ \frac{u'^2}{2} a(x)  + G(u) b(x) \right\} dx
  \geq
  \int_{-L}^0 \left\{\frac{u'^2}{2} a(x) + G(u) b(x) \right\} dx.
$$
By defining
$$
   {\tilde u} (x) :=
   \left\{
   \begin{array}{ll}
      u (x)    &\hbox{ for } x \in (-L,0) , \\
      -u(-x)   &\hbox{ for } x \in (0,L),
   \end{array}
   \right.
$$
we easily see  that ${\tilde u}\in H^1_m(I)$ and
$\mathcal{E} (u, I) \geq   \mathcal{E} ({\tilde u}, I)$. Hence ${\tilde u}$ is an odd
minimizer satisfying ${\tilde u}(0) = u(0)$.
By the alternative of Lemma~\ref{lem:OderedMin} we deduce that ${\tilde u} \equiv u$.
This shows that $u$ is odd.

Note that this argument also works in some higher dimensional case under the assumption
that $u$ vanishes in a hyperplane as well as assuming appropriate symmetry assumptions on the domain,
the boundary condition,  and the weights. In dimension
one there is another proof that gives the same statement not only for minimizers
but also for solutions of \eqref{E-L}. Indeed, let $u$ be a solution of \eqref{E-L}
such that $u(0)=0$ and let $u_\star$ be its flipped (as in \eqref{def:flipped}).
Since $u$ and $u_\star$ are solutions of \eqref{E-L}
satisfying $u(0)=u_\star$(0) and $u'(0)=u_\star'(0)$, we conclude that
$u=u_\star$ by uniqueness for the Cauchy problem. Therefore, $u$ is odd.

{\bf (iii)} Let $m>0$.
Assume by contradiction that there exists a nonempty interval $(x_1,x_2) \subset I$
such that $u(x_1) = u(x_2) = 0$ and $u>0$ in $(x_1,x_2)$.
Let $\tilde{u}:=-u$ in $(x_1,x_2)$ and $\tilde{u}:=u$ in $\overline{I}\setminus(x_1,x_2)$
and note that $\mathcal{E} (u, I) =  \mathcal{E} ({\tilde u}, I)$ by \eqref{eq:A}.
Using the alternative of Lemma~\ref{lem:OderedMin} we have a contradiction, since we
would have two minimizers $u$ and $\tilde{u}$ satisfying $u\equiv \tilde{u}$
in $\overline{I}\setminus(x_1,x_2)$.

Consider $v(x) := u(-x)$. We claim that $u>v$ in $(0,L)$. Note that $u(0)=v(0)$
and $u(L)=m>-m=v(L)$. Indeed, assume first that $u<v$ in an open interval
$J\subset(0,L)$ and $u=v$ on $\partial J$. Replacing $u$ by $v$ if necessary
we may assume that $\mathcal{E}(v,J)\leq \mathcal{E}(u,J)$. Therefore, defining $\bar{u}=u$ in
$I\setminus J$ and $\bar{u}=v$ in $\bar{J}$, we obtain that $\bar{u}$ is a minimizer
of $\mathcal{E}(\cdot,I)$ in $H^1_m(I)$ different from $u$. This is a contradiction
with the alternative of Lemma~\ref{lem:OderedMin} and proves that $u\geq v$ in $(0,L)$.
However, if there exists $x_0\in(0,L)$ such that $u(x_0)=v(x_0)$ we would have
$u'(x_0)=v'(x_0)$ (since $u\geq v$ in $(0,L)$). This is a contradiction we the
uniqueness of the Cauchy problem, since $u$ and $v$ are solutions of equation
\eqref{E-L} satisfying $u(x_0)=v(x_0)$, $u'(x_0)=v'(x_0)$, and $u(L) \not = v(L)$,
and proves the claim.

As a consequence, we obtain that $u'(0) \geq 0$, and in fact, $u'(0) > 0$ (otherwise we would
have $u \equiv v$, again by uniqueness of the Cauchy problem, which cannot hold since $u(L) \not = v(L)$).

{\bf (iv)}
We first claim that $u$ is nondecreasing.
Assume on the contrary that the minimizer admits two local extrema. Let
$x_1\in(-L,L)$ be the smallest local maximum and let $s_1\in(-m,m]$ be its critical value. Let
$\bar{x}$ be the smallest solution to $u(x)=s_1$ in $(x_1,L]$ and
$s_2=\min_{[x_1,\bar{x}]}u$. Let  $G(\bar{s}) = \min_{s \in [s_2,s_1]} G(s)$,
$\bar{x}_1=\sup\{\tau\in(-L,x_1):u(\tau)=\bar{s}\}$, and
$\bar{x}_2=\sup\{\tau\in(-L,\bar{x}):u(\tau)=\bar{s}\}$ (see Figure~\ref{fig3}).

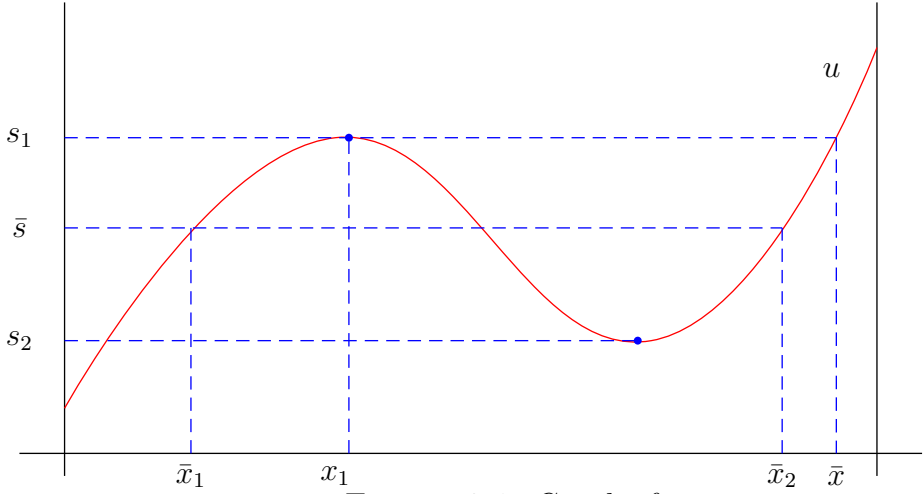
\begin{figure}[ht]
\begin{center}
\psset{xunit=6cm,yunit=3cm}
\begin{pspicture}(0,1.1)
\psline[linewidth=0.5pt]{-}(-1.1,-1)(0.9,-1)
\psline[linewidth=0.5pt]{-}(-1,-1.1)(-1,1)
\psline[linewidth=0.5pt]{-}(0.8,-1.1)(0.8,1)

\pscurve[linecolor=red,linewidth=0.5pt]{-}(-1,-0.8)(-0.4,0.4)(0.3,-0.5)(0.8,0.8)

\psline[linecolor=blue,linestyle=dashed,linewidth=0.5pt]{-*}(-0.37,-1)(-0.37,0.4)
\rput(-0.4,-1.1){$x_1$}

\psline[linecolor=blue,linestyle=dashed,linewidth=0.5pt]{-}(-1,0.4)(0.71,0.4)
\psline[linecolor=blue,linestyle=dashed,linewidth=0.5pt]{-}(0.71,0.4)(0.71,-1)
\rput(-1.1,0.4){$s_1$}
\rput(0.71,-1.1){$\bar{x}$}

\psline[linecolor=blue,linestyle=dashed,linewidth=0.5pt]{-*}(-1,-0.5)(0.27,-0.5)
\rput(-1.1,-0.5){$s_2$}

\psline[linecolor=blue,linestyle=dashed,linewidth=0.5pt]{-}(-1,0.0)(0.59,0.0)
\psline[linecolor=blue,linestyle=dashed,linewidth=0.5pt]{-}(-0.72,-1.0)(-0.72,0.0)
\psline[linecolor=blue,linestyle=dashed,linewidth=0.5pt]{-}(0.59,-1.0)(0.59,0.0)
\rput(-1.1,-0.0){$\bar{s}$}
\rput(-0.72,-1.1){$\bar{x}_1$}
\rput(0.59,-1.1){$\bar{x}_2$}

\rput(0.7,0.7){$u$}

\end{pspicture}
\end{center}
\vspace{2.7cm}
  \caption{Graph of $u$}
  \label{fig3}
\end{figure}

Defining
$$
      \tilde u (x) :=
   \left\{
      \begin{array}{cl}
          \bar{s}     &\hbox{ if } x \in (\bar{x}_1,\bar{x}_2), \\
           u(x)       &\hbox{ otherwise},
      \end{array}
   \right.
$$
we easily see that $\tilde{u}\in H^1_m(I)$ and $\mathcal{E} (\tilde u, I) \leq \mathcal{E} (u, I) $.
Therefore, $\tilde{u}$ is a minimizer which is constant in $[\bar{x}_1,\bar{x}_2]$.
This contradiction proves the claim.

Hence, ${\tilde u}$ is a nondecreasing minimizer and by the strong maximum
principle it must be increasing.
\end{proof}

\subsection{Continuous odd rearrangement}\label{subsec2:2}

In this subsection we prove Theorems \ref{thm:OddRearrangement} and \ref{thm:Brahms1}.
As we said in Remark~\ref{rem:OddRearrangement} both results will be proved in fact for functionals
for which the weights $a$ and $b$ are not necessarily equal.

Given an increasing and odd diffeomorphism $\gamma: (-L,L) \to (-{\tilde L}, {\tilde L})$
of class $C^1$ and making the change of variables $x=\gamma^{-1}(y)$ we obtain
$$
\mathcal{E}(u,(-L,L))=\int_{-L}^L\frac{1}{2}(u')^2a(x)+G(u)b(x)\,dx
=\int_{\gamma(-L)}^{\gamma(L)}\frac{1}{2}(\tilde{u}')^2\tilde{a}(y)+G(\tilde{u})\tilde{b}(y)\,dy
$$
where
$$
   {\tilde u} := u \circ \gamma^{-1},
   \qquad
   {\tilde a} := (a \gamma') \circ \gamma^{-1},
   \qquad\textrm{and}\qquad
   {\tilde b} :=  \frac{b}{\gamma'} \circ \gamma^{-1} \,.
$$
Similarly, a straightforward computation shows that, when $G\in C^{0,1}_{\rm loc}(\mathbb{R})$,
problem~\eqref{E-L} is equivalent to the following
\begin{equation}\label{eq:E-LBis}
\left\{
\begin{array}{l}
 - ( \tilde{a} {\tilde u} ')'={\tilde b} \, f({\tilde u})
   \quad
   \hbox{ in } (-\tilde{L},\tilde{L}),\\
  {\tilde u} (\tilde{L}) \, = \, - {\tilde u} (- \tilde{L})= m,
\end{array}
\right.
\end{equation}
where $\tilde L=\gamma(L)$.

In particular, the diffeomorphism
\begin{equation} \label{eq:ChangeVariable}
   \gamma_1 (x) :=  \int_0^x \sqrt{\frac{b}{a} }
   \qquad (\hbox{respectively,} \, \, \,
   \gamma_2 (x) := \int_0^x b)
\end{equation}
allows to rewrite the functional $\mathcal{E}(u,(-L,L))$ (or problem~\eqref{E-L}) as
$$
\mathcal{E}(\tilde{u},(-\gamma(L),\gamma(L)))
= \int_{-\gamma(L)}^{\gamma(L)}\frac{1}{2}(\tilde{u}')^2\tilde{a}(y)+G(\tilde{u})\tilde{b}(y)\,dy
$$
(or ~\eqref{eq:E-LBis}) with weights $({\tilde a}, {\tilde b})$ satisfying
${\tilde a} \equiv {\tilde b}=\sqrt{ab}\circ\gamma_1^{-1}$
(respectively, $\tilde{a}=(ab)\circ\gamma_2^{-1}$ and ${\tilde b} \equiv 1$).

Note that
\begin{equation}\label{newexp}
\frac{(\sqrt{ab})'}{b}\circ \gamma_1^{-1}
=
\frac{(\sqrt{ab})'\circ\gamma_1^{-1}}{\sqrt{ab}\circ\gamma_1^{-1}}(\gamma_1^{-1})'
=
\frac{\tilde{a}'}{\tilde{a}}
=
({\rm log}\, \tilde{a})'
\end{equation}
and
\begin{equation}\label{newexp2}
\frac{(\sqrt{ab})'}{b}\circ\gamma_2^{-1}
=
(\sqrt{ab}\circ\gamma_2^{-1})'
=
(\sqrt{\tilde{a}})'.
\end{equation}
This shows that assumption \eqref{eq:BerNiren} is equivalent to the log-convexity of $\tilde{a}$
when ${\tilde a} \equiv {\tilde b}=\sqrt{ab}\circ\gamma_1^{-1}$ and to the convexity of $\sqrt{\tilde{a}}$
when $\tilde{a}=(ab)\circ\gamma_2^{-1}$ and ${\tilde b} \equiv 1$.

We now define the continuous odd rearrangement of an increasing function
(with respect to the weight $b\equiv1$).

\begin{Definition}\label{def:rear}
Let $v \in H^1_m (I)$ be an increasing function. Let us denote the inverse of $v$ as $\rho$, \textit{i.e.},
$$
v(x)=\lambda\qquad\textrm{if and only if}\qquad \rho(\lambda)=x.
$$
Let $t\in[0,1]$ and define the family of functions
$$
\rho^t(\lambda):=t\rho(\lambda)+(1-t)\rho_\star(\lambda)\quad\textrm{ for all }
\lambda\in [-m,m],
$$
where $\rho_\star(\lambda)=-\rho(-\lambda)$ denotes the flipped of $\rho$.
It is clear that $\rho^t$ is an increasing function for all $t\in[0,1]$.
We define the \textit{continuous odd rearrangement} $\{v^t\}_{0\leq t\leq1}$
of $v$ as the family of inverse functions of $\{\rho^t\}_{0\leq t\leq1}$.
\end{Definition}

\begin{Remark}\label{rmk:lll}
Note that $\rho^t$ (or $v^t$) will be an odd function if and only if
$(2t-1)(\rho(\lambda)+\rho(-\lambda))=0$ for all $\lambda\in[-m,m]$.
In particular, the continuous rearrangement $v^t$ is an odd function
if either $\rho$ is odd (\textit{i.e.}, $v$ is odd) or $t=1/2$.
We call $v^{1/2}$ the \textit{odd rearrangement} of $v$.
\end{Remark}

For a positive even weight $a$ which is square root convex (when $b\equiv1$), and a general
even nonlinearity $G$, we can prove that continuous odd rearrangements and
Schwarz rearrangements share similar properties.

We start proving that the weighted Dirichlet integral is decreased under the odd rearrangement
$v^{1/2}$ when $\sqrt{a}$ is convex. This is the key that later leads to oddness of minimizers.
However, since we also prove oddness of critical points (not necessarily minimizers), we need to
use the whole family $v^t$, $t\in[0,1]$, in the continuous odd rearrangement.
Next result states that all functions $v^t$ have less weighted Dirichlet energy than $v$, when
$\sqrt{a}$ is convex.
\begin{Lemma}\label{Lem:Dirichlet}
Let $a \in C^0([-L,L])$ be an even positive function such that
$\sqrt a$ is convex. If $v \in C^1 ([-L,L])$ is increasing and $v(L)=-v(-L)=m>0$,
then it holds either that
\begin{equation}\label{claim1}
h(t):=\int_{-L}^L \left( \frac{d v^t}{dx } \right)^2 \! a(x) \, dx
  \, < \, \int_{-L}^L
  \left( \frac{d v}{dx } \right)^2 \! a(x) \, dx   \quad \textrm{for all }t\in(0,1),
\end{equation}
or that $v^t=v$ for all $t\in[0,1]$.
\end{Lemma}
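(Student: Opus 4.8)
\textbf{Proof proposal for Lemma~\ref{Lem:Dirichlet}.}
The plan is to compute the weighted Dirichlet integral of $v^t$ by the coarea formula, reducing it to a one-dimensional integral over the level values $\lambda\in(0,m)$, and then to compare the integrand pointwise in $\lambda$ using the convexity of $\sqrt a$. First I would write, using $v^t(x)=\lambda \iff x=\rho^t(\lambda)$ and the change of variables $x=\rho^t(\lambda)$,
$$
h(t)=\int_{-L}^L\left(\frac{dv^t}{dx}\right)^2a(x)\,dx
=\int_{-m}^{m}\frac{a(\rho^t(\lambda))}{(\rho^t)'(\lambda)}\,d\lambda
=\int_0^m\left\{\frac{a(\rho^t(\lambda))}{(\rho^t)'(\lambda)}+\frac{a(\rho^t(-\lambda))}{(\rho^t)'(-\lambda)}\right\}d\lambda,
$$
where in the last step I split $(-m,m)$ into $(0,m)$ and $(-m,0)$ and substituted $\lambda\mapsto-\lambda$. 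Recall $\rho^t(\lambda)=t\rho(\lambda)-(1-t)\rho(-\lambda)$, so that $(\rho^t)'(\lambda)=t\rho'(\lambda)+(1-t)\rho'(-\lambda)$; the point is that $(\rho^t)'(\lambda)=(\rho^t)'(-\lambda)$, i.e.\ the derivative of $\rho^t$ is an \emph{even} function of $\lambda$. Denote this common value by $D_t(\lambda):=t\rho'(\lambda)+(1-t)\rho'(-\lambda)>0$, and set $\alpha:=\rho(\lambda)$, $\beta:=-\rho(-\lambda)$, so $\rho^t(\lambda)=t\alpha+(1-t)\beta$ and $\rho^t(-\lambda)=-(t\beta+(1-t)\alpha)=(1-t)(-\beta)+t(-\alpha)$; note $D_1(\lambda)=\rho'(\lambda)$, $D_0(\lambda)=\rho'(-\lambda)$ so $D_t=tD_1+(1-t)D_0$.

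The core of the argument is then the pointwise (in $\lambda$) inequality
$$
\frac{a(t\alpha+(1-t)\beta)+a(t(-\alpha)+(1-t)(-\beta))}{t\rho'(\lambda)+(1-t)\rho'(-\lambda)}
\;\le\;
\frac{a(\alpha)+a(-\alpha)}{\rho'(\lambda)}\quad\text{?}
$$
— but this is not quite the right comparison since the right-hand side should be $h(1)$'s integrand, namely (using $t=1$: $\rho^1=\rho$, $D_1=\rho'(\lambda)$, and $a(\rho(-\lambda))=a(-\beta)$, with $a$ even so $=a(\beta)$) $\dfrac{a(\alpha)+a(\beta)}{\rho'(\lambda)}$. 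So I would instead prove, for each fixed $\lambda$, the two-variable inequality
$$
\frac{a(t\alpha+(1-t)\beta)+a(t\beta+(1-t)\alpha)}{t\rho'(\lambda)+(1-t)\rho'(-\lambda)}
\le
\frac{a(\alpha)+a(\beta)}{\rho'(\lambda)},
$$
using $a$ even (which identifies $a(\rho^t(-\lambda))=a(t\beta+(1-t)\alpha)$) together with the convexity of $\sqrt a$. The mechanism: by Cauchy–Schwarz / the inequality between arithmetic means applied to the convex function $\sqrt a$, one has $\sqrt{a(t\alpha+(1-t)\beta)}\le t\sqrt{a(\alpha)}+(1-t)\sqrt{a(\beta)}$ and similarly with $\alpha,\beta$ swapped; squaring, summing, and using $(x_1+y_1)^2+(x_2+y_2)^2$ bounds plus the elementary inequality $\dfrac{(p+q)^2}{r+s}\le\dfrac{p^2}{r}+\dfrac{q^2}{s}$ (for $r,s>0$), applied with $p=\sqrt{a(\alpha)}$, etc., and $r,s$ built from $\rho'(\lambda),\rho'(-\lambda)$, should collapse to the claimed bound. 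This is exactly the computation underlying the classical weighted P\'olya–Szeg\H o inequality, now done "from $v$ to $v^t$" rather than "from $v$ to its symmetrization"; I expect it to go through with the single hypothesis that $\sqrt a$ is convex.

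Finally, for the dichotomy: integrating the pointwise inequality over $\lambda\in(0,m)$ gives $h(t)\le h(1)$ for all $t\in[0,1]$. To get the strict inequality for $t\in(0,1)$ unless $v^t\equiv v$, I would track the equality case of the elementary inequalities above. Equality in $\dfrac{(p+q)^2}{r+s}\le\dfrac{p^2}{r}+\dfrac{q^2}{s}$ forces $p/r=q/s$, and equality in the convexity step for $\sqrt a$ forces $a$ to be affine in $\sqrt{\cdot}$ along the relevant segments (or $\alpha=\beta$); chasing these through, equality in \eqref{claim1} for a single $t\in(0,1)$ forces $\rho(\lambda)=-\rho(-\lambda)$ for a.e.\ $\lambda$, i.e.\ $\rho$ odd, i.e.\ $v$ odd, in which case $\rho^t=\rho$ and $v^t=v$ for every $t$. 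The main obstacle I anticipate is precisely this equality analysis: the pointwise inequality is a composition of several sharp inequalities (convexity of $\sqrt a$ twice, and the $\frac{(p+q)^2}{r+s}$ inequality), so I will need to argue carefully that equality at one interior $t$ propagates to equality for a.e.\ $\lambda$ and then back to oddness of $v$; I would handle this by first establishing that $h$ is convex on $[0,1]$ with $h(0)=h(1)$ (by the symmetry $v^t\leftrightarrow v^{1-t}$ under flipping, which swaps $\alpha\leftrightarrow\beta$), so that $h(t)\le h(1)$ automatically, and a single interior equality then forces $h$ constant, at which point the pointwise equality discussion is only needed at, say, $t=1/2$.
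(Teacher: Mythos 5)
Your overall strategy is the paper's: coarea formula, a pointwise (in $\lambda$) comparison of the level-set integrands using the convexity of $\sqrt a$ together with the elementary inequality $\frac{(p+q)^2}{r+s}\le\frac{p^2}{r}+\frac{q^2}{s}$, and then an equality analysis forcing $\rho$ odd. But the step on which you hang the comparison is wrong: $(\rho^t)'$ is \emph{not} an even function of $\lambda$. From $\rho^t(\lambda)=t\rho(\lambda)-(1-t)\rho(-\lambda)$ one gets $(\rho^t)'(\lambda)=t\rho'(\lambda)+(1-t)\rho'(-\lambda)$, whereas $(\rho^t)'(-\lambda)=t\rho'(-\lambda)+(1-t)\rho'(\lambda)$; these coincide only for $t=1/2$ (or where $\rho'$ happens to be even). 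Because of this, the ``core'' inequality you propose,
\begin{equation*}
\frac{a(t\alpha+(1-t)\beta)+a(t\beta+(1-t)\alpha)}{t\rho'(\lambda)+(1-t)\rho'(-\lambda)}
\;\le\;\frac{a(\alpha)+a(\beta)}{\rho'(\lambda)},
\end{equation*}
is not the pointwise comparison of the integrands of $h(t)$ and $h(1)$, and it is false as stated: already for $a\equiv1$ and $\rho'(-\lambda)<\rho'(\lambda)$ the left side equals $2/\bigl(t\rho'(\lambda)+(1-t)\rho'(-\lambda)\bigr)$, which exceeds the right side $2/\rho'(\lambda)$. The correct comparison keeps all four denominators distinct,
\begin{equation*}
\frac{a(\rho^t(\lambda))}{t\rho'(\lambda)+(1-t)\rho'(-\lambda)}
+\frac{a(\rho^t(-\lambda))}{t\rho'(-\lambda)+(1-t)\rho'(\lambda)}
\;\le\;\frac{a(\rho(\lambda))}{\rho'(\lambda)}+\frac{a(\rho(-\lambda))}{\rho'(-\lambda)},
\end{equation*}
and this one \emph{does} follow from exactly the two ingredients you list: bound each numerator via convexity of $\sqrt a$ (using that $a$ is even to write $a(\rho^t(-\lambda))=a\bigl(t\rho_\star(\lambda)+(1-t)\rho(\lambda)\bigr)$), then apply the convexity of $(A,P)\mapsto A^2/P$ (your $\frac{(p+q)^2}{r+s}$ inequality) to each of the two quotients separately and add the two resulting estimates. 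So the fix is a rewiring of your comparison, not a new idea; as written, though, the central displayed inequality you set out to prove cannot be proved.

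Two further points. In the equality analysis, once the comparison is set up correctly, equality for a single $t\in(0,1)$ forces (a.e.\ in $\lambda$, away from the set where $\rho'=+\infty$, i.e.\ where $v'=0$, which must be treated separately) $\rho'(\lambda)/\rho'(-\lambda)=\sqrt{a(\rho(\lambda))/a(\rho(-\lambda))}$; the clean way to conclude oddness, which your ``chasing through'' leaves vague, is to note that this says $\Psi(\lambda):=\int_{-\rho(-\lambda)}^{\rho(\lambda)}ds/\sqrt{a(s)}$ has vanishing derivative, and since $\Psi(m)=0$ one gets $\Psi\equiv0$, hence $\rho(\lambda)=-\rho(-\lambda)$. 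Finally, your fallback plan (prove $h$ convex with $h(0)=h(1)$ and reduce the equality discussion to $t=1/2$) is legitimate and is essentially how the paper rederives the lemma in Remark~\ref{Rmk:26} from Lemma~\ref{Lem:Convex_energy}; but the convexity of $h$ is itself a nontrivial step (it requires regularizing $a$ to $C^2$ and an explicit computation showing $h''\ge0$), and you only assert it, so that route remains a plan rather than a proof.
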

Valenti~\cite{NOI} proved that $h(1/2) \leq h(1)$ holds even for $H^1_m$
functions. Its proof uses a reflection argument to convert the odd symmetry property into
even symmetry. He then uses Schwarz decreasing symmetrization ---which
applies to $H^1$ functions (see \cite{LL}). Next we provide a different proof than
the one given in \cite{NOI}.
In fact, our proof applies to all $v^t$, $t\in(0,1)$, assuming that $v\in C^1$.
A possible way to prove our lemma for functions $v\in H^1_m$ (that we do not do here)
would be extending to $v^t$, $t\in(0,1)$,
Coron's result~\cite{Coron} on the continuity of Schwarz rearrangement in dimension $1$
in the $W^{1,p}$-norm. This surely works for $t=1/2$, by the results of \cite{NOI}. Note that Coron's
result is a delicate one and, in fact, continuity of Schwarz rearrangement in $H^1$-norm
does not hold in higher dimensions $N\geq 2$ (see \cite{Alm-Lieb}).

We now establish Lemma~{\rm \ref{Lem:Dirichlet}}.
An alternative proof is given below (see Remark~\ref{Rmk:26}) as a consequence of Lemma~\ref{Lem:Convex_energy}.
The proof is shorter but requires the use of the whole family $v^t$, $t\in(0,1)$, even to establish
Lemma~\ref{Lem:Dirichlet} for $t=1/2$.

\begin{proof}[Proof of Lemma~{\rm \ref{Lem:Dirichlet}}]
We first establish that \eqref{claim1} with $<$ replaced by $\leq$ holds. For this,
by definition of the continuous odd rearrangement we have
\begin{equation}\label{integ1}
   \int_{-L}^L  \left( \frac{d v^t}{dx } \right)^2 \! a(x) \, dx
   = \int_0^m \left\{\frac{a(\rho^t(\lambda))}{(\rho^t)'(\lambda)}
   +
   \frac{a(\rho^t(-\lambda))}{(\rho^t)'(-\lambda)}\right\}\ d\lambda.
\end{equation}

We have to compare this quantity with
$$
\int_{-L}^L \left( \frac{d v}{dx } \right)^2 \! a(x) \, dx
=\int_0^m
   \left\{\frac{a(\rho^1(\lambda))}{(\rho^1)'(\lambda)}+
   \frac{a(\rho^0(\lambda))}{(\rho^0)'(\lambda)}\right\}\ d\lambda.
$$
We will do it pointwise. Since $\sqrt{a}$ is convex the integrand in the second integral
of \eqref{integ1} is less or equal than
$$
\frac{\left(t\sqrt{a(\rho^1)} + (1-t)\sqrt{a(\rho^0)}\right)^2}{t(\rho^1)' + (1-t)(\rho^0)'}
+
\frac{\left(t\sqrt{a(\rho^0)} + (1-t)\sqrt{a(\rho^1)}\right)^2}{t(\rho^0)' + (1-t)(\rho^1)'}.
$$
Therefore, it is sufficient to prove that
\begin{equation}
\begin{split}\label{conv}
\frac{\left(t\sqrt{a(\rho^1)} + (1-t)\sqrt{a(\rho^0)}\right)^2}{t(\rho^1)' + (1-t)(\rho^0)'}
+&
\frac{\left(t\sqrt{a(\rho^0)} + (1-t)\sqrt{a(\rho^1)}\right)^2}{t(\rho^0)' + (1-t)(\rho^1)'}
\\
&\hspace{-2cm}  \leq
\frac{a(\rho^1)}{(\rho^1)'}
+
\frac{a(\rho^0)}{(\rho^0)'}.
\end{split}
\end{equation}

There are two ways to proceed now. First, let
$E:=\{\lambda \in [-m,m] : (\rho^0)'(\lambda)=+\infty\textrm{ and }
  (\rho^1)'(\lambda)=+\infty\}$. It is clear that \eqref{conv} holds
in $E$. In the set $[-m,m]\setminus E$, a	simple, but arduous, computation
shows that the previous inequality is equivalent to
\begin{equation}\label{desigualdad}
\frac{t(1-t)((\rho^0)'+(\rho^1)')}{(\rho^0)'(\rho^1)'}
\left(
(\rho^0)'\sqrt{a(\rho^1)}-(\rho^1)'\sqrt{a(\rho^0)}
\right)^2\geq0
\end{equation}
which clearly holds for all $t\in[0,1]$ (since $\rho^t$ is an increasing function
for all $t\in[0,1]$).
A second proof is the following. By symmetry one sees that
\eqref{conv} follows if we prove that
\begin{equation}\label{convpar}
\frac{\left(t\sqrt{a(\rho^1)}+(1-t)\sqrt{a(\rho^0)}\right)^2}{t(\rho^1)'+(1-t)(\rho^0)'}
\leq
t\frac{a(\rho^1)}{(\rho^1)'}+(1-t)\frac{a(\rho^0)}{(\rho^0)'}
\end{equation}
and we add this same expression by replacing $\rho^1$ by $\rho^0$ and $(\rho^0)'$ by $(\rho^1)'$.
Finally, \eqref{convpar} is easily seen to be true using that the function
$H(A,P)=A^2/P$ is a convex function in $\mathbb{R}_+\times\mathbb{R}_+$
and taking $A_i=\sqrt{a(\rho^i)}$ and $P_i=(\rho^i)'$.

It remains to prove that equality holds in  \eqref{claim1} for some $t\in (0,1)$ if and only if $v^t=v$
for all $t\in[0,1]$.
Assuming that equality holds, using any of the previous approaches
(and developing \eqref{convpar} in the second approach),
we see that all the previous inequalities become
equalities. In particular, for our $t\in(0,1)$, inequality \eqref{desigualdad}
(or \eqref{convpar}) becomes equality, which means
$$
  \frac{(\rho^1)'}{(\rho^0)'}=\sqrt{\frac{a(\rho^1)}{a(\rho^0)}}\quad
  \textrm{for every }\lambda \in [-m,m].
$$
\noindent
This is equivalent to the fact that the derivative of the function
$$
\Psi(\lambda):=\int_{-\rho(-\lambda)}^{\rho(\lambda)}\frac{ds}{\sqrt{a(s)}}
$$
vanishes in $[-m,m]$. Therefore $\Psi(\lambda)=\Psi(m)=0$ for every
$\lambda\in [-m,m]$. It follows immediately that $\rho(\lambda)=-\rho(-\lambda)$.
Hence $\rho$ is odd and its inverse $v$ too.
This automatically leads to $v^t=v$ for all $t\in[0,1]$.
\end{proof}

The following result will be useful to prove Theorem~\ref{thm:Brahms1}, that is,
that critical points of $\mathcal{E}$ are odd.
\begin{Lemma}\label{Lem:Convex_energy}
Let $a \in C^0([-L,L])$ be an even positive function such that
$\sqrt a$ is convex.
If $v \in C^1 ([-L,L])$ is increasing and $v(L)=-v(-L)=m>0$, then
\begin{equation}\label{def:ht}
h(t):=\frac{1}{2}\int_{-L}^L \left( \frac{d v^t}{dx } \right)^2 \! a(x) \, dx,\quad t\in[0,1],
\end{equation}
is a convex function.
\end{Lemma}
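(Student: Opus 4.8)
The plan is to reduce everything, via the coarea formula, to a pointwise‑in‑$\lambda$ convexity statement, in the spirit of the proof of Lemma~\ref{Lem:Dirichlet}. First I would record, using the change of variables $x=\rho^t(\lambda)$ --- legitimate since each $\rho^t(\mu)=t\rho(\mu)+(1-t)\rho_\star(\mu)$ is a strictly increasing absolutely continuous bijection of $[-m,m]$ onto $[-L,L]$, being a convex combination of two such maps --- the identity
\[
h(t)=\frac12\int_0^m\left\{\frac{a(\rho^t(\lambda))}{(\rho^t)'(\lambda)}+\frac{a(\rho^t(-\lambda))}{(\rho^t)'(-\lambda)}\right\}d\lambda .
\]
The key structural observation is that, for each fixed $\lambda$, both $t\mapsto\rho^t(\lambda)$ and $t\mapsto(\rho^t)'(\lambda)=t\,\rho'(\lambda)+(1-t)\,\rho'(-\lambda)$ are \emph{affine} in $t$, and the latter is positive: indeed $\rho'=1/(v'\circ\rho)\geq 1/\|v'\|_{L^\infty}>0$ because $v\in C^1$, while $\rho'$ is finite for a.e.\ $\lambda$ since $\rho$ is monotone.

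Next I would isolate the elementary fact that does the work: if $c=c(t)$ is a nonnegative convex function on $[0,1]$ and $Q=Q(t)$ is a positive affine function on $[0,1]$, then $t\mapsto c(t)^2/Q(t)$ is convex on $[0,1]$. This is a short three‑point argument: for $\bar t=\theta t_1+(1-\theta)t_2$ one has $Q(\bar t)=\theta Q(t_1)+(1-\theta)Q(t_2)$ and $0\leq c(\bar t)\leq\theta c(t_1)+(1-\theta)c(t_2)$, and then one invokes the convexity on $\mathbb{R}\times(0,\infty)$ of the perspective map $H(c,Q)=c^2/Q$ --- exactly the fact already used in the proof of Lemma~\ref{Lem:Dirichlet}. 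I would then apply this with $c(t)=\sqrt{a}(\rho^t(\lambda))$, which is convex in $t$ precisely because $\sqrt a$ is convex and $\rho^t(\lambda)$ is affine in $t$, and with $Q(t)=(\rho^t)'(\lambda)$; this shows that, for a.e.\ $\lambda\in(0,m)$, each of the two summands in the integrand above, hence the whole bracket $\phi_\lambda(t)$, is a nonnegative convex function of $t$.

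Finally I would integrate in $\lambda$. Since $\phi_\lambda$ is convex on $[0,1]$ one has $\phi_\lambda(t)\leq\phi_\lambda(0)+\phi_\lambda(1)$ for all $t$, and the $\lambda$‑integral of the right‑hand side equals $2(h(0)+h(1))$, which is finite because $v\in C^1$ and $a\in C^0$; hence $h(t)=\frac12\int_0^m\phi_\lambda(t)\,d\lambda$ is finite for every $t\in[0,1]$ and, being an average of convex functions of $t$, is convex. The only point requiring genuine care is the bookkeeping behind the change of variables when $v'$ vanishes somewhere --- i.e.\ justifying the displayed coarea identity and the a.e.\ finiteness and positivity of $(\rho^t)'(\pm\lambda)$ --- but this is handled exactly as in Lemma~\ref{Lem:Dirichlet}: monotone functions are differentiable a.e., and the exceptional $\lambda$‑set has measure zero. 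In contrast to the route sketched in the introduction for Theorem~\ref{thm:OddRearrangement}(b.3), this argument needs no regularization of $a$, as it only uses the three‑point form of convexity and never the second derivative of $a$.
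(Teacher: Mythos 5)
Your argument is correct, but it is genuinely different from the paper's proof of Lemma~\ref{Lem:Convex_energy}. The paper first replaces $a$ by $C^2$ approximations $a_n$ (even, positive, with $\sqrt{a_n}$ convex, built by mollifying $(\sqrt a)'$), then differentiates \eqref{h(t)} twice under the integral sign and shows $h''\geq 0$ by completing a square with the inequality $2a''/a\geq (a'/a)^2$. You instead prove convexity pointwise in $\lambda$: since $t\mapsto\rho^t(\lambda)$ and $t\mapsto(\rho^t)'(\lambda)$ are affine, the summand $a(\rho^t(\lambda))/(\rho^t)'(\lambda)$ is of the form $c(t)^2/Q(t)$ with $c=\sqrt a\circ\rho^{\,\cdot}(\lambda)$ nonnegative convex and $Q$ positive affine, and the three-point argument combining monotonicity of $x\mapsto x^2/Q$ on $[0,\infty)$ with the joint convexity of $(A,P)\mapsto A^2/P$ (the very fact the paper uses to get \eqref{convpar} in Lemma~\ref{Lem:Dirichlet}) gives convexity of each summand; integration in $\lambda$ then yields convexity of $h$. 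What your route buys is the elimination of both the regularization of $a$ and the second-derivative computation, using only midpoint-type convexity; in effect it makes Lemma~\ref{Lem:Convex_energy} as elementary as the paper's first proof of Lemma~\ref{Lem:Dirichlet}, and it would work for merely bounded measurable weights with $\sqrt a$ convex. The paper's computation, on the other hand, produces the explicit formulas for $h'$ and $h''$, which is what one would need for quantitative versions.

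One small inaccuracy to fix: $\rho$, hence $\rho^t$, need not be absolutely continuous when $v'$ vanishes on a set of positive measure (then $\rho$ maps a null set onto a set of positive measure), so you cannot justify the identity
\begin{equation*}
h(t)=\frac12\int_0^m\left\{\frac{a(\rho^t(\lambda))}{(\rho^t)'(\lambda)}+\frac{a(\rho^t(-\lambda))}{(\rho^t)'(-\lambda)}\right\}d\lambda
\end{equation*}
by a change of variables through an ``absolutely continuous bijection'' $\rho^t$. This is not a real gap: $v^t$ is Lipschitz (because $v$ is, and $\rho^t(\lambda_2)-\rho^t(\lambda_1)\geq(\lambda_2-\lambda_1)/\|v'\|_\infty$), so the area formula applied to $v^t$, together with the a.e.\ relation $\frac{dv^t}{dx}(\rho^t(\lambda))=1/(\rho^t)'(\lambda)$ valid off a fixed null set of $\lambda$ independent of $t$, yields exactly the displayed identity; this is also the identity \eqref{integ1} that the paper itself invokes without proof. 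The rest of your bookkeeping (a.e.\ existence, finiteness and the uniform lower bound $(\rho^t)'\geq 1/\|v'\|_\infty$, and the fact that the exceptional $\lambda$-set can be chosen independently of $t$) is sound.
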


\begin{proof}
Note that there is a sequence of even positive functions $a_n\in C^2((-L,L))$
such that $\sqrt{a_n}$ is convex and $a_n$ tends to $a$ in $L^\infty((-L,L))$.
Consider now the function $h_n$ defined by \eqref{def:ht} with $a$ replaced by $a_n$.
If we show that $h_n$ is convex then, taking into account
\eqref{integ1} and letting $n\to\infty$, we will deduce that $h$ is convex. Therefore,
in order to prove the lemma, we can assume that $a\in C^2((-L,L))$ without loss of generality.

Let us show the existence of $a_n$ as above.
Note that $\sqrt{a}$ is differentiable a.e. since it is convex. Moreover,
$(\sqrt{a})'=a'/(2\sqrt{a})$ is nondecreasing and nonnegative in $(0,L)$, and it belongs to $L^1((0,L))$.
Using an standard convolution argument we see that there exists a sequence of
increasing positive functions $q_n\in C^\infty((-L,L))$ such that $q_n(0)=0$
and $q_n$ tends to $a'/(2\sqrt{a})$ in $L^1((0,L))$. Defining
$$
a_n(x)=\left(\sqrt{a(0)}+\int_0^x q_n(t)\,dt\right)^2\quad\textrm{for }x\in[0,L]
$$
and $a_n(x)=a_n(-x)$ for $x\in[-L,0]$ we obtain the desired sequence.

Thus, we may assume $a\in C^2((-L,L))$. Noting that
\begin{equation}\label{h(t)}
  h(t)= \frac{1}{2}\int_{-L}^L  \left( \frac{d v^t}{dx } \right)^2 \! a(x) \, dx
   = \frac{1}{2}\int_{-m}^m \frac{a(\rho^t)(\lambda)}{(\rho^t)'(\lambda)}\ d\lambda,
\end{equation}
a simple computation shows that
$$
   h'(t) =
   \frac{1}{2}\int_{-m}^{m} \left\{a'(\rho^t)\frac{\rho^1-\rho^0}{(\rho^t)'}
   -
   a (\rho^t)\frac{(\rho^1)' - (\rho^0)'}{((\rho^t)')^2}
   \right\}\, d\lambda.
$$
%

Moreover since $\sqrt{a}$ is convex, and hence $2a''/a\geq (a'/a)^2$,
we obtain
\begin{eqnarray*}
   h''(t)
   &=&
   \int_{-m}^{m} \frac{a(\rho^t)((\rho^1)' - (\rho^0)')^2}{4((\rho^t)')^3}
   \left\{
   2\frac{a''(\rho^t)}{a(\rho^t)}\frac{( \rho^1 - \rho^0 )^2}{((\rho^1)' - (\rho^0)')^2}((\rho^t)')^2\right.\\
   && \hspace{4.5cm} -
   \left.
   4  \frac{a ' (\rho^t)}{a(\rho^t)}\frac{\rho^1-\rho^0}{(\rho^1)' - (\rho^0)'}(\rho^t)'
   +
   4
   \right\}\, d\lambda\\
   &\geq&
   \int_{-m}^{m} \frac{a(\rho^t)((\rho^1)' - (\rho^0)')^2}{4((\rho^t)')^3}
   \left\{
   \frac{a ' (\rho^t)}{a(\rho^t)}\frac{\rho^1-\rho^0}{(\rho^1)' - (\rho^0)'}(\rho^t)'-2
   \right\}^2\, d\lambda
\end{eqnarray*}
which is clearly nonnegative.
\end{proof}

\begin{Remark}\label{Rmk:26}
We claim that Lemma~\ref{Lem:Dirichlet} can be deduced from Lemma~\ref{Lem:Convex_energy}.

First, making a regularization argument we can assume that $a\in C^2((-L,L))$; see the proof of
Lemma~\ref{Lem:Convex_energy}.
To prove the claim, since $h$ is a convex function such that $h'(1/2)=0$, it is clear
that $h$ is nonincreasing in $(0,1/2)$ and nondecreasing in $(1/2,1)$. In particular,
$h(1/2)\leq h(t) \leq h(0)=h(1)$ for all $t\in[0,1]$.

We want to show that either $h(t)\equiv h(1)$ or that $h(t)<h(1)$ for all $t\in(0,1)$.
Assume $h\not\equiv h(1)$ and that there exist $t_0\in(0,1)$ such that
$h(t_0)=h(1)$. Note that $h'(t_0)=0$ since $h$ is a $C^1$ function
such that $h(t) \leq h(0)=h(1)$ for all $t\in[0,1]$.
Now, since $h$ is convex and $h'(t_0)=0$, we deduce that $h(t)\geq h(t_0)=h(1)$ for all
$t\in [0,1]$. Since $h\leq h(1)$, this a contradiction with $h\not\equiv h(1)$.
\end{Remark}

Now, we prove Theorem~\ref{thm:OddRearrangement} and Remark~\ref{rem:OddRearrangement}.
\begin{proof}[Proof of Theorem~{\rm \ref{thm:OddRearrangement}}]
Thanks to the diffeomorphism $\gamma_2$ defined in \eqref{eq:ChangeVariable}, we may assume
the new weights to be $\tilde{a}$ and $\tilde{b}\equiv{1}$ being $\sqrt{\tilde{a}}$ a convex function.

(a) Let us prove that an increasing function $v\in C^1([-L,L)]$ satisfying $v(L)=-v(-L)=m>0$
and its continuous rearrangement $\{v^t\}_{0\leq t\leq 1}$ satisfy \eqref{equidistributed}.
Indeed, on the one hand
$$
\int_{-L}^L G(v)\ dx=\int_{-m}^m G(\lambda)\rho'(\lambda)\ d\lambda
=\int_0^m G(\lambda)(\rho'(\lambda)+\rho'(-\lambda))\ d\lambda.
$$
On the other hand,
\begin{eqnarray*}
\int_{-L}^L G(v^t)\ dx
&=&
\int_{-m}^m G(\lambda)(\rho^t)'(\lambda)\ d\lambda\\
&=&
t\int_{-m}^m G(\lambda)\rho'(\lambda)\ d\lambda
+
(1-t)\int_{-m}^m G(\lambda)\rho'(-\lambda)\ d\lambda\\
&=&
\int_0^m G(\lambda)(\rho'(\lambda)+\rho'(-\lambda))\ d\lambda.
\end{eqnarray*}

(b) By Lemma~\ref{Lem:Dirichlet} we only need to prove that
$t\longmapsto\mathcal{E}(v^t,(-L,L))$ is a convex function, \textit{i.e.}, part (b.3).
This follows directly from Lemma~\ref{Lem:Convex_energy} since by part (a) we have
$$
\mathcal{E} (v^{t}, (-L,L))=h(t)+\int_{-L}^L G(v^t)\ dx=h(t)+\int_{-L}^L G(v)\ dx
$$
for all $t\in[0,1]$.
\end{proof}

With the above rearrangement we can now prove Theorem~\ref{thm:Brahms1},
\textit{i.e.}, that increasing critical points
of $\mathcal{E} (\cdot, (-L,L))$ in $H^1_m((-L,L))$ are odd under assumption~\eqref{eq:BerNiren}.
As mentioned in the Introduction, this argument applies to every locally Lipschitz
$G\in C^{0,1}_{\rm loc}(\mathbb{R})$,
not only to $G\in C^{1,1}_{\rm loc}(\mathbb{R})$ as in Theorem~\ref{thm:BerestyckiNirenberg}.

\begin{proof}[Proof of Theorem~{\rm \ref{thm:Brahms1}}]
Let $u\in H^1_m((-L,L))$ be an increasing critical point of
$\mathcal{E}(\cdot,(-L,L))$. Using the change of variable $\gamma_2$ given
in~\eqref{eq:ChangeVariable}, we can work with the equivalent
problem~\eqref{eq:E-LBis} with weights $\tilde{a}=(a b) \circ \gamma_2^{-1}$
and $\tilde b \equiv 1$, whose associated functional is given by
$$
   \tilde{\mathcal{E}} (v, (-{\tilde L},{\tilde L}))
   :=\int_{-\tilde{L}}^{\tilde{L}} \left\{\frac{1}{2} v'(y)^2\tilde{a}(y)+G(v)\right\} dy .
$$
We note that $\tilde{a}$ is even and that, by~\eqref{eq:BerNiren}, we have
that $\sqrt {\tilde a}$ is convex. Furthermore
critical points of $\tilde{\mathcal{E}} (\cdot, {\tilde I})$ and $\mathcal{E} (\cdot, I)$
are related by ${\tilde u} = u \circ \gamma_2^{-1} $ and, since $\gamma_2$ is increasing,
we deduce that the critical point ${\tilde u}$ is also increasing.

We also note that $\tilde{u}\in C^1([-L,L])$. In fact, since $\tilde{u}\in
H^1_m((-L,L))$ (in particular $\tilde{u}\in L^\infty((-L,L))$) and
$G\in C^{0,1}_{\rm loc}(\mathbb{R})$ then $-(\tilde{a}\tilde{u}')' = f(\tilde{u})
\in L^\infty((-L,L))$. Therefore, $\tilde{a}\tilde{u}'\in H^1((-L,L))$, and hence,
$\tilde{u}'\in C^0([-L,L])$.

Let $\{\tilde{u}^t\}_{0\leq t\leq 1}$ be the continuous odd rearrangement
of $\tilde{u}$ and let $h$ be defined in \eqref{claim1} (with $v^t$ and $a$ replaced by $\tilde{u}^t$
and $\tilde{a}$, respectively).
By Lemma~\ref{Lem:Dirichlet} it holds either that
$h(t)<h(1)$ for all $t\in(0,1)$ or that $\tilde{u}^t=\tilde{u}$
for all $t\in[0,1]$.

Assume that $h(t)<h(1)$ for all $t\in(0,1)$. We claim that,
since $\tilde{u}$ is a solution of the associated Euler-Lagrange equation,
it holds that $h'(1)=0$. Indeed, noting that $(\tilde{u}^t)'>0$ since $u$ is
increasing, we have that $\tilde{\rho}^t$ tends to
$\tilde{\rho}=\tilde{\rho}^1$ as $t$ goes to $1$ in $C^1$.
It follows that $\tilde{u}^t$ also tends to $\tilde{u}^1=\tilde{u}$
in $C^1$ as $t$ goes to $1$.  As a consequence, since $\tilde{u}$ is a solution of the
Euler-Lagrange equation and the potential energy is constant in $t$, we deduce that $h'(1)=0$.
We now obtain a contradiction with the convexity of $h$ (given by Lemma~\ref{Lem:Convex_energy})
noting that
$$
h'(1)\geq \frac{h(1)-h(t)}{1-t} >0 \quad \textrm{for all }t\in(0,1).
$$

Therefore $\tilde{u}=\tilde{u}^t$ for all $t\in[0,1]$.
In particular, $\tilde{u}$ is an odd function, as well as
$u={\tilde u}\circ \gamma_2$ is.
\end{proof}

\begin{Remark}
In order to derive the main property~\eqref{eq:PolyaSzego}, our odd rearrangement
has been defined on the subset of {\it increasing} functions in $H^1_m (I)$. One may
wonder if there exists a more general map
$
   R: H^1_m (I) \to H^{as}_m (I),
$
where $H^{as}_m (I)$ is the subspace of $H^1_m (I)$ formed by antisymmetric functions,
satisfying
\begin{equation} \label{eq:Volare}
   R_{|H_m^{as}(I)} = {\rm Id},
   \quad\textrm{and}\quad
   \mathcal{E} (u, I) \geq \mathcal{E} (R(u) , I).
\end{equation}
However, even under the assumption~\eqref{eq:BerNiren}, it is in general
impossible to find such a map defined in the entire functional space $H^1_m (I)$.
Indeed, in Section~\ref{section5} (see Proposition~\ref{prop:NonOdd}) we
will see that, when $a \equiv b \equiv 1$, for large interval
$I$ and $m$ small enough the minimizers of $\mathcal{E} (\cdot, I)$ in $H^1_m(I)$
cannot be odd (and neither nonincreasing). Hence in this case such a map $R$ cannot exist,
since \eqref{eq:Volare}  would imply that  $R(u) $ is  an odd minimizer.
\end{Remark}

\section{Monotonicity of solutions. Proof of Theorem~\ref{thm:Increasing}}\label{sec:Monotonicity}

As stated in Theorem~\ref{thm:BerestyckiNirenberg}, Berestycki and Nirenberg established the uniqueness and
monotonicity of solutions of \eqref{E-L} under the assumptions that the weight
$a\equiv b\in C^1$ and the potential $G\in C^{1,1}_{\rm loc}$ are even functions, $a$
is log-convex, and the \textit{a priori} estimate on the solution $|u|\leq m$.
The goal of this section is to prove Theorem~\ref{thm:Increasing},
providing weaker conditions on the weights $a$ and $b$ (in particular, no convexity
assumption on them) to ensure the monotonicity
of solutions for the one-dimensional problem~\eqref{E-L}, at the price of
assuming some structural conditions on the potential $G$.

In the proof of Theorem~\ref{thm:Increasing} we use the function
\begin{equation} \label{eq:Lyapunov}
   \mathcal{H} (x, q,p) := \frac{1}{2} \big( a(x) p)^2  -  a(x) b(x) G(q) ,
\end{equation}
defined in $I \times \mathbb R^2$ (\textit{i.e.}, in the extended phase space).
Given a solution $u$ to \eqref{E-L}, we easily see that
\begin{equation} \label{eq:LaVita}
   \frac{d}{dx} \mathcal{H} (\cdot, u, u') = - (ab)' G(u) .
\end{equation}
Indeed, multiplying equation \eqref{E-L} by $au'$, we obtain
$$
  - \frac{1}{2} \big[(au')^2 \big]' + ab \big[ G(u) \big]'   =  0 ,
$$
which is equivalent to
$$
  \left\{ - \frac{1}{2}(au')^2 + ab G(u) \right\}' - (a b)' G(u)   =  0 ,
$$
and this last relation is exactly~\eqref{eq:LaVita}.
In the special case where $a$ and $b$ are constant, the function $\mathcal{H}$ is the Hamiltonian
associated to the ODE in~\eqref{E-L}.

\begin{proof}[Proof of Theorem~{\rm \ref{thm:Increasing}}] {\bf (i)}
Assume \eqref{eq:Muffin}. Let us show that any solution of \eqref{E-L} is increasing
assuming $G\geq G(-m)=G(m)$ in $\mathbb{R}$. By adding a constant to $G$, we may assume
that $G\geq G(-m)=G(m)=0$ in $\mathbb{R}$.

Given a solution $u$ of~\eqref{E-L},
let us consider in the extended phase space the associated `trajectory'
$\varphi(x) := (x, u(x), u'(x))$, $x \in (-L,L)$.
Equality~\eqref{eq:LaVita} together with the assumptions \eqref{eq:Muffin}
and $G \geq 0$ yield
\begin{equation} \label{eq:Volem}
  \frac{d}{dx} (\mathcal{H} \circ \varphi) (x) \left\{
  \begin{array}{rl}
  \geq 0   &\hbox{ for } x\in(-L,x_0],
  \vspace{1mm} \\
  \leq 0   &\hbox{ for } x\in[x_0,L).
  \end{array}
  \right.
\end{equation}
Hence $\mathcal{H} \circ \varphi$ is nondecreasing in $(-L,x_0)$ and nonincreasing in $(x_0,L)$.
It follows that
\begin{eqnarray*}
  \mathcal{H}(\varphi(x))
  &\geq&
  \min\left\{ \mathcal{H}(\varphi(-L)), \mathcal{H}(\varphi(L))\right\}
  \\
  &=&
  \frac{1}{2}
  \min\left\{ (au')^2 (-L), (au')^2 (L) \right\}
  \\
  &>& 0,
\end{eqnarray*}
where we have used $G(\pm m) = 0$, $a>0$, and $u' (\pm L) \not = 0$
(which follows from the uniqueness to the Cauchy problem and the fact that
$G'(\pm m)=0$). We conclude
$$
  \frac{1}{2}(a u')^2> a b G(u)\geq 0
  \quad \hbox{ in } [-L,L].
$$
In particular, $u'(x)> 0$ for all $x\in [-L,L]$.

{\bf (ii)} Now we prove that any solution $u$ of \eqref{E-L} is increasing
if \eqref{eq:Double:bis} holds.

By replacing $G$ by $G-G(M)$ in the equation \eqref{E-L},
we can assume without loss of generality that $G\geq 0$ in $[-M,M]$.
Since
$$
   G'(s) \leq 0  \, \textrm{ for all } s\leq -M
   \quad\textrm{and}\quad
   G'(s) \geq 0 \, \textrm{ for all } s \geq M
$$
for some constant $M$, it is easy to prove using the maximum principle that
any solution of \eqref{E-L} satisfies
\begin{equation} \label{eq:SoonOrLater}
    |u| \leq \max \{ M, m \}.
\end{equation}
In particular, since $m \geq M$, the {\it a priori} bound~\eqref{apriori}, $|u|\leq m$, holds.

Note that \eqref{eq:Volem} also holds now, since $G\geq 0$ in $\mathbb{R}$.

Moreover, by assumption \eqref{eq:Double:bis} and the maximum principle any solution
$u$ to \eqref{E-L}, with $m \geq M >0$, satisfies $-m\leq u\leq m$ (see
\eqref{eq:SoonOrLater}).

Assume by contradiction that $u$ admits a local maximum $x_1\in(-L,L)$ and a local
minimum $x_2\in(-L,L)$ satisfying $x_1<x_2$ and $\alpha_1:=u(x_1)>u(x_2)=:\alpha_2$.

First, we claim that $\alpha_1\in[-m,M]$ and $\alpha_2\in[-M,m]$. Indeed,
for instance, if $\alpha_2\in[-m,-M)$ then, by equation \eqref{E-L} and condition
\eqref{eq:Double:bis}, we get $a(x_2)u''(x_2)=b(x_2)G'(u(x_2))
\leq 0$. Therefore, using that $x_2$ is a local minimum, we have $u''(x_2)\geq 0$,
and hence, $G'(u(x_2))=-f(u(x_2))=0$. We obtain a contradiction by uniqueness of the
Cauchy problem
$$
\left\{
\begin{array}{rcl}
-(aw')'&=&bf(w),\\
w(x_2)&=&u(x_2),\\
w'(x_2)&=&0,
\end{array}
\right.
$$
by noting that $u$ and $w\equiv u(x_2)$ are two different solutions.
Thus, we have $\alpha_2\in[-M,m]$. Analogously, we obtain $\alpha_1\in[-m,M]$,
proving the claim.

Therefore, $\alpha_1\in(-M,M]$ and $\alpha_2\in[-M,M)$ (remember that
$\alpha_2<\alpha_1$).
Finally, choose $\bar{x}_1 < x_1 < x_2 < \bar{x}_2$ such that $u(\bar{x}_1)=-M$
and $u(\bar{x}_2)=M$. Since $\mathcal{H}(\varphi(\bar{x}_i))> 0$ (note that $G'(\pm M)=0$
since $G\geq G(\pm M)=0$ in $\mathbb{R}$, and thus $u'(\bar{x}_i)\neq 0$ by uniqueness)
and $\mathcal{H}(\varphi(x_i))\leq 0$ for $i=1,2$, we obtain a contradiction with
\eqref{eq:Volem}. This proves that $u$ is increasing.
\end{proof}


\begin{Remark} \label{rem:MonAme}
Let us emphasize that~\eqref{eq:BerNiren} is more restrictive than condition \eqref{eq:Muffin}
in Theorem~\ref{thm:Increasing}.
Indeed, assume that $a$ and $b$ satisfy~\eqref{eq:BerNiren}. Then there are three possible cases:
\begin{enumerate}
\item[(i)]
$(ab)'>0$ in $(-L,L)$. Then we can take $x_0=-L$ in \eqref{eq:Muffin}.
\item[(ii)]
$(ab)'<0$ in $(-L,L)$. Then we can take $x_0=L$ in \eqref{eq:Muffin}.
\item[(iii)]
There exists $x_0\in (-L,L)$ such that $(ab)'(x_0)=0$. Since
$$
  (ab)' = 2 \sqrt{ab} \, (\sqrt{ab}\, )' = 2 \sqrt{ab}\, b \, \frac{(\sqrt{ab}\, )'}{b}
$$
and $(\sqrt{ab}\, )'/b$ is nondecreasing in $(-L,L)$, it follows that $(ab)'\leq 0$
in $(-L,x_0)$ and $(ab)'\geq 0$ in $(x_0,L)$.
\end{enumerate}
As a consequence, if assumption \eqref{eq:BerNiren} holds then \eqref{eq:Muffin} also holds.
\end{Remark}


If in addition we assume $f=-G'$ to be concave in $(0,m)$ we obtain the following comparison
result between the derivatives of an increasing minimizer $u$ and its flipped $u_\star$.
We include it here even that we will not use it in the rest of the paper.
\begin{Proposition}
Assume \eqref{eq:A}, $G\in C^{1,1}_{\rm loc}(\mathbb{R})$, and $a \equiv b\in C^2$.
Let $u\in H^1_m(I)$ be an increasing minimizer of $\mathcal{E} (\cdot, I)$ and
$u_\star$ its flipped. Assume $u(0)>0$. If $f=-G'\in C^1$
is concave in $(0,m)$, then $u_\star'(x)>u'(x)$ for all $x\in(0,L]$.
\end{Proposition}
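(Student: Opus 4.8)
The plan is as follows. Since $\mathcal{E}(u_\star,I)=\mathcal{E}(u,I)$ under \eqref{eq:A}, the flipped $u_\star$ is also a minimizer, hence a classical solution of \eqref{E-L}; as $u_\star'(x)=u'(-x)$ it is increasing as well, and $u\in C^3$ by the usual bootstrap (here $a\in C^2$ and $f\in C^1$ are used). Because $u(0)>0>-u(0)=u_\star(0)$, the trichotomy of Lemma~\ref{lem:OderedMin} forces $u>u_\star$ throughout $I$. I would set $w:=u-u_\star$, so $w>0$ in $I$ and $w(\pm L)=0$; writing $f(u)-f(u_\star)=c_1(x)\,w$ with $c_1:=\int_0^1 f'(tu+(1-t)u_\star)\,dt$ bounded, $w$ solves the linear equation $-(aw')'=a\,c_1\,w$ in $I$, so the Hopf boundary point lemma at $x=-L$ gives $w'(-L)>0$, i.e. $u'(-L)>u'(L)$. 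Since $u_\star'(L)=u'(-L)$, this already settles the statement at $x=L$; it also forces $u'(-L)>0$, and, combined with the strong maximum principle applied to $u'$ (see below), $u'>0$ on $[-L,0]$, hence $\psi_\star:=u_\star'=u'(-\cdot)>0$ on $[0,L]$.

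Next I would compare $u'$ and $u_\star'$ through the linearized equation. Differentiating $-u''-(\log a)'u'=f(u)$ shows that $\psi:=u'$ solves $-(a\psi')'=a\big[(\log a)''+f'(u)\big]\psi$ in $I$ --- whence $\psi>0$ in $I$ by the strong maximum principle, since $\psi\ge0$ and $\psi\not\equiv0$ --- and, $a$ being even, $\psi_\star=u_\star'$ solves $-(a\psi_\star')'=a\big[(\log a)''+f'(u_\star)\big]\psi_\star$. Subtracting and regrouping, $v:=u_\star'-u'$ satisfies $\mathcal{L}_\star v=g$ in $I$, where $\mathcal{L}_\star\phi:=-(a\phi')'-a\big[(\log a)''+f'(u_\star)\big]\phi$ and $g:=a\,\big(f'(u_\star)-f'(u)\big)\,\psi$. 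We already know $v(0)=0$ (since $u_\star'(0)=u'(0)$) and $v(L)=u'(-L)-u'(L)>0$, so it remains to prove $v>0$ on $(0,L]$.

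The crucial step, which I expect to be the main obstacle, is the sign $g\ge0$ on $(0,L)$. Fix $x\in(0,L)$. From $u>u_\star$ one has $u(x)>u_\star(x)=-u(-x)$, and from the monotonicity of $u$ together with $x>-x$ one has $u(x)>u(-x)$; hence $u(x)>|u(-x)|=|u_\star(x)|$. As $u$ is increasing with $u(0)>0$ and $u(L)=m$, we get $u(x)\in(0,m)$ and $|u_\star(x)|\in[0,m)$. Since $f=-G'$ is odd (because $G$ is even), $f'$ is even; since $f$ is concave on $(0,m)$, $f'$ is non-increasing on $[0,m]$; hence $f'(u(x))\le f'(|u_\star(x)|)=f'(u_\star(x))$. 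With $a>0$ and $\psi=u'\ge0$ this yields $g(x)\ge0$. Note that this step combines the ordering $u>u_\star$ with the monotonicity of $u$ to produce the pointwise comparison $u>|u_\star|$, and that no convexity assumption on $a$ is used.

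Finally I would run a maximum principle for $\mathcal{L}_\star$ on $(0,L)$, overcoming the indefinite sign of its zeroth-order coefficient $-a[(\log a)''+f'(u_\star)]$ by using $\psi_\star=u_\star'$ --- a \emph{positive} solution of $\mathcal{L}_\star\psi_\star=0$ on $[0,L]$ --- as a multiplier: the substitution $v=\psi_\star z$ turns $\mathcal{L}_\star v=g$ into $z''+p(x)z'=-g/(a\psi_\star)\le0$ in $(0,L)$, with $p:=(2a\psi_\star'+a'\psi_\star)/(a\psi_\star)$ continuous, an equation to which the classical maximum principle applies. Since $z(0)=v(0)/\psi_\star(0)=0$ and $z(L)=v(L)/\psi_\star(L)>0$ (as $u'(-L)>u'(L)$), we obtain $z\ge0$, hence $v\ge0$, on $[0,L]$. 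Moreover $v\not\equiv0$, for otherwise $u_\star'\equiv u'$ on $[0,L]$, so $u_\star\equiv u$ there (they agree at $L$), forcing $u(0)=u_\star(0)=-u(0)=0$, a contradiction. The strong maximum principle then gives $z>0$ in $(0,L)$, and together with $z(L)>0$ we conclude $v=u_\star'-u'>0$ on $(0,L]$, which is the assertion.
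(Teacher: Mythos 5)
Your argument is correct, and its skeleton is the one the paper uses: order $u>u_\star$ via Lemma~\ref{lem:OderedMin}, observe that $u'$ and $u_\star'$ solve the linearized equation at $u$ and at $u_\star$ respectively, use the oddness and concavity of $f$ together with $|u_\star|<u$ in $(0,L)$ to give the difference $u_\star'-u'$ a one-signed right-hand side, and conclude with a maximum principle on $(0,L)$. Where you genuinely diverge is in the two technical devices. First, to run the maximum principle for an operator whose zeroth-order coefficient $(\log a)''+f'(u_\star)$ has no sign, the paper invokes Berestycki--Nirenberg--Varadhan: since $L_uu'=0$ with $u'>0$ in $[0,L)$, the principal eigenvalue $\lambda_1(L_u,(0,L))$ is positive, hence the maximum principle holds; you instead perform the ground-state substitution $v=\psi_\star z$ with $\psi_\star=u_\star'$, which kills the zeroth-order term and lets you use the classical minimum principle --- this is more self-contained (no citation of \cite{BNV}), at the price of needing $\psi_\star>0$ on the \emph{closed} interval $[0,L]$ and $u_\star\in C^3$ (both of which you do secure, via the bootstrap and via your Hopf step at $-L$). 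Second, for the strict inequality at the endpoint $x=L$ the paper argues by uniqueness of the Cauchy problem at $L$ (if $u'(L)=u_\star'(L)$ then $u\equiv u_\star$, contradicting $u(0)>0$), while you get it from Hopf's lemma at $x=-L$ applied to $w=u-u_\star$; note only that since the coefficient $c_1$ in $-(aw')'=a\,c_1 w$ has no sign, you should add the standard remark that Hopf applies because the boundary minimum value is zero (equivalently, replace $c_1$ by $-c_1^-$ and use $w\geq0$), a one-line fix. Both routes deliver the same conclusion; yours is slightly longer but avoids the principal-eigenvalue machinery, and the paper's Cauchy-uniqueness trick is the quicker way to handle the endpoint.
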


\begin{proof}
Let $u$ be an increasing minimizer and let $u_\star(x)=-u(-x)$, $x\in[-L,L]$.
Since $u(0)>0$, by minimality we see that $u_\star<u$ in $(-L,L)$ (see
Lemma~\ref{lem:OderedMin}).

Let $L_u$ be the linear operator defined by
$L_u\varphi:=\varphi''+(\log a)'\varphi'+(f'(u)+(\log a)'')\varphi$
and note that $L_u u'=L_{u_\star}u_\star'=0$ in $(-L,L)$. This can be easily obtained
differentiating \eqref{E-L} and using that $u$ and $u_\star$ are solutions of this equation.
In particular, using the assumption that $f$ is concave in $(0,m)$ and odd,
and noting that $|u_\star|<u$ in $(0,L)$ since $u_\star <u$ and $u$ is increasing, we obtain
$$
L_u(u_\star'-u')=L_u u_\star'=(f'(u)-f'(|u_\star|))u_\star' \leq 0\quad \textrm{in }(0,L).
$$
Moreover, noting that $L_u u'=0$ and $u'>0$ in $[0,L)$ we obtain that
the first Dirichlet eigenvalue $\lambda_1(L_u,(0,L))$ $>0$ (see Corollary~2.4 and Theorem~1.1 in \cite{BNV}).
Since $u_\star\leq u$ with equality at $x=L$, we deduce $(u_\star'-u')(L)\geq 0$.
Hence, since $\lambda_1(L_u,(0,L))>0$, by \cite{BNV} we can apply the maximum
principle (and later the strong maximum principle) to
$$
\left\{
\begin{array}{l}
L_u(u_\star'-u')\leq 0\quad\textrm{in }(0,L),\\
(u_\star'-u')(0)=0,\quad (u_\star'-u')(L)\geq 0,
\end{array}
\right.
$$
to obtain $u_\star'>u'$ in $(0,L)$. Finally, the fact that $u_\star'(L)>u'(L)$
easily follows by contradiction using the uniqueness for the Cauchy
problem
$$
\left\{
\begin{array}{l}
-(aw')'=bf(w),
\\
w(L)=u(L),\ w'(L)=u'(L).
\end{array}
\right.
$$
\end{proof}

\section{Uniqueness in dimension one}\label{section4}

In this section we give sufficient conditions on the weights $a$ and $b$, and
on the potential $G$, to guarantee uniqueness of solution to~\eqref{E-L}.
We start by proving the following result. When $a\equiv b$ this is exactly
Theorem~\ref{thm:Brahms2}.

\begin{Proposition} \label{prop:Salva}
Assume that~\eqref{eq:A} holds and $G\in C^{1,1}_{\rm loc}(\mathbb{R})$.
Let $L$ and $m$ be positive numbers. Suppose further that
$$
   (ab)' \geq 0 \quad \textrm{in } (0,L),
   \quad
   G \geq G(m) \quad \textrm{in } \mathbb (0,\infty),
   \quad\textrm{and}\quad
   G' \leq 0 \quad \textrm{in } (0,m).
$$
Then, problem \eqref{E-L} admits a unique solution, which is therefore odd.
Furthermore, this solution is increasing.
\end{Proposition}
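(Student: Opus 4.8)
Since the proposition allows general, possibly unequal, weights $a,b$, I would work directly with the Hamiltonian $\mathcal{H}$ of \eqref{eq:Lyapunov} and the identity \eqref{eq:LaVita}, $\frac{d}{dx}\mathcal{H}(\cdot,u,u')=-(ab)'G(u)$, rather than first reducing to $a\equiv b$. Replacing $G$ by $G-G(m)$ I may assume $G(m)=0$; then \eqref{eq:A} together with $G\geq G(m)$ on $(0,\infty)$ gives $G\geq 0$ on $\mathbb{R}$, and since $\pm m$ are interior minimum points of $G$ also $G'(\pm m)=0$, i.e.\ $f(\pm m)=0$. Existence of a solution is standard: $\mathcal{E}(\cdot,(-L,L))$ is coercive and weakly lower semicontinuous on $H^1_m((-L,L))$ (because $G\geq 0$ and $a,b$ are bounded below by a positive constant), so it has a minimizer, which by the regularity comments after Lemma~\ref{lem:OderedMin} solves \eqref{E-L} classically. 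Thus the statement reduces to proving (i) every solution is increasing, and (ii) \eqref{E-L} has at most one solution: once (ii) holds, the flip $u_\star$ of \eqref{def:flipped} is again a solution (the weights are even and $f$ is odd), so $u=u_\star$, i.e.\ $u$ is odd.

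For (i) I would run the phase–space argument behind Theorem~\ref{thm:Increasing}(i), with $x_0=0$. Along a solution $u$, by \eqref{eq:LaVita} the map $x\mapsto\mathcal{H}(x,u(x),u'(x))$ has derivative $-(ab)'G(u)$, which is $\leq 0$ on $(0,L)$ and $\geq 0$ on $(-L,0)$ because $(ab)'$ is odd and nonnegative on $(0,L)$ while $G(u)\geq 0$; hence this map increases on $(-L,0)$ and decreases on $(0,L)$, so its minimum over $[-L,L]$ equals $\min\{\mathcal{H}(-L,-m,u'(-L)),\mathcal{H}(L,m,u'(L))\}=\tfrac12 a(L)^2\min\{u'(-L)^2,u'(L)^2\}$, using $G(\pm m)=0$ and the evenness of $a$. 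This is strictly positive: if say $u'(L)=0$, then $f(m)=0$ makes the constant $m$ a solution sharing Cauchy data with $u$ at $x=L$, so $u\equiv m$ by ODE uniqueness, contradicting $u(-L)=-m$. Hence $\tfrac12(au')^2=\mathcal{H}(\cdot,u,u')+ab\,G(u)>0$ on $[-L,L]$, so $u'$ never vanishes and $u'>0$ since $u(-L)<u(L)$. In particular $-m\leq u\leq m$ everywhere (the a priori bound comes for free), whence $f(u)\geq 0$ where $u>0$ and $f(u)\leq 0$ where $u<0$.

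Part (ii) is the crux. Let $u_1,u_2$ be two solutions; by (i) each $u_i$ is an increasing diffeomorphism of $[-L,L]$ onto $[-m,m]$ with a single zero $z_i$ at which $(au_i')'=-b f(u_i)$ changes sign, so $a u_i'$ is nondecreasing on $(-L,z_i)$ and nonincreasing on $(z_i,L)$. I would pass to the value variable $\lambda=u_i(x)$: writing $\rho_i=u_i^{-1}$ and letting $Q_i(\lambda)$ be $\big(au_i'\big)^2$ evaluated where $u_i=\lambda$, the identity \eqref{eq:LaVita} yields the first-order system
\[
  Q_i'(\lambda)=2\,(ab)\big(\rho_i(\lambda)\big)\,G'(\lambda),\qquad
  \rho_i'(\lambda)=\frac{a(\rho_i(\lambda))}{\sqrt{Q_i(\lambda)}},\qquad
  \rho_i(\pm m)=\pm L .
\]
If $u_1\not\equiv u_2$, then by uniqueness for the Cauchy problem their graphs meet only transversally, so at a common value $\lambda^*$ the two Hamiltonian levels differ, whereas at $\lambda=\pm m$ both levels equal $\tfrac12 a(L)^2 u_i'(\pm L)^2$. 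The plan is to propagate the strict inequality out to $\lambda=\pm m$ by a sign analysis carried out separately on the two sides of $0$, using all three hypotheses — the sign of $(ab)'$ from \eqref{ab:inc}, the fact that $G$ is even and nonincreasing on $[0,m]$ (from $G'\leq 0$ on $(0,m)$), and the unimodality about $z_i$ of both $x\mapsto a(x)u_i'(x)$ and $x\mapsto\mathcal{H}(x,u_i,u_i')$ — and to deduce in particular that every zero $z_i$ must lie at $x=0$. Once $u_i(0)=0$, Cauchy uniqueness against the flip shows each $u_i$ is odd, and then the system above with $\rho_i$ odd and $\rho_i(\pm m)=\pm L$ leaves at most one value of $Q_i(0)$ (by a monotonicity-in-$Q_i(0)$ argument for the shooting quantity $\rho_i(m)-\rho_i(-m)$), so $u_1\equiv u_2$. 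The hard part is precisely this sign bookkeeping: the coupling between $\rho_i$ and $Q_i$, which disappears only when $a$ and $b$ are constant, is what forces the three structural hypotheses to be used simultaneously, and is where I expect the real work to lie. Combining (i) and (ii) and the evenness of the weights gives $u=u_\star$, so the unique solution is odd, and it is increasing by (i).
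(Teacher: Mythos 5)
Your existence and monotonicity steps are correct and coincide with the paper's: existence by minimization, monotonicity by re-running Theorem~\ref{thm:Increasing}~(i) with $x_0=0$ after normalizing $G(m)=0$ (and noting that evenness extends $G\geq G(m)$ to all of $\mathbb{R}$, so $G'(\pm m)=0$), and the reduction of oddness to uniqueness via the flip. The problem is part (ii), the uniqueness itself: what you give there is a program, not a proof, and you say so yourself (``the hard part is precisely this sign bookkeeping \ldots where I expect the real work to lie''). Two concrete gaps: first, the claim that at some common value $\lambda^*$ ``the two Hamiltonian levels differ'' does not follow from transversality of crossings --- the levels are evaluated at different points $\rho_1(\lambda^*)\neq\rho_2(\lambda^*)$ in general, and nothing forces them to differ, nor do you explain how a discrepancy would propagate to $\lambda=\pm m$; second, the key assertion that every solution must vanish at $x=0$ is exactly the step you defer to the unperformed ``sign analysis,'' and without it the rest of your scheme (oddness, then shooting in $Q_i(0)$) never starts.

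The paper closes precisely these two points by short integral arguments based on \eqref{eq:LaVita}, using ingredients you already had. (a) To show $u(0)=0$: if (after possibly swapping $u$ and $u_\star$) $u(0)>0$, let $L_0$ be the first point of $(0,L]$ where $u$ meets its flip $u_\star$, and integrate \eqref{eq:LaVita} over $(-L_0,L_0)$; evenness of $a$, $b$, $G$ and $u(L_0)=u_\star(L_0)$ give
\[
\frac{a(L_0)^2}{2}\bigl(u'(L_0)^2-u_\star'(L_0)^2\bigr)+\int_0^{L_0}(ab)'\bigl(G(u)-G(u_\star)\bigr)\,dx=0 ,
\]
where the first term is strictly negative (Hopf/Cauchy uniqueness at $L_0$, since $u_\star<u$ on $(0,L_0)$ and both are increasing), while $|u_\star|\leq u\leq m$ on $(0,L_0)$ together with $G$ even and nonincreasing on $[0,m]$ makes the integrand nonpositive; contradiction. (b) Given two solutions with $u_1(0)=u_2(0)=0$, subtract their identities and integrate over $(0,L_1)$, with $L_1$ the first positive zero of $u_2-u_1$; Hopf at $0$ and at $L_1$ plus $G$ nonincreasing on $(0,m)$ make all three resulting terms nonpositive and one strictly negative, again a contradiction, so $u_1\equiv u_2$ and in particular $u=u_\star$ is odd. (Incidentally, your final shooting step for odd solutions can be made rigorous --- a touching-point comparison of $(\rho_i,Q_i)$ uses only that $ab$ is nondecreasing, not monotonicity of $a$ alone --- but as written it is merely asserted, and it becomes unnecessary once (a) and (b) are done.)
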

\begin{proof}
The existence of minimizer, and thus of solution, is standard. Indeed,
since $a>0$ and $G\geq 0$ in $\mathbb{R}$, for a minimizing sequence we will have
$\int_I |u_{k}'|^2  \leq C$
for some constant $C$ independent of $k$. Now, let $z_k \in I$ be a zero of $u_{k}$.
We have $    |u_{m_k} (x) | \leq \left|\int_{z_k}^x |u'_{m_k}| \right|
    \leq
    (2 L)^{1/2} \left( \int_I |u_{m_k}'|^2 \right)^{1/2}
    \leq C
$
for any $x\in I$.
It follows that $u_{k}$ converges (up to a subsequence) weakly in $H^1 (I)$
and strongly in $C^0 (\overline{I})$ to some $u\in H^1_m(I)$, which will be a minimizer
(and hence a solution).

Next, let us show uniqueness of solution. Let $u$ be a solution of \eqref{E-L} and $u_\star(x)=-u(-x)$ its flipped.
By Theorem~\ref{thm:Increasing}~(i), used with $x_0=0$, we may assume that both
$u$ and $u_\star$ are increasing solutions of \eqref{E-L} and, changing $u$ by $u_\star$ if necessary, that
$u(0)\geq u_\star(0)$.

First, we claim that $u(0) = 0$. Indeed, assume by contradiction that $u(0) > 0$
and set
$$
  L_0:=\min\left\{ x\in (0,L]: u(x)=u_\star(x)\right\}.
$$
Note that $L_0 >0$ and that $u$ and $u_\star$ solve
$$
\left\{
\begin{array}{l}
  -(au')'=b f(u)\qquad \textrm{in }(-L_0,L_0),\\
  u(L_0)=-u( - L_0).
\end{array}
\right.
$$
Moreover, since $u_\star<u$ in $(0,L_0)$ and both $u$ and $u_\star$
are increasing, we have $u'(L_0)^2<u_\star'(L_0)^2$ (by uniqueness for the Cauchy problem,
or by Hopf's lemma).

Integrating~\eqref{eq:LaVita} in $(-L_0,L_0)$ we obtain
$$
\frac{a(L_0)^2}{2}\left(u'(L_0)^2-u_\star'(L_0)^2\right)
+\int_{0}^{L_0}(a b )'\Big(G(u )-G(u_\star)\Big)\ dx=0.
$$
Finally, using that $G$ is nonincreasing in $(0,m)$ we deduce that the
integrand of the previous integral is nonpositive, obtaining a contradiction.
Hence $u(0)=0$, proving the claim.


Now assume that problem~\eqref{E-L} admits two solutions $u_1$ and $u_2$ with $u_2 - u_1 \not \equiv 0$.
We know by the previous argument that $u_1(0) = u_2 (0) = 0$. Let $L_1 \in (0,L]$ be the first positive zero of
the function $u_2 - u_1$. We can assume, without loss of generality, that
$$
   (u_2 - u_1) (0) = (u_2 - u_1) (L_1) = 0
   \quad \hbox{ and }\quad
   u_2 -u_1 > 0 \hbox{ in } (0,L_1).
$$
The Hopf Lemma leads to
\begin{equation} \label{eq:Tarragona1}
  u_2'(0) > u_1'(0) > 0
  \quad \hbox{ and } \quad
  0 < u_2'(L_1) < u_1'(L_1),
\end{equation}
since we have proved that every solution is increasing.

Subtracting identity \eqref{eq:LaVita} for $u_1$ and $u_2$ and
integrating in $(0, L_1)$ we get
\begin{eqnarray}
   & &\hspace{-2cm} \frac{a(L_1)^2}{2}   \big(u_2'(L_1)^2  - u_1'(L_1)^2  \big)
   -
    \frac{a(0)^2}{2} \big( u_2'(0)^2 - u_1'(0)^2    \big)
    \nonumber \\
   &+& \int_{0}^{L_1}(a b )'\Big\{ G(u_2 )-G(u_1)  \Big\} \, dx=0 .
\label{eq:Tarragona2}
\end{eqnarray}
Using~\eqref{eq:Tarragona1} and the fact that $G$ is nonincreasing in $(0,m)$,
we reach a contradiction as before. Therefore, $u_1 \equiv u_2$.

In particular, since $u$ and $u_\star(x)=-u(-x)$ are solutions of \eqref{E-L}  we
obtain that $u=u_\star$, \textit{i.e.}, $u$ is odd.
\end{proof}

The following result was established by Berestycki and Nirenberg
in~\cite{BeresNiren}. We give here an alternative proof (which, however, also uses their sliding method).
Note that here $a$, $b$, and $G$ need not be even.

\begin{Proposition}[\cite{BeresNiren}] \label{prop:BerNir}
Assume $m>0$, $a,b\in C^1([-L,L])$ such that $a,b>0$, and $G\in C^{1,1}_{\rm loc}(\mathbb{R})$.
If $(\sqrt{ab}\,)'/b$ is nondecreasing in $(-L,L)$, then problem \eqref{E-L}
admits at most one increasing solution.
\end{Proposition}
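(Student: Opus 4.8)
The plan is to run Berestycki and Nirenberg's sliding method, but organized around the change of variables already set up in Section~\ref{subsec2:2}. First I would use the diffeomorphism $\gamma_1$ from \eqref{eq:ChangeVariable} to reduce to the case $a\equiv b$, so that the equation becomes the non-divergence form \eqref{nondiv}, namely $-u''-(\log a)'u'=f(u)$ on the new interval $(-\tilde L,\tilde L)$; by \eqref{newexp} the hypothesis that $(\sqrt{ab}\,)'/b$ is nondecreasing translates into $(\log a)'$ being nondecreasing. Since $\gamma_1$ is an increasing diffeomorphism, an increasing solution stays increasing, so it suffices to prove uniqueness of increasing solutions for \eqref{nondiv} with $(\log a)'$ nondecreasing. (I would also note that an increasing solution automatically satisfies $-m\le u\le m$, so the boundary behavior is controlled.)

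Next, given two increasing solutions $u$ and $v$, I would consider, for $\tau\ge 0$, the shifted function $u^\tau(x):=u(x-\tau)$ restricted to the overlap interval, and compare $u^\tau$ with $v$. The key point is that because $(\log a)'$ is nondecreasing, the shifted function $u^\tau$ is a \emph{supersolution} of the equation satisfied by $v$ on the overlap: indeed $-(u^\tau)''-(\log a)'(x)(u^\tau)'=f(u^\tau)+\big[(\log a)'(x-\tau)-(\log a)'(x)\big](u^\tau)'$, and the bracket is $\le 0$ while $(u^\tau)'>0$, so the extra term is $\le 0$ in the right direction. One then slides $\tau$ down from a large value (where $u^\tau\ge v$ trivially on the overlap, since $u$ is increasing and bounded by $m$ while $v\le m$) to $\tau=0$. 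A maximum principle / strong maximum principle argument on the (shrinking) overlap interval, together with the Hopf lemma at the endpoints, shows the comparison $u^\tau\ge v$ is preserved down to the smallest $\tau$ for which it holds; at that critical $\tau$ either an interior touching point forces $u^\tau\equiv v$ (hence $\tau=0$ by the boundary conditions), or the critical $\tau$ is $0$ outright. Running the same argument with the roles of $u$ and $v$ reversed (shifting $v$) yields $u\ge v$ and $v\ge u$, hence $u\equiv v$.

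A cleaner alternative, which I would actually prefer to write, avoids comparing shifts and instead uses the Hamiltonian identity \eqref{eq:LaVita}: for an increasing solution, $\frac{d}{dx}\mathcal H(x,u,u')=-(ab)'G(u)$, and one parametrizes the trajectory by $u$ rather than $x$. Writing $p(u)=u'$ along the solution and $x=\rho(u)$ for its inverse, the equation becomes a first-order ODE for $\frac12(a(\rho)p)^2$ as a function of the level $u$, driven by a term involving $(ab)'(\rho(u))$; the monotonicity of $(\log a)'$ (equivalently of $(\sqrt{ab}\,)'/b$) gives a one-sided differential inequality showing that if two increasing solutions agree at a single level they must coincide, and the common boundary data $u(\pm L)=\pm m$ pins down the level at the endpoints. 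I expect the main obstacle to be the bookkeeping at the moving endpoints of the overlap interval in the sliding argument—specifically, ensuring the Hopf lemma applies (which needs $u'(\pm L)>0$, true by uniqueness for the Cauchy problem since $f$ is Lipschitz and $u$ is increasing) and that the strong maximum principle can be invoked on the possibly-degenerating overlap; the sign of the perturbation term coming from $(\log a)'(x-\tau)-(\log a)'(x)$ is what makes everything close, and verifying that sign carefully is the crux.
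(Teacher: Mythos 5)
Your overall strategy is the paper's: reduce to $a\equiv b$ via $\gamma_1$ from \eqref{eq:ChangeVariable} and \eqref{newexp}, so that the hypothesis becomes monotonicity of $(\log a)'$ in \eqref{nondiv}, and then run the sliding method comparing a translate of one increasing solution with the other. However, as written your sliding step fails because of an orientation error. With your choice $u^\tau(x):=u(x-\tau)$, $\tau>0$, on the overlap $(-L+\tau,L)$ one has $u^\tau(-L+\tau)=u(-L)=-m<v(-L+\tau)$ (and $u^\tau(L)=u(L-\tau)<m=v(L)$), so the starting configuration ``$u^\tau\ge v$ on the overlap for large $\tau$'' never holds for any $\tau>0$: in fact for $\tau$ near $2L$ the translate is close to $-m$ while $v$ is close to $m$, so the inequality is the reverse one. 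Moreover, the identity you compute, with the bracket $\bigl[(\log a)'(x-\tau)-(\log a)'(x)\bigr](u^\tau)'\le 0$, shows that $u^\tau$ is a \emph{sub}solution of the equation (you call it a supersolution), and the maximum principle can only propagate the comparison ``solution above subsolution'', i.e.\ $v\ge u^\tau$, not $u^\tau\ge v$: at an interior touching point $w:=u^\tau-v\ge 0$ would be a nonnegative subsolution of the linearized equation vanishing at an interior minimum, and the strong maximum principle gives no conclusion there. The repair is easy and standard: either translate the other way, taking $u^\tau(x):=u(x+\tau)$ on $(-L,L-\tau)$, which by the same computation (now with bracket $\ge 0$) is a supersolution dominating $v$ near $\tau=2L$, and slide $\tau\downarrow 0$ to get $u\ge v$, then exchange $u$ and $v$; or argue by contradiction as the paper does, assuming $u>v$ near $L$ and sliding with the set $\{\tau\in[0,2L):\, v>u_\tau \hbox{ on the overlap}\}$, using that its infimum $\tau_0$ would be positive and that strictness at the overlap's endpoints (from $u$ increasing and $\tau_0>0$) pushes the touching point inside, where the strong maximum principle forces $v\equiv u_{\tau_0}$, a contradiction.

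Your ``cleaner alternative'' via the Hamiltonian identity \eqref{eq:LaVita} is not substantiated and I do not believe it closes under the present hypothesis. In the paper that identity yields uniqueness only under the different assumptions of Proposition~\ref{prop:Salva} (namely $(ab)'\ge 0$ together with sign conditions on $G'$ on $(0,m)$), where the integrated term $\int (ab)'\{G(u_2)-G(u_1)\}$ has a sign; under the convexity-type hypothesis \eqref{eq:BerNiren} alone, $(ab)'$ may change sign and $G$ need not be monotone, so the ``one-sided differential inequality in the level variable'' you invoke is not available without further argument. If you want a single write-up, keep the sliding proof with the corrected orientation.
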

\begin{proof}
Let $u$ be a solution of \eqref{E-L} and $\tilde{u}=u\circ\gamma_1^{-1}$,
where $\gamma_1$ is the diffeomorphism defined in \eqref{eq:ChangeVariable}.
Under this change of variables, the monotonicity of solutions is
preserved and the condition~\eqref{eq:BerNiren}, for the new weights
$\tilde{a}=\tilde{b}=\sqrt{ab}\circ\gamma_1^{-1}$, turns out to be equivalent to
the ${\rm log}$ convexity of $\tilde{a}$, \textit{i.e.},
$\tilde{a}'/\tilde{a}$ is nondecreasing in $(-L,L)$.


Thus, without loss of generality we may prove our statement for weights
$a \equiv b \in C^1 ([-L,L])$ satisfying that
$$
  \frac{a'}{a} \textrm{ is nondecreasing in }(-L,L).
$$

Let $u$ and $v$ be two increasing solutions to problem~\eqref{E-L},
and assume \textit{ab absurdo}  that
\begin{equation} \label{eq:clock}
u>v  \quad \hbox{ in } (L-\varepsilon, L)
\end{equation}
for some $\varepsilon >0$.  Consider the family of functions $(u_{\tau})_{\tau \in [0,2L)}$ defined as
$$
  u_{\tau} : I_{\tau} \to \mathbb R,
  \qquad
  x \mapsto u(x - \tau)
$$
on the interval $I_{\tau} := (-L + \tau, L)$.
Using the assumption that $a'/a$ is nondecreasing and $u' \geq 0$, we immediately see that
$$
    \frac{a'}{a} (x) u'_{\tau} (x) \geq   \frac{a'}{a} (x- \tau) u'_{\tau} (x)
    \quad
    \textrm{for all }x \in I_{\tau},
$$
and therefore
$$
  - u_{\tau}''(x) - \frac{a'}{a} (x) u'_{\tau} (x) + G'(u_{\tau})  \, \leq  \, 0
  \quad \hbox{ in } I_{\tau},
$$
\textit{i.e.}, $u_{\tau}$ is a subsolution in $I_\tau$ of the ODE in \eqref{E-L}.

Define
$$
   T:= \left\{ \tau \in [0,2L) \, \colon \, v - u_{\tau} >0 \textrm{ in } I_\tau \right\},
   \quad
   \tau_0 := \inf T ,
$$
and note that:
\begin{enumerate}
\item[(i)]
$T \not = \emptyset$. Indeed, since $u(-L) = -m$ and $v(L) =m$, we deduce
that values $\tau$ close to $2L$ belong to the set $T$. Thus, $\tau_0$ is well
defined, and by~\eqref{eq:clock} we have $\tau_0 > 0$.
\item[(ii)]
$v - u_{\tau_0} \geq 0$  in $I_{\tau_0}$ and $(v - u_{\tau_0}) (x_{\tau_0}) = 0$
for some $x_{\tau_0} \in \overline{I}_{\tau_0}$.
However, since $u$ is increasing and $\tau_0  > 0$, on the boundary of $I_{\tau_0}$ we have
$$
   u_{\tau_0} (L) = u(L - \tau_0) < m = v(L)
$$
$$
   u_{\tau_0} (-L + \tau_0) = u(-L) = - m < v(-L + \tau_0) \,.
$$
Therefore $x_{\tau_0} \not \in \partial I_{\tau_0}$.
Finally, applying the strong maximum principle on the interval $I_{\tau_0}$
(recall that $v$ is a solution and $u_{\tau_0}$ a subsolution of the nonlinear
problem), we derive a contradiction.
\end{enumerate}
\end{proof}

\begin{Remark} \label{cor:Newark}
Let $a,b \in C^1([-L,L])$ be positive functions
satisfying~\eqref{eq:BerNiren}, and $G\in C^{1,1}_{\rm loc}(\mathbb{R})$ be such
that~\eqref{eq:Double:bis} holds for some $M\in(0,m]$. Then
Theorem~\ref{thm:Increasing}~(ii) and Proposition~\ref{prop:BerNir} show that
the functional $\mathcal{E} (\cdot, (-L,L))$ admits a unique critical point in $H^1_m((-L,L))$
for any $m \geq M>0$, which is increasing (a result already stated in~\cite{BeresNiren}).
\end{Remark}

In the following corollary, $a$, $b$, and $G$ need not be even.
\begin{Corollary} \label{thm:Uniqueness}
Let $m>0$, $a,b \in C^1 ([-L,L])$ with $a,b>0$, and $G\in C^{1,1}_{\rm loc}(\mathbb{R})$. If
%
$$
    G(s) \geq G(-m)=G(m) \quad \textrm{for all } s \in \mathbb R
$$
and $\big( \sqrt{a b } \, \big)'/b$ is nondecreasing in $(-L,L)$. Then
the functional $\mathcal{E} (\cdot, (-L,L))$ admits a unique critical
point in $H^1_m((-L,L))$.
\end{Corollary}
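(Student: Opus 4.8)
The plan is to assemble the statement from two results already established in this section: the monotonicity statement in Theorem~\ref{thm:Increasing}~(i) and the uniqueness-among-increasing-solutions statement in Proposition~\ref{prop:BerNir}. The first reduction I would make is to pass from ``critical point of $\mathcal{E}(\cdot,(-L,L))$ in $H^1_m((-L,L))$'' to ``classical solution of \eqref{E-L}'': a critical point satisfies the weak form of $-(au')'=bf(u)$, and since $u\in H^1(I)$ is continuous on $[-L,L]$ the right-hand side $bf(u)$ is continuous, so $au'\in C^1$ and, using $a\in C^1$ with $a>0$, $u\in C^2((-L,L))$; conversely every solution of \eqref{E-L} is a critical point. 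Hence it suffices to prove that \eqref{E-L} has exactly one solution.

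For existence I would invoke the direct method of the calculus of variations, exactly as in the proof of Proposition~\ref{prop:Salva}: after replacing $G$ by $G-G(m)\ge 0$ (allowed since $G(s)\ge G(m)=G(-m)$), one has $\mathcal{E}(u,I)\ge \tfrac12\,(\min_{[-L,L]}a)\int_I(u')^2\,dx$, so a minimizing sequence is bounded in $H^1(I)$ (using $u_k(-L)=-m$ to control the $L^\infty$, hence $L^2$, norm); passing to a subsequence converging weakly in $H^1(I)$ and strongly in $C^0(\overline{I})$, lower semicontinuity of the Dirichlet term together with uniform convergence and continuity of $G$ and $b$ for the potential term give a minimizer, which is a solution.

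For uniqueness I would argue in two steps. First, every solution of \eqref{E-L} is increasing: the hypothesis that $(\sqrt{ab}\,)'/b$ is nondecreasing is precisely \eqref{eq:BerNiren}, so by Remark~\ref{rem:MonAme} there is some $x_0\in[-L,L]$ for which the monotonicity condition \eqref{eq:Muffin} holds, and since in addition $G\ge G(-m)=G(m)$ on all of $\mathbb{R}$, Theorem~\ref{thm:Increasing}~(i) applies and yields that every solution of \eqref{E-L} with $m>0$ is increasing. Second, Proposition~\ref{prop:BerNir}, whose hypothesis is again exactly that $(\sqrt{ab}\,)'/b$ is nondecreasing, guarantees at most one increasing solution. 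Combining the two steps, \eqref{E-L} has at most one solution, and hence exactly one, which is the assertion.

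I do not expect a real obstacle here: the corollary is essentially a bookkeeping step that records how the single structural hypothesis ``$(\sqrt{ab}\,)'/b$ nondecreasing'' simultaneously triggers Remark~\ref{rem:MonAme} (hence Theorem~\ref{thm:Increasing}~(i)) and Proposition~\ref{prop:BerNir}, once one observes that critical points of $\mathcal{E}$ coincide with classical solutions of \eqref{E-L}. The only minor points requiring care are the routine regularity bootstrap and the standard compactness argument for existence, neither of which uses any evenness of $a$, $b$, $G$ (consistently with Theorem~\ref{thm:Increasing} and Proposition~\ref{prop:BerNir}, which do not require it).
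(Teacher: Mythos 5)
Your proposal is correct and follows essentially the same route as the paper: existence by the direct method as at the start of the proof of Proposition~\ref{prop:Salva}, then Remark~\ref{rem:MonAme} to pass from \eqref{eq:BerNiren} to \eqref{eq:Muffin}, Theorem~\ref{thm:Increasing}~(i) to conclude every solution is increasing, and Proposition~\ref{prop:BerNir} for uniqueness among increasing solutions. The only difference is that you spell out the regularity bootstrap identifying critical points with classical solutions of \eqref{E-L}, which the paper leaves implicit (it is recorded earlier, after Lemma~\ref{lem:OderedMin}).
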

\begin{proof}
The existence part is easily established, as in the beginning of the proof of Proposition~\ref{prop:Salva}.

Next, by Remark~\ref{rem:MonAme}, there exists $x_0\in[-L,L]$ such that \eqref{eq:Muffin}
holds. Therefore, by Theorem~\ref{thm:Increasing}~(i) any solution of \eqref{E-L}
is increasing. We conclude by applying the uniqueness result of increasing solutions
established in Proposition~\ref{prop:BerNir}.
\end{proof}

\section{Non-increasing and non-odd minimizers}\label{section5}

In this section we give conditions on the weights $a$ and $b$  for which
the minimizers of $\mathcal{E} (\cdot,(-L,L))$ in $H^1_m((-L,L))$ are either not increasing
or non-odd. Throughout this section we shall assume
\begin{equation} \label{eq:LicensePlate}
\left.
\begin{array}{c}
    a,b \in C^0(\mathbb R),
    \quad
    a, b \hbox{ even, }
    \quad
    a , b >0,
    \vspace{3mm} \\
    G \in C^0(\mathbb R),
    \quad
    G \hbox{ even, }
     \vspace{3mm} \\
    G(s) \geq G(M) = 0
    \hbox{ in } \mathbb R ,
    \quad
    G(s) > G(M) = 0
    \hbox{ in } [0,M)
   \end{array}
 \right\}
\end{equation}
for some $M >0$.


To estimate the energy value of a minimizer of $\mathcal{E}$,
we will need the following preliminary results.
\begin{Lemma}\label{Lemma3:5}
Let $a\in C^0([\alpha, \beta])$ be a positive function and $m_1,m_2
\in \mathbb{R}$. Then
$$
\min\left\{\int_\alpha^\beta a v'^2  : v\in C^1([\alpha, \beta]),
v(\alpha)=m_1, v(\beta)=m_2\right\}
=\frac{(m_2-m_1)^2}{{\int_\alpha^\beta 1/a}},
$$
and the minimum is achieved by
\begin{equation} \label{eq:Hunan}
   u(x)=(m_2-m_1)
   \frac{\int_\alpha^x 1/a }{\int_\alpha^\beta 1/a }+m_1  .
\end{equation}
\end{Lemma}
\begin{proof}
By Schwarz inequality
$$
\vert m_2-m_1 \vert =\left\vert\int_\alpha^\beta v'\right\vert \leq
\int_\alpha^\beta \sqrt{a} \vert v'\vert \frac{1}{\sqrt{a}} \leq
\left(\int_\alpha^\beta a v'^2\right)^{1/2}\left(\int_\alpha^\beta
\frac{1}{a}\right)^{1/2}.
$$
On the other hand, the minimization problem admits a unique
solution $u$ which solves
$$
\left\{
\begin{array}{l}
(a u')' = 0 \textrm{ in }(\alpha,\beta)
\\
u(\alpha) = m_1 ,\  u(\beta) = m_2 .
\end{array}
\right.
$$
We readily deduce that the solution of this Dirichlet problem is given
by~\eqref{eq:Hunan}, and a straightforward computation gives
$$
  \int_\alpha^\beta a u'^2 =
  \frac{(m_2-m_1)^2}{\int_\alpha^\beta 1/a} .
$$
\end{proof}

\begin{Proposition} \label{prop:XiMen}
Assume that \eqref{eq:LicensePlate} holds, that $m\geq 0$, and
let $ t \in [0,L)$. Then,
\begin{equation} \label{eq:XiMen}
   \inf_{v \in H^1_m ((-L,L))} \mathcal{E} (v, (-L,L))
   \, \leq \,
   \frac{M^2 + m^2 }{\int_{t}^L 1/a} + 2 \, G_1 \int_{t}^L b
\end{equation}
where $G_1 := \sup_{s \in (-m, \overline{M})} G(s) $ and
$\overline{M} = \max\{m,M\}$.
\end{Proposition}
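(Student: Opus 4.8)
The plan is to prove \eqref{eq:XiMen} by exhibiting one explicit competitor $v\in H^1_m((-L,L))$ whose energy is at most the right-hand side; since $\inf_{w\in H^1_m((-L,L))}\mathcal{E}(w,(-L,L))\le\mathcal{E}(v,(-L,L))$, this is enough. The idea is to ``park'' the competitor at the value $M$ on the central interval $[-t,t]$, where $G$ vanishes by \eqref{eq:LicensePlate}, so that the potential energy is zero there, and to pay the unavoidable kinetic and potential cost only on the two outer intervals $[-L,-t]$ and $[t,L]$.

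Concretely, I would set $v\equiv M$ on $[-t,t]$; on $[t,L]$ I would take $v$ to be the minimizer of $\int_t^L a\,(v')^2$ among $C^1$ functions with $v(t)=M$ and $v(L)=m$ given by Lemma~\ref{Lemma3:5}, that is $v(x)=M+(m-M)\big(\int_t^x 1/a\big)\big/\big(\int_t^L 1/a\big)$; and on $[-L,-t]$ I would take $v$ to be the corresponding minimizer with $v(-L)=-m$ and $v(-t)=M$. One checks immediately that $v$ is continuous (both outer pieces take the value $M$ at $\pm t$), that $v(\pm L)=\pm m$, and that $v$ is Lipschitz (since $a$ is continuous and positive on the compact interval $[-L,L]$, so $1/a$ is bounded), whence $v\in H^1_m((-L,L))$ is an admissible competitor. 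Note that $v$ takes values in $[\min(m,M),\max(m,M)]$ on $[t,L]$ and in $[-m,M]$ on $[-L,-t]$, both of which are subintervals of $[-m,\overline M]$, where $\overline M=\max\{m,M\}$.

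For the estimate, Lemma~\ref{Lemma3:5} gives $\int_t^L a\,(v')^2=(m-M)^2\big/\int_t^L 1/a$ and $\int_{-L}^{-t} a\,(v')^2=(M+m)^2\big/\int_{-L}^{-t} 1/a$, and since $a$ is even one has $\int_{-L}^{-t}1/a=\int_t^L 1/a$; hence the kinetic part $\tfrac12\int_{-L}^{L}a\,(v')^2$ equals $\tfrac12\big[(m-M)^2+(m+M)^2\big]\big/\int_t^L 1/a=(M^2+m^2)\big/\int_t^L 1/a$. For the potential part, the contribution of $[-t,t]$ vanishes because $G(M)=0$; on each outer interval one has $G(v)\le\sup_{[-m,\overline M]}G=\sup_{(-m,\overline M)}G=G_1$ (the two suprema agree since $G$ is continuous and $(-m,\overline M)$ is dense in its closure), while $b>0$, so $\int_t^L bG(v)\le G_1\int_t^L b$ and likewise $\int_{-L}^{-t} bG(v)\le G_1\int_{-L}^{-t} b=G_1\int_t^L b$ using that $b$ is even. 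Summing these bounds yields $\mathcal{E}(v,(-L,L))\le (M^2+m^2)/\int_t^L 1/a+2G_1\int_t^L b$, which is \eqref{eq:XiMen}. The case $t=0$ is included with no change, the central interval being then a single point.

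There is no substantial obstacle; the only points that require a little care are keeping the numerator exactly $M^2+m^2$ --- which relies both on the factor $\tfrac12$ in $\mathcal{E}$ and on using the evenness of $a$ to merge the two integrals $\int 1/a$ into one --- and the harmless observation that $G_1$, defined as a supremum over the \emph{open} interval $(-m,\overline M)$, coincides by continuity of $G$ with the supremum over its closure, so that the values taken by $v$ on the two outer intervals are indeed dominated by $G_1$.
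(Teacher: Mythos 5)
Your proposal is correct and coincides with the paper's own argument: the same piecewise competitor (equal to $M$ on $[-t,t]$ and given by the Lemma~\ref{Lemma3:5} minimizers on the two outer intervals), with the kinetic part computed via Lemma~\ref{Lemma3:5} and evenness of $a$, and the potential part bounded by $G_1$ using the monotonicity of the outer pieces and evenness of $b$. Your extra remark about the supremum over the open versus closed interval is a harmless refinement the paper leaves implicit.
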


\begin{proof}
Let $u_1$, respectively $u_2$, be the solution to the minimizing problem
$$
\min\left\{\int_{-L}^{-t} a u'^2  : u\in C^1([-L, - t]),
u(-L )= - m, \, u(-t)= M\right\},
$$
respectively,
$$
\min\left\{\int_t^L a u'^2  : u\in C^1([t, L]),
u(t)= M, \, u(L)=m\right\}.
$$
Consider the test function
$v \in H^1_m ((-L,L))$ defined by
$$
  v (x)=\left\{
  \begin{array}{ll}
  u_1
           &\mbox{ if }  -L  < x < - t ,
           \vspace{1mm}  \\
  M      &\mbox{ if }  - t \leq x \leq t,
           \vspace{1mm}  \\
  u_2   &\mbox{ if }  t  < x <  L.
\end{array}
\right.
$$
Since $G$ is even, $G(\pm M) = 0$, and the weights $a$ and $b$ are also even,  Lemma~\ref{Lemma3:5} gives
\begin{eqnarray*}
  \mathcal{E} (v, (-L,L))
  &=&
   \int_{-L}^{-t}\left\{ \frac{1}{2} a v'^2+ b G(v) \right\}+\int_t^L\left\{ \frac{1}{2} a v'^2+ b G(v) \right\}
  \\
  &=&
      \frac {(M +m)^2}{2 \int_{t}^L 1/a}+ \int_{-L}^{- t} b G(u_1)
  +  \frac {(m - M)^2}{2 \int_{t}^L 1/a} + \int_{t}^{L} b G(u_2)
  \\
  &\leq&
  \frac {M^2 + m^2}{\int_{t}^L 1/a} + 2 \sup_{s \in (-m, \overline{M})} G(s)  \int_{t}^L  b ,
\end{eqnarray*}
where we have used that both $u_1$ and $u_2$ are monotone functions, as follows from \eqref{eq:Hunan}.
\end{proof}

\subsection{Boundary perturbation of non-odd minimizers}

Recall the notation $I=(-L,L)$. We first show that the property for a minimizer
of $\mathcal{E} (\cdot, I)$ in $H^1_m(I)$ to be non-odd
is preserved under small perturbation of boundary data.

\begin{Proposition} \label{prop:CiPortera}
Assume that \eqref{eq:LicensePlate} holds.
Let $( u_{m_k})_{k=1}^{\infty}$ be a sequence of minimizers of
$\mathcal{E} (\cdot, I)$ in $H^1_{m_k}(I)$ with $0\leq m_k \rightarrow m$.
Then, up to a subsequence, we have
$$ u_{m_k} \to u_m
     \hbox{ in }  H^1(I)
$$ and $u_m$ is a minimizer of $\mathcal{E} (\cdot,I)$ in $H^1_m(I)$.
In particular $u_{m_k}\to u_m$ in $C^0(\overline{I})$.
\end{Proposition}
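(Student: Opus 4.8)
The plan is to run the direct method of the calculus of variations, handling the moving boundary data $m_k\to m$ by a linear correction. First I would establish uniform bounds. Testing the minimality of $u_{m_k}$ against the affine competitor $x\mapsto m_k x/L\in H^1_{m_k}(I)$, and using that $a$, $b$, $G$ are continuous on the relevant compact sets while $(m_k)$ is bounded, gives $\mathcal{E}(u_{m_k},I)\le C$ with $C$ independent of $k$. Since $a$ is continuous and strictly positive on $[-L,L]$, it is bounded below by a positive constant, so $\int_I (u_{m_k}')^2\le C'$; and since $u_{m_k}(L)=m_k$ is bounded, the fundamental theorem of calculus yields $\|u_{m_k}\|_{L^\infty(I)}\le |m_k|+(2L)^{1/2}\|u_{m_k}'\|_{L^2(I)}\le C''$. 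Hence $(u_{m_k})$ is bounded in $H^1(I)$.

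Next, by weak compactness in $H^1(I)$ together with the compact embedding $H^1(I)\hookrightarrow C^0(\overline I)$, a subsequence satisfies $u_{m_k}\rightharpoonup u_m$ weakly in $H^1(I)$ and $u_{m_k}\to u_m$ uniformly on $\overline I$; in particular $u_m(\pm L)=\lim_k u_{m_k}(\pm L)=\pm m$, so $u_m\in H^1_m(I)$. The functional is weakly lower semicontinuous: the kinetic part $v\mapsto\frac{1}{2}\int_I a(v')^2$ is convex and continuous on $H^1(I)$, hence weakly l.s.c., while $\int_I bG(u_{m_k})\to\int_I bG(u_m)$ because the $u_{m_k}$ converge uniformly and remain in a fixed compact set on which $G$ is continuous. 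Thus $\mathcal{E}(u_m,I)\le\liminf_k\mathcal{E}(u_{m_k},I)$.

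To identify $u_m$ as a minimizer, given any $w\in H^1_m(I)$ I would set $w_k(x):=w(x)+(m_k-m)x/L\in H^1_{m_k}(I)$; then $w_k\to w$ in $H^1(I)$ and $\mathcal{E}(w_k,I)\to\mathcal{E}(w,I)$ (convergence of the kinetic term since $(m_k-m)/L\to0$, of the potential term by dominated convergence). Minimality of $u_{m_k}$ gives $\mathcal{E}(u_{m_k},I)\le\mathcal{E}(w_k,I)$, so
\[
   \mathcal{E}(u_m,I)\ \le\ \liminf_k\mathcal{E}(u_{m_k},I)\ \le\ \lim_k\mathcal{E}(w_k,I)\ =\ \mathcal{E}(w,I),
\]
and since $w$ was arbitrary, $u_m$ is a minimizer of $\mathcal{E}(\cdot,I)$ in $H^1_m(I)$.

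Finally, for the strong convergence I would apply the previous step with $w=u_m$: this produces $w_k\to u_m$ in $H^1(I)$ with $\mathcal{E}(w_k,I)\to\mathcal{E}(u_m,I)$ and $\mathcal{E}(u_{m_k},I)\le\mathcal{E}(w_k,I)$, whence $\limsup_k\mathcal{E}(u_{m_k},I)\le\mathcal{E}(u_m,I)$; combined with the $\liminf$ inequality this forces $\mathcal{E}(u_{m_k},I)\to\mathcal{E}(u_m,I)$. Since the potential parts converge (uniform convergence, continuity of $G$), so do the weighted kinetic energies: $\int_I a(u_{m_k}')^2\to\int_I a(u_m')^2$. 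As $u_{m_k}'\rightharpoonup u_m'$ weakly in $L^2(I)$ and $a$ is bounded above and below, convergence of these (equivalent Hilbert) norms together with weak convergence gives $u_{m_k}'\to u_m'$ strongly in $L^2(I)$, hence $u_{m_k}\to u_m$ in $H^1(I)$ and a fortiori in $C^0(\overline I)$. The argument is entirely standard; the only steps requiring a little care are the construction of the boundary-adjusting recovery sequence $w_k$ (immediate here thanks to the linear correction) and the upgrade from weak to strong $H^1$ convergence, which is exactly where the two-sided energy comparison above is needed.
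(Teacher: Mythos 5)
Your proof is correct and follows essentially the same route as the paper's: uniform energy and $L^\infty$ bounds, weak $H^1$ plus uniform compactness, weak lower semicontinuity against the recovery sequence $w+(m_k-m)x/L$, and the two-sided energy comparison with $w=u_m$ to upgrade to strong $H^1$ convergence. The only (immaterial) differences are that you obtain the uniform energy bound from the affine competitor $m_kx/L$ rather than from the paper's Proposition~\ref{prop:XiMen}, and that you spell out the weak-convergence-plus-norm-convergence step that the paper leaves implicit.
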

\begin{proof}
Since $a>0$ and $G\geq 0$ in $\mathbb{R}$, the upper bound \eqref{eq:XiMen} used
with $t=0$ gives
$$
   \int_I |u_{m_k}'|^2  \leq C
$$
for some constant $C$ independent of $m_k$.
Moreover, for each $u_{m_k}\in H^1_{m_k}(I)$, let $z_k \in \overline{I}$ be a zero of $u_{m_k}$.
The fundamental theorem of calculus yields
$$
    |u_{m_k} (x) | \leq \left|\int_{z_k}^x |u'_{m_k}| \right|
    \leq
    (2 L)^{1/2} \left( \int_I |u_{m_k}'|^2 \right)^{1/2}
    \leq C
$$
for any $x\in I$.
It follows that $u_{m_k}$ converges (up to a subsequence) weakly in $H^1 (I)$
and strongly in $C^0 (\overline{I})$ to some $u_m\in H^1_m(I)$.

Let us now prove that $u_m$ is a minimizer of $\mathcal{E} (\cdot,I)$
in $H^1_m(I)$. Indeed, take an arbitrary function $u\in H^1_m(I)$ and consider the
sequence $v_k := u + (m_k - m) \frac{x}{L}$ in $H^1_{m_k} (I)$. Note that $v_k\rightarrow u$
in $H^1(I)$ and $\mathcal{E} (u_{m_k},I)\leq \mathcal{E} (v_k,I)$. Using that
$\mathcal{E} (\cdot,I)$ is weakly lower semicontinuous we conclude that
\begin{equation}\label{*****}
\mathcal{E} (u_m,I) \leq \limsup \mathcal{E} (u_{m_k},I)
\leq \limsup \mathcal{E} (v_k,I) = \mathcal{E} (u,I),
\end{equation}
proving that $u_m$ is a minimizer.

Finally, using \eqref{*****} with $u=u_m$ we deduce that $\limsup \mathcal{E} (u_{m_k},I)
=\mathcal{E} (u_m,I)$. Therefore, since $u_{m_k}$ converges weakly to $u_m$, we have that
in fact $u_{m_k}\rightarrow u_m$ in $H^1(I)$.
\end{proof}

We can now show that, under boundary perturbation,
the property of minimizers being non-odd is preserved.
\begin{Proposition} \label{prop:LaPazzia}
Let  \eqref{eq:LicensePlate} be satisfied, $G\in C^{1,1}_{\rm loc}(\mathbb{R})$,
and  assume that for some $m_0 \geq 0$,
all minimizers of $\mathcal{E} (\cdot, I)$ in $H^1_{m_0}(I)$ are non-odd.
Then, there exists $\varepsilon >0$ such that the functional
$\mathcal{E}(\cdot, I)$ admits non-odd minimizers in $H^1_m(I)$ for each
$m \in (m_0 - \varepsilon, m_0 + \varepsilon) \cap [0, \infty)$.
\end{Proposition}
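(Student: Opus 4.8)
The plan is to argue by contradiction, relying on the compactness of minimizers under boundary perturbation established in Proposition~\ref{prop:CiPortera} together with the elementary fact that oddness is preserved under uniform convergence.

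Suppose the conclusion fails. Then, taking $\varepsilon=1/k$ in the negation, there is a sequence $m_k\in[0,\infty)$ with $m_k\to m_0$ such that, for every $k$, the functional $\mathcal{E}(\cdot,I)$ has \emph{no} non-odd minimizer in $H^1_{m_k}(I)$; equivalently, every minimizer in $H^1_{m_k}(I)$ is odd. First I would recall that, under \eqref{eq:LicensePlate}, minimizers in $H^1_{m_k}(I)$ do exist: since $a>0$ and $G\ge 0$, the direct method applies exactly as at the beginning of the proof of Proposition~\ref{prop:Salva} (one uses the \textit{a priori} bound \eqref{eq:XiMen} with $t=0$ to get $\int_I |u'|^2\le C$, and then the location of a zero of $u$ yields a uniform $C^0$ bound). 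Pick for each $k$ a minimizer $u_{m_k}\in H^1_{m_k}(I)$; by the contradiction hypothesis, $u_{m_k}$ is odd, i.e. $u_{m_k}(-x)=-u_{m_k}(x)$ for all $x\in[-L,L]$.

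Next I would apply Proposition~\ref{prop:CiPortera} to the sequence $(u_{m_k})_k$: passing to a subsequence, $u_{m_k}\to u_{m_0}$ in $H^1(I)$ and in $C^0(\overline I)$, and $u_{m_0}$ is a minimizer of $\mathcal{E}(\cdot,I)$ in $H^1_{m_0}(I)$. The uniform convergence then lets oddness pass to the limit: for every $x\in[-L,L]$,
\[
   u_{m_0}(-x)=\lim_{k\to\infty}u_{m_k}(-x)=-\lim_{k\to\infty}u_{m_k}(x)=-u_{m_0}(x),
\]
so $u_{m_0}$ is an odd minimizer in $H^1_{m_0}(I)$. This contradicts the hypothesis that all minimizers of $\mathcal{E}(\cdot,I)$ in $H^1_{m_0}(I)$ are non-odd, and the proof is complete.

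There is no serious analytic obstacle here, since all the real work is already packaged in Proposition~\ref{prop:CiPortera}; the argument would also show, run in the other direction, that the set of levels $m\ge 0$ at which \emph{all} minimizers are non-odd is open, though this is not needed for the statement as phrased. The only point requiring a little care is the bookkeeping of quantifiers: the negation must be read as ``there is a sequence $m_k\to m_0$ at each of whose levels every minimizer is odd'', and one must remember that minimizers do exist at every level, so that the selected $u_{m_k}$ is genuinely odd and its limit genuinely contradicts the assumption at $m_0$.
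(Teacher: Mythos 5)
Your proof is correct and follows essentially the same route as the paper: in both, the whole weight of the argument is carried by the compactness of minimizers under boundary perturbation (Proposition~\ref{prop:CiPortera}), combined with the existence of minimizers at every level. The only (minor) difference is that you argue by contradiction and pass oddness of the approximating minimizers to the limit pointwise, whereas the paper argues directly, using Proposition~\ref{prop:Schubert}~(ii) to deduce $u_{m_0}(0)\neq 0$ from the non-oddness of the limiting minimizer and hence $u_{m_k}(0)\neq 0$ (so non-oddness) for $k$ large; your variant is equally valid and at this step even bypasses the need for Proposition~\ref{prop:Schubert} and the $C^{1,1}_{\rm loc}$ regularity of $G$.
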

\begin{proof}
Consider a sequence of minimizer $u_{m_k}\in H^1_{m_k}(I)$ of $\mathcal{E} (\cdot,I)$ with
$m_k \to m_0$ and $m_k \geq 0$. By Proposition~\ref{prop:CiPortera}, up to a subsequence,
the sequence $u_{m_k}$ converges strongly in $C^0(\overline{I})$ to a minimizer $u_{m_0}\in H^1_{m_0}(I)$ of
$\mathcal{E} (\cdot,I)$.
Since $u_{m_0} (0) \not = 0$ ($u_{m_0} $ is not odd and recall Proposition~\ref{prop:Schubert}~(ii)),
we deduce that $u_{m_k} (0) \neq 0$ for all $m_k$ close enough to $m_0$. Thus $u_{m_k}$ is not odd.
 \end{proof}

\subsection{Non-odd minimizer}

Note that the results of the previous subsection apply with $m=0$.
This allows to give sufficient conditions on $a$ and $b$ to guarantee that
minimizers for small odd boundary data are non-odd.
\begin{Proposition} \label{prop:NonOdd}
Assume  \eqref{eq:LicensePlate}, $G\in C^{1,1}_{\rm loc}(\mathbb{R})$,
and $G(s)\leq G(0)$ for all $s\in(0,M)$. If
\begin{equation} \label{eq:KindMuckenhoupt}
     \sup_{t \in (0, L)} \left( \int_{t}^{L} \frac{1}{a}  \right) \left( \int_{0}^t b \right) > \frac{M^2}{2 G(0)},
\end{equation}
then the following holds:
\begin{enumerate}
\item[{\rm (i)}]
If $m=0$ then the minimizers of $\mathcal{E} (\cdot, I)$ in $H^1_0(I)$ are not identically zero.
\item[{\rm (ii)}]
There exists $\varepsilon >0$ such that, for each $m\in[0,\varepsilon)$, the functional
$\mathcal{E} (\cdot, I)$ admits minimizers in $H^1_m(I)$ which are non-odd and not increasing.
\end{enumerate}
\end{Proposition}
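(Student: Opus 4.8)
The plan is to deduce the statement from the qualitative results on minimizers established earlier, the key analytic input being the upper energy bound of Proposition~\ref{prop:XiMen}. Throughout, the existence of a minimizer of $\mathcal{E}(\cdot,I)$ in $H^1_m(I)$ for each $m\ge 0$ is standard and is obtained exactly as in the beginning of the proof of Proposition~\ref{prop:Salva}.

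For part (i) I would first record that $G(0)>0$ (since $G(s)>G(M)=0$ for all $s\in[0,M)$ by \eqref{eq:LicensePlate}) and that the candidate $u\equiv 0\in H^1_0(I)$ has energy $\mathcal{E}(0,I)=\int_{-L}^L G(0)\,b(x)\,dx=2G(0)\int_0^L b$, using that $b$ is even. Then, by \eqref{eq:KindMuckenhoupt} I would pick $t\in(0,L)$ with $\big(\int_t^L 1/a\big)\big(\int_0^t b\big)>M^2/(2G(0))$, that is, $M^2/\big(\int_t^L 1/a\big)<2G(0)\int_0^t b$. Applying Proposition~\ref{prop:XiMen} with $m=0$ and this $t$, and using that the quantity $G_1=\sup_{s\in(0,M)}G(s)$ occurring there is $\le G(0)$ by the hypothesis $G(s)\le G(0)$ on $(0,M)$, I would get that any minimizer $u_0$ satisfies
$$
\mathcal{E}(u_0,I)\le \frac{M^2}{\int_t^L 1/a}+2G(0)\int_t^L b < 2G(0)\int_0^t b+2G(0)\int_t^L b=\mathcal{E}(0,I),
$$
so $u_0\not\equiv 0$, which is (i). Equivalently one can avoid quoting Proposition~\ref{prop:XiMen} and plug in directly the test function that equals $0$ at $\pm L$, rises optimally (in the sense of Lemma~\ref{Lemma3:5}) to $M$ on $[-L,-t]$, stays equal to $M$ on $[-t,t]$, and descends optimally to $0$ on $[t,L]$, using $G\le G(0)$ on $[0,M]$; the computation is precisely the one in the proof of Proposition~\ref{prop:XiMen}.

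For part (ii), the case $m=0$ is immediate: by part (i) every minimizer $u_0$ is $\not\equiv 0$, hence by Proposition~\ref{prop:Schubert}~(i) it is even and $|u_0|>0$ in $I$; thus $u_0(0)\ne 0$, so $u_0$ is non-odd by Proposition~\ref{prop:Schubert}~(ii), and since $u_0(-L)=u_0(L)=0\ne u_0(0)$ it is not increasing. For small $m>0$ I would argue by contradiction, in the spirit of the proof of Proposition~\ref{prop:LaPazzia}: if the conclusion failed, there would be a sequence $m_k\to 0^+$ along which every minimizer of $\mathcal{E}(\cdot,I)$ in $H^1_{m_k}(I)$ is odd or increasing. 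Choosing minimizers $u_{m_k}\in H^1_{m_k}(I)$, Proposition~\ref{prop:CiPortera} gives, along a subsequence, $u_{m_k}\to u_0$ in $C^0(\overline I)$ with $u_0$ a minimizer of the $m=0$ problem, so $c:=u_0(0)\ne 0$ by part (i) and Proposition~\ref{prop:Schubert}~(i). Then $u_{m_k}(0)\to c\ne 0$ forces $u_{m_k}$ to be non-odd for $k$ large (Proposition~\ref{prop:Schubert}~(ii)); and comparing $u_{m_k}(0)$ with the boundary value $u_{m_k}(L)=m_k$ when $c>0$ (then $u_{m_k}(0)-u_{m_k}(L)=u_{m_k}(0)-m_k\to c>0$, with $0<L$), respectively with $u_{m_k}(-L)=-m_k$ when $c<0$ (then $u_{m_k}(-L)-u_{m_k}(0)=-m_k-u_{m_k}(0)\to -c>0$, with $-L<0$), shows $u_{m_k}$ is not increasing for $k$ large — contradicting the choice of $m_k$. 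Hence there is $\varepsilon>0$ so that for every $m\in(0,\varepsilon)$ the functional $\mathcal{E}(\cdot,I)$ admits a non-odd, non-increasing minimizer, and combining with $m=0$ proves (ii) on $[0,\varepsilon)$.

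The only genuine computation is the energy comparison in part (i), and it is essentially already contained in Proposition~\ref{prop:XiMen}; the point that needs care is the bookkeeping of which value — $G(0)$ rather than $\sup_{(0,M)}G$ — controls the potential energy of the competitor, and this is exactly where the hypothesis $G(s)\le G(0)$ on $(0,M)$ is used. Everything else is a direct application of the compactness and perturbation machinery already in place (Propositions~\ref{prop:Schubert}, \ref{prop:CiPortera}, \ref{prop:LaPazzia}).
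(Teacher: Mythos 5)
Your proposal is correct and follows essentially the same route as the paper: part (i) is the energy comparison of Proposition~\ref{prop:XiMen} with $m=0$ against $\mathcal{E}(0,I)=2G(0)\int_0^L b$, using $G\le G(0)$ on $(0,M)$ to control $G_1$, and part (ii) combines Proposition~\ref{prop:Schubert} with the boundary-perturbation compactness of Proposition~\ref{prop:CiPortera} (the paper cites Proposition~\ref{prop:LaPazzia}, whose argument you essentially reproduce inline). Your explicit verification that the perturbed minimizers are also not increasing, via $u_{m_k}(0)\to u_0(0)\neq 0$ versus the boundary values $\pm m_k$, fills in a detail the paper leaves implicit, but it is not a different method.
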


Note that (ii) applies to the unweighted case $a\equiv b\equiv 1$ whenever $I=(-L,L)$ is
large enough and $G$ satisfies \eqref{eq:LicensePlate} for some $M$.

\begin{proof}[Proof of Proposition~{\rm \ref{prop:NonOdd}}]
{\bf (i)}
Assume $m=0$. Let us show that under condition~\eqref{eq:KindMuckenhoupt} we have
\begin{equation} \label{eq:UnaTonteriaMas}
   \inf_{v \in H^1_0 (I)} \mathcal{E} (v, I) < \mathcal{E} (0, I) .
\end{equation}
Indeed, by applying Proposition~\ref{prop:XiMen} with $m=0$, we have that
inequality~\eqref{eq:UnaTonteriaMas} holds if
\begin{equation} \label{eq:DosTonteriaMas}
   \frac{M^2}{\int_{t}^L 1/a} + 2 \, G (0) \int_{t}^L b  \, <  \,  2 G(0) \int_0^L b=\mathcal{E}(0,I)
\end{equation}
for some $t \in (0,L)$.
We obtain the conclusion, by noting that inequality~\eqref{eq:DosTonteriaMas} is equivalent to
$$
 \left( \int_{t}^{L} \frac{1}{a}  \right) \left( \int_{0}^t b \right) > \frac{M^2}{2 G(0)} .
$$
{\bf (ii)}
Let $u_0\in H^1_0(I)$ be a minimizer of $\mathcal{E} (\cdot, I)$. Since
$u_0 \not \equiv 0$ by part (i), we can assume $u_0> 0$ (see Proposition~\ref{prop:Schubert}~(i)),
and in that case $u_0'(L) < 0$. In particular, $u_0$ is non-odd and not nondecreasing. We conclude by applying Propositions~\ref{prop:CiPortera} and \ref{prop:LaPazzia} (see Figure~\ref{fig2}).
\end{proof}

\begin{Example}
Condition \eqref{eq:KindMuckenhoupt} holds true for $L$ large enough
for any positive and even function $a: \mathbb R \to \mathbb R$ satisfying, for instance,
$$
    \int_0^{+\infty} \frac{1}{a} = +\infty,
$$
independently of the weight $b$. In particular, it holds for $a \equiv 1$.

In this case, by Proposition~\ref{prop:NonOdd}, there exist $L_0>0$ and $\varepsilon > 0$ such that
the minimizers of $\mathcal{E}$ in $H^1_m((-L,L))$ are non-odd and not increasing
whenever $ L > L_0$ and $m \in [0, \varepsilon)$.
\end{Example}

The above result gives a class of weights for which minimizers are not
odd in large intervals and small boundary values.
To obtain similar results for ``large" boundary data (such as $u(\pm L)=\pm M$;
recall that we are assuming \eqref{eq:LicensePlate} and that $M=1$ in the
Allen-Cahn nonlinearity), we will look for conditions on the weights $a$ and $b$
to ensure
\begin{equation} \label{eq:Hubei}
   \inf_{u \in H^{as}_m (I)} \mathcal{E} (u, I)
   \, > \,
   \inf_{u \in H^{1}_m (I)} \mathcal{E} (u, I) .
\end{equation}
Recall that $H^{as}_m(I)$ is formed by those functions in $H^1_m(I)$ which are odd.

Note that Proposition~\ref{prop:XiMen} gives an upper bound for the
right hand-side value in~\eqref{eq:Hubei}.
The following proposition gives now a lower bound on $\min_{u \in H^{as}_m (I)} \mathcal{E} (u, I)$.

\begin{Proposition}\label{acotadas}
Assume that \eqref{eq:LicensePlate} holds and $m>0$.
Then, there exists a positive constant $C^{as}$ depending only on
$a$, $b$, $G$, $M$, and $m$ $($but independent of $L$$)$
such that
\begin{equation}  \label{eq:BellaCiao}
 \mathcal{E} (u, I) \geq C^{as} >0
 \quad \textrm{for all } u \in H^{as}_m (I), \text{ where } I=(-L,L) .
\end{equation}
Moreover, the constant $C^{as}$ can be chosen as
\begin{equation}\label{Cas}
   C^{as} := \inf_{t >0}
   \Big\{ \frac{m_0^2}{\int_0^t 1/a}
  +
  2 G_0 \int_0^t b  \Big\} ,
\end{equation}
where
\begin{equation}\label{m0_and_G0}
    m_0 :=  \frac{1}{2}\min\{m,M\}
    \quad \hbox{ and } \quad
    G_0 := \inf_{s \in (0, m_0)} G(s) .
\end{equation}
\end{Proposition}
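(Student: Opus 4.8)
The plan is to use only three features of an antisymmetric competitor $u\in H^{as}_m(I)$: that $u(0)=0$, that $|u|$ must eventually reach the intermediate value $m_0=\tfrac12\min\{m,M\}$, and that near the origin the kinetic and potential parts of $\mathcal{E}$ cannot both be small. First I would reduce to the half-interval $(0,L)$: since $a$, $b$, $G$ are even and $u(-x)=-u(x)$, we have $u'(-x)^2=u'(x)^2$ and $G(u(-x))=G(u(x))$, hence
$$
\mathcal{E}(u,I)=2\int_0^L\Big\{\tfrac12 a(u')^2+bG(u)\Big\}\,dx .
$$
Because $u(0)=0$ and $|u(L)|=m\ge 2m_0>m_0$, continuity of $u$ (functions in $H^1(I)$ are continuous) furnishes a first crossing time $t_0:=\inf\{x\in(0,L):|u(x)|=m_0\}$, which lies in $(0,L)$, with $|u|<m_0$ on $[0,t_0)$ and $|u(t_0)|=m_0$.

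On $(0,t_0)$ one has $u\in(-m_0,m_0)\subset(-M,M)$, so, since $G$ is even, continuous and strictly positive on the compact set $[0,m_0]\subset[0,M)$, the potential is bounded below: $G(u(x))\ge\min_{[0,m_0]}G=:G_0>0$, and continuity of $G$ shows this $G_0$ coincides with $\inf_{(0,m_0)}G$, the constant in \eqref{m0_and_G0}. I would then discard the nonnegative contribution of $(t_0,L)$ and apply the one–dimensional Cauchy–Schwarz bound of Lemma~\ref{Lemma3:5} on $(0,t_0)$ (valid for $H^1$ functions, as $|u(t_0)-u(0)|=m_0$), namely $\int_0^{t_0}a(u')^2\ge m_0^2/\int_0^{t_0}(1/a)$. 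Combining these two estimates gives
$$
\mathcal{E}(u,I)\ \ge\ \int_0^{t_0}a(u')^2+2G_0\int_0^{t_0}b\ \ge\ \frac{m_0^2}{\int_0^{t_0}1/a}+2G_0\int_0^{t_0}b\ \ge\ \inf_{t>0}\Big\{\frac{m_0^2}{\int_0^{t}1/a}+2G_0\int_0^{t}b\Big\}=C^{as}.
$$
The crucial point is that $t_0$ depends on $u$ and on $L$, but passing to the infimum over all $t>0$ removes both dependencies; and this infimum depends only on $a,b$ (as fixed functions on $\mathbb R$, by \eqref{eq:LicensePlate}), on $m_0$ (hence on $m,M$) and on $G_0$ (hence on $G,m,M$), so $C^{as}$ is indeed $L$–independent.

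It remains to verify $C^{as}>0$, which I regard as the only genuinely delicate step — together with the care needed, because $u$ is merely $H^1$ and need not be monotone, to isolate the interval $(0,t_0)$ on which $|u|$ is controlled (a naive inverse–function argument would not work here). Set $\phi(t):=m_0^2/\int_0^t(1/a)+2G_0\int_0^t b$. Using $a,b\in C^0(\mathbb R)$ with $a,b>0$, the integrals $\int_0^t(1/a)$ and $\int_0^t b$ are finite, continuous and strictly positive for $t>0$, so $\phi$ is continuous and positive on $(0,\infty)$; moreover $\phi(t)\to+\infty$ as $t\to 0^+$ because $\int_0^t(1/a)\to 0$, while for $t\ge 1$ we have $\phi(t)\ge 2G_0\int_0^1 b>0$. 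Hence $\inf_{t>0}\phi$ is attained on a compact subinterval of $(0,\infty)$ and is strictly positive, i.e. $C^{as}>0$, completing the argument.
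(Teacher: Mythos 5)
Your proof is correct and follows essentially the same route as the paper's: locate the first crossing of the level $m_0$, bound the kinetic term on that stretch by the Cauchy--Schwarz estimate of Lemma~\ref{Lemma3:5} and the potential term by $2G_0\int b$, then pass to the infimum over $t>0$, whose positivity is argued exactly as in the paper. The only (harmless) differences are that you work with an arbitrary $u\in H^{as}_m(I)$ rather than a minimizer of the antisymmetric class, and that your remark that the infimum of $\phi$ is ``attained'' on a compact subinterval is neither needed nor fully justified (the positivity bound you give already suffices).
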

\begin{proof}
Let $u \in H^{as}_m (I)$ be such that
$$
  \mathcal{E} (u,I) = \inf_{v \in H^{as}_m (I)} \mathcal{E} (v, I)\,.
$$

Since $u(0) =0$, we can choose $\beta \in (0,L)$ be such that
$$
  |u(\beta)|=  m_0
  \quad\textrm{and}\quad
  |u(x)| <  m_0
  \quad \textrm{for all } x \in (0, \beta)  \,.
$$
Setting $G_0 := \inf_{s \in (0, m_0)}  G(s)$ we note that
$G(u(x))\geq G_0>0$ for all $x\in[0,\beta]$. This inequality together with
Lemma~\ref{Lemma3:5}
(with boundary conditions $u(0) =0$, $u(\beta) = m_0$ or $u(\beta) = -m_0$) yield
\begin{eqnarray}
  \mathcal{E} (u, I)
  &\geq&
  \int_{-\beta}^\beta \Big\{ \frac{1}{2} a u'^2+  b G(u) \Big\}
  =
  \int_0^\beta \Big\{a u'^2+  2 b G(u) \Big\}
  \nonumber \\
  &\geq&
  \frac {m_0^2}{\int_0^\beta 1/a}
  + 2 G_0 \int_0^\beta b.
  \label{key}
\end{eqnarray}

Define this last expression as a function of $t>0$, namely
$$
  \Psi(t):=
  \frac {m_0^2}{\int_0^t 1/a}
  +
  2 G_0 \int_0^t b.
$$
Clearly $\lim_{t\to 0}\Psi(t)=+\infty$ and, since $G_0>0$ by the last
assumption in \eqref{eq:LicensePlate} and the fact that $m_0<M$, $\lim_{t \to
+\infty}\Psi(t)\in(0,+\infty]$. Since $\Psi$ is positive and
continuous in $(0,\infty)$, it holds that $C^{as}=\inf_{t>0}\Psi(t)>0$
and $C^{as}$ depends only on $a$, $b$, $m_0$, and $G_0$. This combined with
\eqref{key} proves the result.
\end{proof}

Propositions~\ref{prop:XiMen} and \ref{acotadas} yield immediately the following result.
\begin{Corollary}
Assume that \eqref{eq:LicensePlate} holds, $m>0$, and that
$$
   \inf_{t \in (0,L)}
   \left\{ \frac{M^2 + m^2 }{\int_{t}^L 1/a} + 2 \, G_1 \int_{t}^L b \right\}
   \, < \, C^{as},
$$
where $G_1 := \sup_{s \in (-m, \overline{M})} G(s)$,
$\overline{M} = \max\{m,M\}$, and $C^{as}$ is defined by
\eqref{Cas}-\eqref{m0_and_G0}.
Then
$$
\inf_{u \in H^{as}_m (I)} \mathcal{E} (u, I)   >   \inf_{u \in H^{1}_m (I)} \mathcal{E} (u, I).
$$
In particular, the minimizers of $\mathcal{E} (\cdot, I)$ in $H^1_m(I)$ are not odd.
\end{Corollary}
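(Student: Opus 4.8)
The plan is to obtain the strict inequality by chaining the two one-sided bounds on the two infima that are supplied by Propositions~\ref{prop:XiMen} and \ref{acotadas}, and then to read off the non-oddness of minimizers by a one-line contradiction argument.

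First I would apply Proposition~\ref{acotadas}: since \eqref{eq:LicensePlate} holds and $m>0$, every $u\in H^{as}_m(I)$ satisfies $\mathcal{E}(u,I)\geq C^{as}$, with $C^{as}>0$ given explicitly by \eqref{Cas}--\eqref{m0_and_G0}; taking the infimum over $H^{as}_m(I)$ this gives $\inf_{u\in H^{as}_m(I)}\mathcal{E}(u,I)\geq C^{as}$. Next I would apply Proposition~\ref{prop:XiMen}, which for each $t\in[0,L)$ yields $\inf_{v\in H^1_m(I)}\mathcal{E}(v,I)\leq (M^2+m^2)/\int_t^L (1/a) + 2G_1\int_t^L b$. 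Since this holds in particular for every $t\in(0,L)$, and the left-hand side does not depend on $t$, passing to the infimum over $t\in(0,L)$ still gives an upper bound; invoking the hypothesis of the corollary then gives $\inf_{v\in H^1_m(I)}\mathcal{E}(v,I)<C^{as}$. Combining the two estimates,
\[
  \inf_{u\in H^{as}_m(I)}\mathcal{E}(u,I)\ \geq\ C^{as}\ >\ \inf_{v\in H^1_m(I)}\mathcal{E}(v,I),
\]
which is exactly \eqref{eq:Hubei}.

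For the last assertion I would first recall that a minimizer $u$ of $\mathcal{E}(\cdot,I)$ in $H^1_m(I)$ exists; this is standard and follows exactly as at the beginning of the proof of Proposition~\ref{prop:Salva}, using $a>0$ and $G\geq0$ to bound a minimizing sequence in $H^1(I)$. If such a minimizer were odd it would lie in $H^{as}_m(I)$, so $\mathcal{E}(u,I)\geq C^{as}$; but $\mathcal{E}(u,I)=\inf_{v\in H^1_m(I)}\mathcal{E}(v,I)<C^{as}$, a contradiction, so no minimizer in $H^1_m(I)$ is odd. I do not expect a genuine obstacle here: all the real work has already been carried out in Propositions~\ref{prop:XiMen} and \ref{acotadas}, and this corollary is merely their juxtaposition; the only points needing a moment's care are matching the ranges of $t$ in the two statements (harmless, as noted) and recording the existence of a minimizer for the \emph{in particular} claim.
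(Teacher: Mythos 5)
Your proposal is correct and coincides with the paper's argument: the corollary is stated there as an immediate consequence of Propositions~\ref{prop:XiMen} and \ref{acotadas}, exactly the juxtaposition you carry out, and your concluding contradiction (an odd minimizer would lie in $H^{as}_m(I)$ and violate the strict inequality) is the intended reading of the ``in particular'' clause. Your extra remarks on the range of $t$ and on the existence of a minimizer are harmless and consistent with the paper.
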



Next, by setting
$$
   \Phi_m (L) := \inf_{u \in H^{1}_m (I)} \mathcal{E} (u, I),\quad L>0\,,
$$
we characterize the weights $a$ and $b$ for which
$\liminf_{L\rightarrow+\infty}\Phi_m (L)=0$. This, jointly with Proposition~\ref{acotadas},
will provide a first class of weights that guarantee~\eqref{eq:Hubei}. That
is, a class of weights for which the minimizers of $\mathcal{E}(\cdot,I)$ in
$H^1_m(I)$ are not odd.



\begin{Proposition}\label{infimo=0}
Assume that \eqref{eq:LicensePlate} holds and $m>0$. The following assertions are
equivalent:
\begin{enumerate}
\item[{\rm (i)}]
$\liminf_{L\rightarrow+\infty}\Phi_m (L)=0$;
\item[{\rm (ii)}]
There exists a sequence of bounded intervals $J_n=[\alpha_n,\beta_n] \subset \mathbb{R}$ $(\alpha_n < \beta_n)$
satisfying
\begin{equation} \label{eq:Salva}
   \int_{J_n}\frac{1}{a}\rightarrow +\infty
   \quad \hbox{ and } \quad
   \int_{J_n} b\rightarrow 0.
\end{equation}
\end{enumerate}
\end{Proposition}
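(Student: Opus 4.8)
The plan is to treat the two implications separately. The implication $(ii)\Rightarrow(i)$ will be an almost immediate consequence of the upper bound in Proposition~\ref{prop:XiMen}; the reverse implication $(i)\Rightarrow(ii)$ is the substantive one, and I would obtain the intervals $J_n$ from an energy-minimizing sequence by a level-crossing argument.

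\emph{Step 1: $(ii)\Rightarrow(i)$.} Given intervals $J_n=[\alpha_n,\beta_n]$ with $\int_{J_n}1/a\to+\infty$ and $\int_{J_n}b\to0$, I would first remark that $0\notin J_n$ for all large $n$: since $b$ is continuous and positive it is bounded below on a fixed neighbourhood of $0$, and since $1/a$ is locally bounded, having $0\in J_n$ would force either $\int_{J_n}b$ to stay bounded away from $0$, or $J_n\to\{0\}$, in which case $\int_{J_n}1/a$ stays bounded. Using that $a,b$ are even, after replacing $J_n$ by $-J_n$ if necessary we may assume $0<\alpha_n<\beta_n$; and since $1/a$ is locally bounded, $\beta_n$ is unbounded, so along a subsequence $\beta_n\to+\infty$. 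Proposition~\ref{prop:XiMen} with $t=\alpha_n$ and $L=\beta_n$ then gives
\[
  \Phi_m(\beta_n)\ \le\ \frac{M^2+m^2}{\int_{J_n}1/a}\ +\ 2\,G_1\int_{J_n}b\ \longrightarrow\ 0 ,
\]
so $\liminf_{L\to+\infty}\Phi_m(L)=0$.

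\emph{Step 2: $(i)\Rightarrow(ii)$.} Conversely, pick $L_n\to+\infty$ and competitors $u_n\in H^1_m((-L_n,L_n))$ with $\mathcal{E}(u_n,(-L_n,L_n))\le\Phi_m(L_n)+\tfrac1n\to0$ (no existence of minimizers is needed here). Nonnegativity of the two terms of $\mathcal{E}$ gives $\int_{-L_n}^{L_n}a\,u_n'^2\to0$ and $\int_{-L_n}^{L_n}b\,G(u_n)\to0$. Set $m_0:=\tfrac12\min\{m,M\}$, so that $0<m_0<m$, $[0,m_0]\subset[0,M)$, and hence $c_0:=\min_{[0,m_0]}G>0$. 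Since $u_n$ is continuous with $u_n(-L_n)=-m<0<m_0<m=u_n(L_n)$, I would let $\beta_n:=\sup\{x:u_n(x)\le0\}$ and $\gamma_n:=\inf\{x>\beta_n:u_n(x)\ge m_0\}$; a short continuity argument shows $\beta_n<\gamma_n$, $u_n(\beta_n)=0$, $u_n(\gamma_n)=m_0$, and $0\le u_n\le m_0$ on $J_n:=[\beta_n,\gamma_n]$, whence $G(u_n)\ge c_0$ there. Therefore $\int_{J_n}b\le c_0^{-1}\int_{J_n}b\,G(u_n)\to0$, while the Cauchy--Schwarz inequality used on $J_n$ (exactly as in the proof of Lemma~\ref{Lemma3:5}) gives
\[
  m_0^2\ =\ \Big(\int_{\beta_n}^{\gamma_n}u_n'\Big)^{\!2}\ \le\ \Big(\int_{J_n}a\,u_n'^2\Big)\Big(\int_{J_n}\tfrac1a\Big) ,
\]
so $\int_{J_n}1/a\ge m_0^2\big/\!\int_{-L_n}^{L_n}a\,u_n'^2\to+\infty$. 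Thus $(J_n)$ satisfies \eqref{eq:Salva}.

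I expect the main difficulty to be the construction of $J_n$ in Step~2: one has to single out a ``potential well'' $[0,m_0]$ whose length is controlled from below uniformly in $n$ and which every admissible function with boundary data $\pm m$ must cross, and then verify through the crossing argument that $u_n$ really stays inside $[0,m_0]$ on the chosen sub-interval, so that the integrand $G(u_n)$ is bounded below there by the fixed constant $c_0>0$. After that, the two convergences required in \eqref{eq:Salva} follow respectively from $\int b\,G(u_n)\to0$ and from Cauchy--Schwarz combined with $\int a\,u_n'^2\to0$.
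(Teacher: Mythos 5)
Your proof is correct and follows essentially the same route as the paper: for $(ii)\Rightarrow(i)$ you reduce to intervals in $[0,\infty)$ by evenness and apply the upper bound of Proposition~\ref{prop:XiMen} with $t=\alpha_n$, $L=\beta_n$, and for $(i)\Rightarrow(ii)$ you extract a crossing interval where the competitor runs from $0$ to $m_0=\frac12\min\{m,M\}$ inside the potential well, bounding $\int_{J_n}b$ via $G\geq c_0>0$ and $\int_{J_n}1/a$ via Cauchy--Schwarz, exactly as in the paper's use of Lemma~\ref{Lemma3:5}. The only (harmless) deviations are cosmetic: you work with near-minimizers instead of minimizers and make the crossing construction explicit via a sup/inf definition.
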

\begin{proof}
(i) $\Rightarrow$ (ii)
Set $I_n := (-L_n, L_n)$ with $L_n\to\infty$. Let $u_n\in H^1_m(I_n)$ be a minimizer
of $\mathcal{E} (\cdot, I_n)$ and assume that $\Phi_m (L_n)= \mathcal{E} (u_n, I_n)\to 0$.

Let $G_0$ and $m_0$ be given in \eqref{m0_and_G0}.
Consider an interval $J_n :=[\alpha_n, \beta_n] \subset I_n$ such that
$$
  u_n ( \alpha_n)=0,\quad u_n ( \beta_n)=m_0,
  \quad \hbox{ and } \quad
  |u_n(x)| \leq m_0
  \quad \textrm{for all }x \in (\alpha_n, \beta_n).
$$
Since $G(u_n(x)) \geq G_0 >0$ for all $x \in (\alpha_n, \beta_n)$ by \eqref{eq:LicensePlate},
applying Lemma~\ref{Lemma3:5} on the interval $ (\alpha_n, \beta_n)$, we conclude
$$
   \mathcal{E} (u_n, I_n)
   \geq
   \int_{J_n} \left\{ \frac{1}{2} a u_n'^2+ b G(u_n)\right\}
   \geq
   \frac {m_0^2}{2\int_{J_n} 1/a}+ G_0 \int_{J_n} b.
$$
This last inequality proves the assertion.


(ii) $\Rightarrow$ (i)
We claim that we can assume, without loss of generality, that the sequence of intervals
$J_n=[\alpha_{n}, \beta_{n}] \subset [0,\infty)$ instead of $J_n\subset\mathbb{R}$.
Indeed, note that we can suppose that $\beta_n>0$ changing $[\alpha_n,\beta_n]$ by
$[-\beta_n,-\alpha_n]$ if necessary. Moreover, if $\alpha_n\leq0$ then
$$
0\leq \int_{\alpha_n}^0 b \leq \int_{J_n} b \rightarrow 0
\quad \textrm{and}\quad
0\leq \int_0^{\beta_n} b \leq \int_{J_n} b \rightarrow 0
$$
by \eqref{eq:Salva}. This proves that $\alpha_n$ and $\beta_n$ tend to zero as $n$
goes to infinity, a contradiction with \eqref{eq:Salva}:
\begin{equation}\label{new:a}
\int_{J_n}\frac{1}{a}=\int_{\alpha_n}^{\beta_n}\frac{1}{a}\rightarrow +\infty.
\end{equation}
Therefore, we can assume, up to a subsequence, that $\alpha_n\geq 0$ proving the claim.

Let $J_n=[\alpha_{n}, \beta_{n}] \subset [0,\infty)$ be a sequence of bounded intervals
satisfying~\eqref{eq:Salva}.
Inequality~\eqref{eq:XiMen} applied with $t= \alpha_n$ and $L=\beta_n$ gives
$$
   0 \leq
   \Phi_m (\beta_n)
   \, \leq \,
  \frac {M^2+m^2}{\int_{J_n} 1/a}+  2G_1\int_{J_n} b,
$$
where $G_1=\sup_{s\in(-m,\overline{M})} G(s)$ and $\overline{M} = \max\{m,M\}$.
We conclude the proof noting that
the right hand-side of the previous inequality tends to zero by \eqref{eq:Salva}
and that $\beta_n\rightarrow+\infty$ by \eqref{new:a}.
\end{proof}

Now, as stated in Proposition \ref{cor:characteriation}, we are
able to exhibit a class of weights $a$, $b$, for which
the minimizers of  $\mathcal{E} (\cdot, I)$ in $H^1_m(I)$ are not odd when the domain is large.

\begin{proof}[Proof of Proposition {\rm\ref{cor:characteriation}}]
By the hypothesis of the proposition, \eqref{eq:LicensePlate} is satisfied taking $M:=m$,
after replacing $G$ by $G-G(M)$. Now, on the one hand, by Proposition \ref{acotadas}
there exists a constant $C^{as}>0$
(independent of the interval $I$) such that $\mathcal{E} (u, I)\geq
C^{as}$ for all $u \in H_m^{as}(I)$. On the other hand, note that
$\Phi_m$ is a nonincreasing function, \textit{i.e.}, $\Phi_m(L_2)\leq \Phi_m(L_1)$ for all $L_1<L_2$.
This follows by noting that given $u\in H^1_m((-L_1,L_1))$ we can extend it
to $\tilde{u}\in H^1_m((-L_2,L_2))$:
$$
\tilde{u}:=
\left\{
\begin{array}{ccc}
-m&\textrm{in}&(-L_2,-L_1),\\
u&\textrm{in}&(-L_1,L_1),\\
m&\textrm{in}&(L_1,L_2),
\end{array}
\right.
$$
and $\mathcal{E}(u,(-L_1,L_1))=\mathcal{E}(\tilde{u},(-L_2,L_2))$.
Therefore, by Proposition~\ref{infimo=0} we can take $L_0>0$ such that
$\Phi_m(L)<C^{as}$ for all $L\geq L_0$. As a consequence, minimizers of
$\mathcal{E} (\cdot,I)$ in $H^1_m(I)$ cannot be odd for $L\geq L_0$.
\end{proof}


\begin{Remark}
Note that if $ab$ is increasing and positive in $[0,\infty)$, then condition~\eqref{eq:Salva}
cannot hold and therefore we cannot use Proposition~\ref{cor:characteriation} to obtain non-oddness
of minimizers on large intervals. Indeed, the previous assertion on condition ~\eqref{eq:Salva}
follows from
$$
   \int_{J_n} b \, = \, \int_{J_n} \frac{ab}{a}
                \, \geq \, (ab) (0) \int_{J_n} \frac{1}{a}.
$$
This is consistent with the result of Theorem~\ref{thm:Brahms2} where we
proved that problem \eqref{E-L} admits a unique solution, which is therefore
odd, under the assumption that $a\equiv b$ is nondecreasing in $(0,L)$.

\end{Remark}


Finally, we point out that we could give a precise quantitative result for the minimum length of the interval $L$
(in terms of lower and upper bounds on $a$ and $b$, and of the nonlinearity $G$)
guaranteeing non-oddness of minimizers.

\section{Uniqueness results in higher dimensions} \label{sec:ndim}
In this section we consider a bounded domain $\Omega\subset\mathbb{R}^N$,
a reflection with respect to a hyperplane, $\sigma:\mathbb R^N \to \mathbb R^N$, that leaves $\Omega$ invariant, and
\begin{equation}\label{6.0}
  A \in C^2 (\overline{\Omega}, S_N(\mathbb R) ),
  \, \, \,
  0 < b \in C^1 (\overline{\Omega}, \mathbb R),
  \, \, \,
  \varphi\in (H^1\cap L^\infty)(\Omega),
\end{equation}
where $S_N (\mathbb R)$ stands for the set of $N \times N$ symmetric matrices with real coefficients. We
will assume that
\begin{equation} \label{eq:UnifCoercivity}
    \langle A(x) \xi, \xi \rangle \geq c_0 |\xi|^2
    \quad
   \textrm{for all }x\in\Omega\textrm{ and } \xi \in \mathbb R^N,
\end{equation}
for some positive constant $c_0$. The potential $G\in C^2 (\mathbb R)$ will satisfy
\begin{equation}\label{newG:bis}
\begin{array}{l}
\textrm{there exists }M>0\textrm{ such that } G'(s)\leq 0\textrm{ for all }s< -M
\\
\textrm{and }G'(s)\geq 0\textrm{ for all }s>M
\end{array}
\end{equation}
for some constant $M>0$.
When discussing the antisymmetry property of the solution, we shall also assume
\begin{equation} \label{eq:Flughafen:bis}
  G(s) = G(-s)\textrm{ in }\mathbb{R},
  \quad
  A \circ \sigma = A,
  \quad
  b \circ \sigma = b ,
  \quad
  \varphi \circ \sigma =  - \varphi.
\end{equation}

With these assumptions, we address the question of uniqueness of critical points for the functional
\begin{equation}\label{eq:NDimFunctional:bis}
   \mathcal{E} (u, \Omega)
   :=
   \int_{\Omega}
   \left\{  \frac{1}{2} \langle A(x) \nabla u,  \nabla u \rangle  + b(x) G(u) \right\} dx,\quad
   u\in H^1_\varphi(\Omega),
\end{equation}
and also of their antisymmetry property. We work with the functional spaces
$$
  H^{1}_{\varphi} (\Omega)
  := \{ u \in H^1 (\Omega) \, \colon \, u - \varphi \in H^1_0 (\Omega) \}
$$
and
$$
  H^{as}_{\varphi} (\Omega)
  := \{ u \in H^1_{\varphi} (\Omega) \, \colon \, u \circ \sigma = - u\} .
$$

Let us first emphasize that critical points $u$ are weak solutions to the
Euler-Lagrange equation
\begin{equation}\label{new116}
   - \hbox{div} (A(x) \nabla u)  + b(x) G'(u) = 0,
   \quad
   u \in H^1_{\varphi} (\Omega).
\end{equation}
As a consequence, if \eqref{newG:bis} holds then
\begin{equation}
\label{linf}
 |u|\leq \max\{M,\|\varphi\|_{L^\infty(\partial\Omega)}\} \qquad \text{ in } \Omega
\end{equation}
by \eqref{newG:bis} and the maximum principle. Therefore, since $u\in L^\infty(\Omega)$
then $G'(u)\in L^\infty(\Omega)$, and hence problem \eqref{new116}
can be understood in the distributional sense.

The following existence result states, in particular, that $\mathcal{E}$ always admits
an antisymmetric critical point under assumption \eqref{eq:Flughafen:bis}.
\begin{Proposition} \label{prop:ExistenceMin}
Assume \eqref{6.0}, \eqref{eq:UnifCoercivity}, and \eqref{newG:bis}.
Both functionals, $\mathcal{E}$ and when assuming \eqref{eq:Flughafen:bis} its restriction
$\mathcal{E}|_{H^{as}_{\varphi} (\Omega)}$, admit a minimizer. Moreover, both minimizers are
critical points of $\mathcal{E}$ in $H^1_{\varphi}(\Omega)$.
\end{Proposition}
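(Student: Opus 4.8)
The plan is to obtain both minimizers by the direct method of the calculus of variations, and then to deduce criticality through a first-variation argument --- for the antisymmetric minimizer via a symmetric/antisymmetric splitting of the admissible test functions. First I would record that $G$ is bounded below on $\mathbb R$: by \eqref{newG:bis} it is nondecreasing on $[M,+\infty)$ and nonincreasing on $(-\infty,-M]$, so $G(s)\ge c_G:=\min_{s\in[-M,M]}G(s)>-\infty$ for every $s$ (the minimum being finite since $G\in C^2$). As $b>0$ and $b\in L^1(\Omega)$ ($\Omega$ bounded), $\int_\Omega b\,G(u)\ge\int_\Omega b\,c_G=:-C_0$ for all $u$, hence by \eqref{eq:UnifCoercivity} $\mathcal E(u,\Omega)\ge\tfrac{c_0}{2}\int_\Omega|\nabla u|^2-C_0$; together with Poincar\'e's inequality applied to $u-\varphi\in H^1_0(\Omega)$ this makes $\mathcal E(\cdot,\Omega)$ coercive on $H^1_\varphi(\Omega)$, and $\mathcal E(\varphi,\Omega)<+\infty$ since $\varphi\in H^1\cap L^\infty$, so the relevant infima are finite.

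For weak lower semicontinuity, the term $u\mapsto\tfrac12\int_\Omega\langle A(x)\nabla u,\nabla u\rangle$ is convex in $\nabla u$ (because $A(x)\ge 0$) and strongly continuous, hence sequentially weakly lower semicontinuous; and if $u_n\rightharpoonup u$ in $H^1(\Omega)$ then $u_n\to u$ a.e.\ along a subsequence by Rellich's theorem, so Fatou's lemma (with $b\,G(u_n)\ge b\,c_G\in L^1(\Omega)$) gives $\int_\Omega b\,G(u)\le\liminf_n\int_\Omega b\,G(u_n)$. A minimizing sequence in $H^1_\varphi(\Omega)$ is therefore bounded in $H^1$, its weak limit stays in $H^1_\varphi(\Omega)$ ($H^1_0(\Omega)$ being weakly closed), and it is a minimizer. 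The same argument applies in $H^{as}_\varphi(\Omega)$, which under \eqref{eq:Flughafen:bis} is nonempty (it contains $\varphi$, since $\varphi\circ\sigma=-\varphi$), convex and strongly closed, hence weakly closed; so $\mathcal E|_{H^{as}_\varphi(\Omega)}$ also attains its infimum.

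To prove criticality I would first truncate, so that the first variation becomes legitimate. With $K:=\max\{M,\|\varphi\|_{L^\infty(\Omega)}\}$ and $T_K(s):=\max\{-K,\min\{K,s\}\}$, for any $u\in H^1_\varphi(\Omega)$ one has $T_K(u)-\varphi\in H^1_0(\Omega)$, $\langle A\nabla T_K(u),\nabla T_K(u)\rangle\le\langle A\nabla u,\nabla u\rangle$ a.e.\ (since $A\ge 0$ and $\nabla T_K(u)=\nabla u\,\mathbf 1_{\{|u|<K\}}$), and $G(T_K(u))\le G(u)$ a.e.\ by the monotonicity of $G$ outside $[-M,M]$; hence $\mathcal E(T_K(u),\Omega)\le\mathcal E(u,\Omega)$, and as $T_K$ is odd it preserves antisymmetry. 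Thus each minimizer may be assumed to satisfy $|u|\le K$, so $G'(u)\in L^\infty(\Omega)$, and for $\psi\in H^1_0(\Omega)\cap L^\infty(\Omega)$ the map $t\mapsto\mathcal E(u+t\psi,\Omega)$ is differentiable at $0$ with derivative $\int_\Omega\{\langle A\nabla u,\nabla\psi\rangle+b\,G'(u)\,\psi\}$; approximating an arbitrary $\psi\in H^1_0(\Omega)$ by $T_j(\psi)$ in $H^1$ extends this identity to all of $H^1_0(\Omega)$. For the unconstrained minimizer $u$ this derivative vanishes for every $\psi\in H^1_0(\Omega)$, so $u$ is a critical point of $\mathcal E$; it then solves \eqref{new116} weakly and \eqref{linf} follows from the maximum principle. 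For the minimizer $u_{as}$ over $H^{as}_\varphi(\Omega)$, given $\psi\in H^1_0(\Omega)$ I would split $\psi=\psi^++\psi^-$ with $\psi^\pm(x):=\tfrac12(\psi(x)\mp\psi(\sigma x))\in H^1_0(\Omega)$, so that $\psi^-\circ\sigma=-\psi^-$ and $\psi^+\circ\sigma=\psi^+$; minimality over $H^{as}_\varphi(\Omega)$ (note $u_{as}+t\psi^-\in H^{as}_\varphi(\Omega)$) annihilates the $\psi^-$-part, while the change of variables $y=\sigma x$ --- an isometry with $\sigma(\Omega)=\Omega$, $D\sigma=\sigma=\sigma^{-1}$ and $|\det D\sigma|=1$ --- combined with $A\circ\sigma=A$, $b\circ\sigma=b$, $u_{as}\circ\sigma=-u_{as}$, the oddness of $G'$ (as $G$ is even) and $\psi^+\circ\sigma=\psi^+$ shows the $\psi^+$-part equals its own negative, hence vanishes. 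Adding the two, $\langle\mathcal E'(u_{as}),\psi\rangle=0$ for all $\psi\in H^1_0(\Omega)$, so $u_{as}$ is a critical point of $\mathcal E$ in $H^1_\varphi(\Omega)$ as well. The main obstacle is precisely this last step --- reducing criticality on the constrained class to criticality on the whole space --- whose key ingredients are the invariance $\mathcal E(u\circ\sigma,\Omega)=\mathcal E(u,\Omega)$ under \eqref{eq:Flughafen:bis} (for the gradient term this uses that $\sigma$ is an orthogonal reflection and, in the non-scalar case, that $A(x)$ commutes with $\sigma$) and the symmetric/antisymmetric decomposition of the test functions.
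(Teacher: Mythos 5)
Your proof is correct and follows essentially the same route as the paper: the direct method for existence (which the paper simply invokes as standard, noting $G$ bounded below and $\mathcal{E}(\varphi,\Omega)<+\infty$) and, for criticality of the antisymmetric minimizer, the same decomposition of test functions into symmetric and antisymmetric parts, with the antisymmetric part handled by minimality within $H^{as}_\varphi(\Omega)$ and the symmetric part vanishing because the corresponding integrand is antisymmetric under $\sigma$. One small slip: with $\psi^{\pm}(x)=\tfrac12\bigl(\psi(x)\mp\psi(\sigma x)\bigr)$ it is $\psi^{+}$ that satisfies $\psi^{+}\circ\sigma=-\psi^{+}$ and $\psi^{-}$ that is symmetric, so your labels are swapped, although the roles you assign the two pieces in the argument are the correct ones; your extra truncation step justifying the first variation is a harmless refinement the paper leaves implicit.
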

\begin{proof}
The fact that $\mathcal{E}$ and the restriction $\mathcal{E}|_{H^{as}_{\varphi} (\Omega)}$ admit
a minimizer $u_0 \in H^{1}_{\varphi} (\Omega)$ and $u^{as} \in H^{as}_{\varphi} (\Omega)$,
respectively, follows by applying standard results of the calculus of variations
(note that $G$ is bounded from below by \eqref{newG:bis}
and that $\mathcal{E}(\varphi,\Omega)<+\infty$).

To show that $u^{as}\in H^{as}_{\varphi} (\Omega)$ is a critical point of $\mathcal{E} (\cdot, \Omega)$,
write any $\xi \in C_0^\infty(\Omega)$ as $\xi = \xi^{s} + \xi^{as}$ with
$\xi^{s}$ and $\xi^{as}$ to be symmetric and antisymmetric respectively.
Obviously $D \mathcal{E} (u^{as})\xi^{as} = 0$ in the weak sense. Furthermore, due
to the symmetry assumptions on $A$, $b$, and $G$, we readily see that the functions
$$
  x \mapsto  \langle A( x ) \nabla u^{as} (x),   \nabla \xi^{s} (x)  \rangle
  \qquad
  x \mapsto  b (x) G'(u^{as} (x) ) \xi^{s} (x)
$$
are antisymmetric. Therefore $D \mathcal{E} (u^{as})\xi^{s} = 0$.
We conclude $D \mathcal{E} (u^{as})\xi = 0$ for any
$\xi \in C_0^{\infty} (\Omega)$.
\end{proof}

Our next proposition establishes uniqueness of critical points of $\mathcal{E}$ when the second
variation of $\mathcal{E}$ at $u\equiv0$ is nonnegative and, in addition,
$-G''(0) > -G''(s)$ for all $s\neq0$. Let us note that this last condition
on $G$ is satisfied by the double well potential $G(s) = (1-s^2)^2/4$.

Before stating our result, let us recall that the second variation of energy at $u\equiv 0$
is nonnegative whenever
$$
  D^2\mathcal{E}(0)(\xi,\xi)
  :=\int_{\Omega} \{ \langle A(x) \nabla \xi , \nabla \xi \rangle + b(x) G''(0) \xi^2 \}\,dx
  \, \geq \, 0
  \quad
  \textrm{for all }\xi \in H^1_0 (\Omega).
$$
If in addition $u\equiv0$ is a solution of \eqref{new116}, then we say that $u\equiv0$ is a
semi-stable solution. Instead, if $u\equiv0$ is a solution of \eqref{new116} and
$D^2\mathcal{E}(0)$ is not nonnegative definite, then we say that $u\equiv0$ is an unstable
solution.

\begin{Remark}\label{RemD^2}
By considering the eigenvalue
$$
  \lambda_1 (A,b, \Omega) :=
  \inf\left\{ \frac{\int_{\Omega} \langle A(x) \nabla \xi, \nabla \xi \rangle\,dx}
                   {\int_{\Omega} b(x) \xi^2\,dx}
       \, \colon \, \xi \in H^1_{0} (\Omega), \, \xi \not \equiv 0
  \right\} ,
$$
we easily see that $D^2\mathcal{E}(0) \geq 0$ if and only if $\lambda_1(A,b, \Omega) \geq - G''(0)$.
\end{Remark}

We establish the following antisymmetry result and a kind of converse to it.

\begin{Proposition}\label{Proposition:Marc1}
Assume \eqref{6.0}, \eqref{eq:UnifCoercivity}, and \eqref{newG:bis}.
The following assertions hold:
\begin{enumerate}
\item[{\rm (i)}]
Assume that $D^2\mathcal{E}(0) \geq 0$ and that $-G''(0) > -G''(s)$ for all $s\neq 0$.
Then, for every $\varphi\in (H^1\cap L^\infty)(\Omega)$, $\mathcal{E} (\cdot, \Omega)$
admits a unique critical point in $H^{1}_\varphi (\Omega)$.
In addition, under condition~\eqref{eq:Flughafen:bis} it is antisymmetric.
\item[{\rm (ii)}]
Assume \eqref{eq:Flughafen:bis} and that $D^2\mathcal{E}(0)(\xi,\xi) < 0$ for some
$\xi\in H^1_0(\Omega)$. Then, for some boundary values $\varphi\in C^1(\overline{\Omega})$, the minimizers
of $\mathcal{E} (\cdot, \Omega)$ in $H^{1}_\varphi(\Omega)$ are not antisymmetric.
\end{enumerate}
\end{Proposition}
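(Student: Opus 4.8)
The plan is to establish part~(i) by testing the subtracted Euler--Lagrange equations against the difference of two critical points, and part~(ii) by exhibiting, for the boundary datum $\varphi\equiv 0$, a nonnegative --- hence non-antisymmetric --- minimizer.

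For~(i), the existence of a critical point is already contained in Proposition~\ref{prop:ExistenceMin}. For uniqueness, let $u_1,u_2\in H^1_\varphi(\Omega)$ be critical points and set $w:=u_1-u_2\in H^1_0(\Omega)$. By \eqref{linf} both $u_i$ are bounded, so $G'(u_1)-G'(u_2)=c\,w$ with $c(x):=\int_0^1 G''\big(u_2(x)+s\,w(x)\big)\,ds\in L^\infty(\Omega)$. Subtracting the weak forms of \eqref{new116} for $u_1$ and $u_2$ and using $w$ as test function gives $\int_\Omega\langle A\nabla w,\nabla w\rangle\,dx+\int_\Omega b\,c\,w^2\,dx=0$, while $D^2\mathcal{E}(0)\ge 0$, applied with $\xi=w$, yields $\int_\Omega\langle A\nabla w,\nabla w\rangle\,dx\ge -G''(0)\int_\Omega b\,w^2\,dx$. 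Combining these, $\int_\Omega b\,(c-G''(0))\,w^2\,dx\le 0$. On the other hand, the hypothesis $-G''(0)>-G''(s)$ for $s\neq 0$ means $G''(s)\ge G''(0)$ for every $s$, with strict inequality for $s\neq 0$; hence $c(x)\ge G''(0)$, the integrand above is nonnegative, and therefore $b\,(c-G''(0))\,w^2=0$ a.e. Finally, at any point where $w(x)\neq 0$ the affine map $s\mapsto u_2(x)+s\,w(x)$ is injective, so its value can vanish for at most one $s\in[0,1]$ and hence $c(x)>G''(0)$ strictly at that point; thus $w\equiv 0$, i.e. $u_1\equiv u_2$. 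I expect this to be the main obstacle: passing from the integral identity to a pointwise one and then exploiting the strict ordering $-G''(0)>-G''(s)$ through the averaged second derivative $c$; the bound \eqref{linf} enters only to guarantee $c\in L^\infty$.

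Under \eqref{eq:Flughafen:bis}, Proposition~\ref{prop:ExistenceMin} also supplies a minimizer of $\mathcal{E}$ over $H^{as}_\varphi(\Omega)$ that is a critical point of $\mathcal{E}$ in $H^1_\varphi(\Omega)$; by the uniqueness just shown it is the unique critical point, which is therefore antisymmetric, completing~(i). For~(ii), take $\varphi\equiv 0\in C^1(\overline\Omega)$, which is compatible with \eqref{eq:Flughafen:bis}. Since $G$ is even, $G'(0)=0$, so $u\equiv 0$ is a critical point of $\mathcal{E}$ in $H^1_0(\Omega)$. By hypothesis there is $\xi\in H^1_0(\Omega)$ with $D^2\mathcal{E}(0)(\xi,\xi)<0$; by density of $C_0^\infty(\Omega)$ in $H^1_0(\Omega)$ and continuity of the quadratic form $D^2\mathcal{E}(0)$ we may take $\xi\in C_0^\infty(\Omega)$, and then $g(t):=\mathcal{E}(t\xi,\Omega)$ is of class $C^2$ with $g'(0)=0$ and $g''(0)=D^2\mathcal{E}(0)(\xi,\xi)<0$, so $\inf_{H^1_0(\Omega)}\mathcal{E}<\mathcal{E}(0,\Omega)$. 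By Proposition~\ref{prop:ExistenceMin} there is a minimizer $u_0\in H^1_0(\Omega)$, and $u_0\not\equiv 0$ by the previous line. Since $G$ is even, $|u_0|\in H^1_0(\Omega)$ has the same energy as $u_0$ (indeed $|\nabla|u_0||=|\nabla u_0|$ a.e. and $G(|u_0|)=G(u_0)$), so $|u_0|$ is also a minimizer; moreover $|u_0|\ge 0$ and $|u_0|\not\equiv 0$. If $|u_0|$ were antisymmetric, then $|u_0|\circ\sigma=-|u_0|$ a.e. would force the nonnegative left-hand side and the nonpositive right-hand side both to vanish, i.e. $|u_0|\equiv 0$, a contradiction. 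Hence $|u_0|$ is a minimizer of $\mathcal{E}$ in $H^1_0(\Omega)$ that is not antisymmetric, which proves~(ii).
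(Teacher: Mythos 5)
Your proof is correct, but in both parts it takes a genuinely different route from the paper's. For uniqueness in (i), the paper joins the two critical points by the segment $u^t=tu_1+(1-t)u_2$, shows that $h(t)=\mathcal{E}(u^t,\Omega)$ is convex (its second derivative \eqref{second:der} is nonnegative because $G''\geq G''(0)$ and $D^2\mathcal{E}(0)\geq 0$), uses $h'(0)=h'(1)=0$ to force all inequalities to be equalities, and only then exploits $-G''(0)>-G''(s)$ for $s\neq 0$. You instead subtract the two weak equations \eqref{new116}, test with $w=u_1-u_2\in H^1_0(\Omega)$, and play the resulting identity against the stability inequality $D^2\mathcal{E}(0)(w,w)\geq 0$; the strict inequality enters through the averaged second derivative $c(x)=\int_0^1G''\big(u_2+sw\big)\,ds$, which is $>G''(0)$ wherever $w\neq 0$ since the affine segment $s\mapsto u_2(x)+sw(x)$ crosses $0$ at most once. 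This is a more direct argument, resting on the same two ingredients but avoiding the convexity-along-segments step and giving a cleaner treatment of the equality case; as in the paper, one should note that $w$ is admissible as a test function because $u_1,u_2\in L^\infty$ by \eqref{linf}, so $bG'(u_i)\in L^\infty$ and the weak form extends from $C^\infty_0$ to $H^1_0$ by density. Your deduction of antisymmetry from uniqueness via the antisymmetric critical point of Proposition~\ref{prop:ExistenceMin} is equivalent to the paper's remark that $-u\circ\sigma$ is again a critical point. For (ii), the paper argues by perturbation: minimizers $u_n$ for data $\varphi_n\to 0$ in $C^1$ converge weakly to a nonzero, constant-sign minimizer $u_0\in H^1_0(\Omega)$, contradicting $\int_\Omega u_n=0$ if the $u_n$ were antisymmetric. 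You take $\varphi\equiv 0$ directly (a legitimate choice: it is $C^1$ and odd), use instability of $0$ to get $\inf\mathcal{E}<\mathcal{E}(0,\Omega)$, and exhibit the nonnegative, nonzero minimizer $|u_0|$, which cannot be odd; this is shorter and bypasses the compactness step.

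One caveat on (ii): as written you produce \emph{one} non-antisymmetric minimizer, i.e. the existential form of the conclusion (which is also how the paper invokes (ii) in Example~\ref{ex6:2}), whereas the paper's argument shows that \emph{every} minimizer for its boundary data fails to be antisymmetric. If you want that stronger reading for $\varphi\equiv 0$, add the paper's own observation: for any minimizer $v$, its absolute value also minimizes, so by the strong maximum principle (as in Proposition~\ref{prop:Schubert}~(i)) either $v\equiv 0$ or $|v|>0$ in $\Omega$; the first is excluded by $\mathcal{E}(v,\Omega)<\mathcal{E}(0,\Omega)$, and the second is incompatible with antisymmetry, since a nonzero continuous odd minimizer would have to vanish on $\Omega\cap H\neq\varnothing$, $H$ being the hyperplane fixed by $\sigma$. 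Also, a small precision: what is used is $\nabla|u_0|=\pm\nabla u_0$ a.e., so that the quadratic form $\langle A\nabla\cdot,\nabla\cdot\rangle$ is unchanged; the statement $\big|\nabla|u_0|\big|=|\nabla u_0|$ alone would not suffice for a general matrix $A$.
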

\begin{proof}
{\bf (i)}
Let $u_1, u_2 \in H^1_{\varphi} (\Omega)$ be two critical points of $\mathcal{E} (\cdot, \Omega)$
and recall that $u_1,u_2\in L^\infty(\Omega)$ by \eqref{linf}.
Define $u^t:=tu_1+(1-t)u_2$, $t\in[0,1]$.
Since $G''(s)\geq G''(0)$ for all $s$, we have
\begin{eqnarray}\label{second:der}
\frac{d^2}{dt^2}\mathcal{E}(u^t,\Omega)
&=&
\int_\Omega \left\{\langle A(x)\nabla (u_1-u_2),\nabla (u_1-u_2)\rangle+b(x)G''(u^t)(u_1-u_2)^2\right\}\,dx
\nonumber\\
&\geq&
\int_\Omega \left\{\langle A(x)\nabla (u_1-u_2),\nabla (u_1-u_2)\rangle+b(x)G''(0)(u_1-u_2)^2\right\}\,dx
\nonumber\\
&\geq&
0,
\end{eqnarray}
where in the last inequality we have used that $u_1-u_2\in H^1_0(\Omega)$ and that $D^2\mathcal{E}(0) \geq 0$.
Therefore, $h(t):=\mathcal{E}(u^t,\Omega)$ is a convex function. Moreover,
since $u_1$ and $u_2$ are critical points, we have that $h'(0)=h'(1)=0$. It follows that
$h$ is constant in $[0,1]$.

As a consequence, the left hand side of \eqref{second:der} is zero
and, thus, all the inequalities in \eqref{second:der}
become equalities. Hence, since $-G''(0) > -G''(s)$ for all $s\neq 0$, we obtain that
$u^t=tu_1+(1-t)u_2=0$ (\textit{i.e.}, $u_1=t^{-1}(t-1)u_2$) in $\{x\in\Omega: u_1(x)\neq u_2(x)\}$.
Since this must hold for all $t\in(0,1)$, we deduce that $\{x\in\Omega: u_1(x)\neq u_2(x)\}=\varnothing$.
Thus, $\mathcal{E} (\cdot, \Omega)$ admits a unique critical point $u$.

Under the additional condition~\eqref{eq:Flughafen:bis}, we have that
$- u \circ \sigma$ is also a critical point. By
uniqueness we must have $- u \circ \sigma = u$. Thus, $u$ is antisymmetric.

{\bf (ii)}
Assume now that $D^2\mathcal{E}(0)(\xi,\xi) < 0$ for some
$\xi\in H^1_0(\Omega)$.
Let $\varphi_n\in C^1(\overline{\Omega})$ be any sequence converging to zero in
$C^1(\overline{\Omega})$ and let $u_n\in H^1(\Omega)$ be a minimizer of $\mathcal{E}(\cdot,\Omega)$
in $H^1_{\varphi_n}(\Omega)$. Let us prove that, for $n$ large enough, $u_n$ is not antisymmetric.

We claim that $u_n\rightharpoonup u_0$ in $H^1(\Omega)$ (up to a subsequence) and that $u_0$ is a minimizer of
$\mathcal{E}(\cdot,\Omega)$ in $H^1_0(\Omega)$.
Indeed, since $u_n$ is a minimizer we have $\mathcal{E}(u_n,\Omega)\leq \mathcal{E}(\varphi_n,\Omega)\leq C$
for some constant $C$ independent of~$n$.
In particular, $\int_\Omega|\nabla u_n|^2\,dx\leq C$ and therefore
$$
\int_\Omega|\nabla (u_n-\varphi_n)|^2\, dx\leq C.
$$
As a consequence, since $(u_n-\varphi_n)|_{\partial\Omega}\equiv 0$ and thus
$u_n-\varphi_n\in H^1_0(\Omega)$, a subsequence of
$u_n-\varphi_n$ converges weakly. Thus, up to a subsequence, $u_n\rightharpoonup u_0$ in
$H^1(\Omega)$ for some $u_0\in H^1(\Omega)$.

Let $u\in H^1_0(\Omega)$. Note that $u+\varphi_n\in H^1_{\varphi_n}(\Omega)$.
By minimality of $u_n$ we have $\mathcal{E}(u_n,\Omega)\leq \mathcal{E}(u+\varphi_n,\Omega)$.
Using the semicontinuity of the $H^1$-norm and Fatou's lemma, we obtain (taking the $\liminf$) that
$\mathcal{E}(u_0,\Omega)\leq \mathcal{E}(u,\Omega)$. That is, $u_0$ is a minimizer in $H^1_0(\Omega)$.

Finally, assume by contradiction that, for $n$ large enough, $u_n$ is antisymmetric with respect to a hyperplane. Then
\begin{equation}\label{lalalala}
0=\int_\Omega u_n\,dx\longrightarrow \int_\Omega u_0\, dx\quad\textrm{as }n\rightarrow+\infty.
\end{equation}
Since $D^2\mathcal{E}(0)(\xi,\xi) < 0$ for some $\xi\in H^1_0(\Omega)$, any minimizer $u_0$ of
$\mathcal{E} (\cdot, \Omega)$
in $H^1_0(\Omega)$ cannot be identically zero (otherwise $u\equiv 0$ would be a minimizer,
and would be unstable by hypothesis, a contradiction).
In addition, $u_0$ has constant sign in $\Omega$ (since its absolute value also
minimizes; one uses here an argument as in Proposition~\ref{prop:Schubert}).
This and \eqref{lalalala} give a contradiction, obtaining that for $n$ large enough
$u_n$ is a minimizer of $\mathcal{E}(\cdot,\Omega)$ in $H^1_{\varphi_n}(\Omega)$ which is not antisymmetric.
\end{proof}

As we said before, $D^2\mathcal{E}(0)\geq 0$ is equivalent to $\lambda_1(A,b,\Omega)\geq -G''(0)$
(see Remark~\ref{RemD^2}).
Therefore, the first part of the previous proposition can be reformulated as follows.

\begin{Corollary}\label{Cor:unique_crt}
Assume \eqref{6.0}, \eqref{eq:UnifCoercivity}, and \eqref{newG:bis}.
If $\lambda_1 (A,b,\Omega) \geq - G''(0) > - G''(s)$ for all $s\neq 0$,
then $\mathcal{E} (\cdot, \Omega)$ admits a unique critical point in
$H^1_{\varphi}(\Omega)$ for every $\varphi\in (H^1\cap L^\infty)(\Omega)$. In addition,
under assumption~\eqref{eq:Flughafen:bis} it is antisymmetric.
\end{Corollary}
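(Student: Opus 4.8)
The plan is to deduce this corollary directly from Proposition~\ref{Proposition:Marc1}(i) by simply rephrasing the spectral hypothesis. First I would invoke Remark~\ref{RemD^2}, which records the elementary equivalence
$$
   D^2\mathcal{E}(0) \geq 0
   \quad\Longleftrightarrow\quad
   \lambda_1 (A,b,\Omega) \geq - G''(0),
$$
obtained by dividing the quadratic form $D^2\mathcal{E}(0)(\xi,\xi)=\int_{\Omega}\{\langle A(x)\nabla\xi,\nabla\xi\rangle+b(x)G''(0)\xi^2\}\,dx$ by $\int_{\Omega}b(x)\xi^2\,dx$ and taking the infimum over $\xi\in H^1_0(\Omega)\setminus\{0\}$. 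Hence the assumption $\lambda_1(A,b,\Omega)\geq -G''(0)$ in the present statement is precisely the hypothesis $D^2\mathcal{E}(0)\geq 0$ of Proposition~\ref{Proposition:Marc1}(i), while the remaining condition $-G''(0)>-G''(s)$ for all $s\neq 0$ is common to both statements.

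Second, I would apply Proposition~\ref{Proposition:Marc1}(i) verbatim: under \eqref{6.0}, \eqref{eq:UnifCoercivity}, \eqref{newG:bis}, together with these two conditions on $G$ and on $\lambda_1$, the functional $\mathcal{E}(\cdot,\Omega)$ has at most one critical point in $H^1_{\varphi}(\Omega)$ for every $\varphi\in(H^1\cap L^\infty)(\Omega)$ (the argument there uses the convexity of $t\mapsto\mathcal{E}(tu_1+(1-t)u_2,\Omega)$, which forces $h'(0)=h'(1)=0$ to make $h$ constant, and then the strict inequality $G''(s)>G''(0)$ for $s\neq 0$ to conclude $u_1\equiv u_2$). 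Existence of a critical point is provided by Proposition~\ref{prop:ExistenceMin} through the minimizer of $\mathcal{E}$ over $H^1_{\varphi}(\Omega)$, so there is exactly one. Finally, under \eqref{eq:Flughafen:bis} the map $u\mapsto -u\circ\sigma$ preserves the affine space $H^1_{\varphi}(\Omega)$ and leaves $\mathcal{E}$ invariant, hence sends critical points to critical points; by the uniqueness just obtained, $-u\circ\sigma=u$, that is, $u$ is antisymmetric.

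I do not expect a genuine obstacle here, since the corollary is essentially a reformulation of Proposition~\ref{Proposition:Marc1}(i). The only point deserving a line of care is that the equivalence in Remark~\ref{RemD^2} be understood as a true ``if and only if'', including the borderline case $\lambda_1(A,b,\Omega)=-G''(0)$; this is immediate, because the Rayleigh quotient characterization of $\lambda_1$ gives $\int_{\Omega}\langle A\nabla\xi,\nabla\xi\rangle\,dx\geq \lambda_1(A,b,\Omega)\int_{\Omega}b\xi^2\,dx\geq -G''(0)\int_{\Omega}b\xi^2\,dx$ for every $\xi\in H^1_0(\Omega)$, hence $D^2\mathcal{E}(0)\geq 0$, and conversely the nonnegativity of $D^2\mathcal{E}(0)$ on all of $H^1_0(\Omega)$ says exactly that the infimum defining $\lambda_1$ is at least $-G''(0)$.
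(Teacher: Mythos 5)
Your proposal is correct and follows exactly the paper's route: the corollary is obtained by translating the hypothesis $\lambda_1(A,b,\Omega)\geq -G''(0)$ into $D^2\mathcal{E}(0)\geq 0$ via Remark~\ref{RemD^2} and then applying Proposition~\ref{Proposition:Marc1}~(i) (with existence coming from Proposition~\ref{prop:ExistenceMin} and antisymmetry from uniqueness under \eqref{eq:Flughafen:bis}). No gaps to report.
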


The following result gives a lower bound for the eigenvalue
$\lambda_1(A,b,\Omega)$ and will be useful in order to apply
Corollary~\ref{Cor:unique_crt}.
\begin{Proposition} \label{prop:Elizabeth}
Assume \eqref{6.0} and that $A(x)$ is a nonnegative symmetric matrix for all $x\in\overline\Omega$.
The following inequality holds:
\begin{equation}\label{eq:Anna}
  \lambda_1 (A,b ,\Omega)
  \, \geq \,
  \frac{1}{4}
  \inf_{\Omega} \left\{ 2 \, {\rm div} \left( A (x) \frac{\nabla b}{b^2}  \right)
  +
  \Big\langle A (x) \nabla b , \frac{\nabla b}{b^3} \Big\rangle
  \right\} .
\end{equation}
In particular, if $A \equiv a\, {\rm Id}$ and $a  \equiv b$, then
\begin{equation} \label{eq:AnnaRoja}
   \lambda_1 (A,b ,\Omega) \geq \inf_{\Omega}\frac{\Delta \sqrt a}{\sqrt a} .
\end{equation}
\end{Proposition}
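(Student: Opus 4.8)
The plan is to prove the general lower bound \eqref{eq:Anna} by the standard trick of completing the square against a well-chosen multiple of a test function, and then to specialize to $A\equiv a\,{\rm Id}$, $a\equiv b$, where a direct computation will reduce the right-hand side to $\Delta\sqrt{a}/\sqrt{a}$. First I would fix $\xi\in H^1_0(\Omega)$ (which we may take $C^\infty$ by density) and write $\xi = b^{1/2}\eta$, or more conveniently introduce the vector field $V:=\nabla b / b^{2}$ and test the quadratic form against the identity
\begin{equation*}
  0 \leq \int_\Omega \big\langle A(x)\big(\nabla\xi - \tfrac12\,\tfrac{\nabla b}{b}\,\xi\big),\ \nabla\xi - \tfrac12\,\tfrac{\nabla b}{b}\,\xi\big\rangle\, dx,
\end{equation*}
which holds because $A(x)$ is a nonnegative symmetric matrix. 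Expanding the square gives
\begin{equation*}
  \int_\Omega \langle A\nabla\xi,\nabla\xi\rangle\, dx \ \geq\ \int_\Omega \Big\langle A\,\tfrac{\nabla b}{b},\nabla\xi\Big\rangle\,\xi\, dx \ -\ \tfrac14\int_\Omega \Big\langle A\,\tfrac{\nabla b}{b},\tfrac{\nabla b}{b}\Big\rangle\,\xi^2\, dx .
\end{equation*}

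The next step is to rewrite the first term on the right by integration by parts. Note $\langle A\,\tfrac{\nabla b}{b},\nabla\xi\rangle\,\xi = \tfrac12\,\langle A\,\tfrac{\nabla b}{b},\nabla(\xi^2)\rangle = \tfrac12\,\langle A\,b\,\tfrac{\nabla b}{b^2},\nabla(\xi^2)\rangle$. Since $\xi\in H^1_0(\Omega)$ there is no boundary contribution, and using that $A$ is symmetric we get
\begin{equation*}
  \int_\Omega \Big\langle A\,\tfrac{\nabla b}{b},\nabla\xi\Big\rangle\,\xi\, dx
  \ =\ -\,\tfrac12\int_\Omega \xi^2\ {\rm div}\!\Big(A(x)\,\tfrac{\nabla b}{b}\Big)\, dx .
\end{equation*}
It is then convenient to write $\tfrac{\nabla b}{b} = b\cdot\tfrac{\nabla b}{b^2}$ and expand ${\rm div}(A\,\tfrac{\nabla b}{b}) = {\rm div}\big(b\cdot A\tfrac{\nabla b}{b^2}\big) = b\,{\rm div}\big(A\tfrac{\nabla b}{b^2}\big) + \big\langle\nabla b,\, A\tfrac{\nabla b}{b^2}\big\rangle$; combined with $\big\langle A\tfrac{\nabla b}{b},\tfrac{\nabla b}{b}\big\rangle = \big\langle A\nabla b,\tfrac{\nabla b}{b^2}\big\rangle$, plugging these into the inequality above yields
\begin{equation*}
  \int_\Omega \langle A\nabla\xi,\nabla\xi\rangle\, dx
  \ \geq\ \tfrac14\int_\Omega \Big(2\,{\rm div}\Big(A(x)\,\tfrac{\nabla b}{b^2}\Big) + \Big\langle A(x)\nabla b,\,\tfrac{\nabla b}{b^3}\Big\rangle\Big)\, b\,\xi^2\, dx ,
\end{equation*}
after checking that the coefficient of $b\,\xi^2$ is exactly the bracketed quantity appearing in \eqref{eq:Anna}. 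Bounding this bracket below by its infimum over $\Omega$ and dividing by $\int_\Omega b\,\xi^2$ gives \eqref{eq:Anna}. I would double-check the bookkeeping of the factors of $b$ here — that is the one place a stray power could creep in — but it is a routine identity rather than a genuine obstacle.

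Finally, for the special case $A\equiv a\,{\rm Id}$ and $a\equiv b$, I substitute $b=a$ into the bracket: $2\,{\rm div}(a\,\tfrac{\nabla a}{a^2}) + \langle a\nabla a,\tfrac{\nabla a}{a^3}\rangle = 2\,{\rm div}(\tfrac{\nabla a}{a}) + \tfrac{|\nabla a|^2}{a^2} = 2\big(\tfrac{\Delta a}{a} - \tfrac{|\nabla a|^2}{a^2}\big) + \tfrac{|\nabla a|^2}{a^2} = \tfrac{2\Delta a}{a} - \tfrac{|\nabla a|^2}{a^2}$. On the other hand $\Delta\sqrt{a} = {\rm div}(\tfrac{\nabla a}{2\sqrt a}) = \tfrac{\Delta a}{2\sqrt a} - \tfrac{|\nabla a|^2}{4 a^{3/2}}$, so $\tfrac{\Delta\sqrt a}{\sqrt a} = \tfrac{\Delta a}{2a} - \tfrac{|\nabla a|^2}{4a^2} = \tfrac14\big(\tfrac{2\Delta a}{a} - \tfrac{|\nabla a|^2}{a^2}\big)$, which is precisely $\tfrac14$ times the bracket; this matches the $\tfrac14$ in \eqref{eq:Anna} and gives \eqref{eq:AnnaRoja}. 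I do not expect any serious obstacle: the only subtlety is that $A$ is merely assumed nonnegative (not coercive) in this proposition, but that is exactly what the completing-the-square argument needs, and all integrations by parts are justified because $\xi$ has compact support and $A,b,\nabla b$ are continuous (indeed $C^1$) on $\overline\Omega$ with $b>0$.
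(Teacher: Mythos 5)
Your strategy (complete a square involving $\frac{\nabla b}{b}\,\xi$, integrate the cross term by parts, and bound the resulting zero--order coefficient below by its infimum) is the same as the paper's, but you completed the square with the wrong sign, and as a consequence your final display does not follow from the two displays preceding it. From your square you correctly get
$\int_\Omega\langle A\nabla\xi,\nabla\xi\rangle \geq \int_\Omega\xi\,\langle A\tfrac{\nabla b}{b},\nabla\xi\rangle-\tfrac14\int_\Omega\langle A\tfrac{\nabla b}{b},\tfrac{\nabla b}{b}\rangle\,\xi^2$,
and your integration by parts and the identity ${\rm div}\big(A\tfrac{\nabla b}{b}\big)=b\,{\rm div}\big(A\tfrac{\nabla b}{b^2}\big)+b\,\big\langle A\nabla b,\tfrac{\nabla b}{b^3}\big\rangle$ are also correct; but plugging them in gives
$\int_\Omega\langle A\nabla\xi,\nabla\xi\rangle \geq \int_\Omega b\,\xi^2\,\big\{-\tfrac12\,{\rm div}\big(A\tfrac{\nabla b}{b^2}\big)-\tfrac34\big\langle A\nabla b,\tfrac{\nabla b}{b^3}\big\rangle\big\}$,
\emph{not} $\tfrac14\int_\Omega b\,\xi^2\,\big\{2\,{\rm div}\big(A\tfrac{\nabla b}{b^2}\big)+\big\langle A\nabla b,\tfrac{\nabla b}{b^3}\big\rangle\big\}$. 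The two brackets differ by ${\rm div}\big(A\tfrac{\nabla b}{b}\big)/b$, so they coincide only in degenerate cases (e.g.\ $A={\rm Id}$, $b=e^{x_1}$); for $b(x)=e^{|x|^2}$ they already disagree. In the model case $A\equiv a\,{\rm Id}$, $a\equiv b$, your chain of inequalities only proves $\lambda_1\geq\inf_\Omega\{-\tfrac12\Delta(\log a)-\tfrac34|\nabla\log a|^2\}$, which is nonpositive exactly for the log-convex weights of interest, rather than $\inf_\Omega\Delta\sqrt a/\sqrt a$. So the issue is not ``bookkeeping of the factors of $b$''; it is the sign chosen in the completed square.

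The fix is immediate and recovers the paper's proof: since $\nabla(b^{1/2}\xi)=b^{1/2}\big(\nabla\xi+\tfrac{\xi}{2}\tfrac{\nabla b}{b}\big)$, the nonnegative quantity to discard is $\int_\Omega\big\langle A\big(\nabla\xi+\tfrac{\xi}{2}\tfrac{\nabla b}{b}\big),\nabla\xi+\tfrac{\xi}{2}\tfrac{\nabla b}{b}\big\rangle\,dx\geq0$ (the paper phrases this by substituting $\eta=b^{1/2}\xi$ in the Rayleigh quotient and dropping $\langle A\nabla\eta,\nabla\eta\rangle\geq0$); your choice $\nabla\xi-\tfrac{\xi}{2}\tfrac{\nabla b}{b}=b^{1/2}\nabla(b^{-1/2}\xi)$ corresponds to the opposite substitution. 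With the plus sign the cross term is $-\int_\Omega\xi\,\langle A\tfrac{\nabla b}{b},\nabla\xi\rangle$, whose integration by parts yields $+\tfrac12\int_\Omega\xi^2\,{\rm div}\big(A\tfrac{\nabla b}{b}\big)=\tfrac12\int_\Omega b\,\xi^2\big\{{\rm div}\big(A\tfrac{\nabla b}{b^2}\big)+\big\langle A\nabla b,\tfrac{\nabla b}{b^3}\big\rangle\big\}$; adding the term $-\tfrac14\int_\Omega b\,\xi^2\big\langle A\nabla b,\tfrac{\nabla b}{b^3}\big\rangle$ gives exactly $\tfrac14\int_\Omega b\,\xi^2\big\{2\,{\rm div}\big(A\tfrac{\nabla b}{b^2}\big)+\big\langle A\nabla b,\tfrac{\nabla b}{b^3}\big\rangle\big\}$, and \eqref{eq:Anna} follows upon dividing by $\int_\Omega b\,\xi^2$. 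Your last paragraph (the reduction of the bracket to $\tfrac{2\Delta a}{a}-\tfrac{|\nabla a|^2}{a^2}$ and the identity $\tfrac{\Delta\sqrt a}{\sqrt a}=\tfrac14\big(\tfrac{2\Delta a}{a}-\tfrac{|\nabla a|^2}{a^2}\big)$, giving \eqref{eq:AnnaRoja}) is correct and matches the paper.
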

\begin{proof}
Noting that the map
$$
   \xi \longmapsto \eta:=b^{1/2} \xi ,
$$
is a bijection from $H^1_0(\Omega)$ to itself, we obviously have
$$
   \lambda_1 (A,b,\Omega)
   =
   \inf_{\eta \in H^1_0 (\Omega) \setminus \{ 0 \}}
     \frac{\int_{\Omega}  \big\langle A (x) \nabla (\eta b^{-1/2}), \nabla (\eta b^{-1/2}) \big\rangle\,dx }
     {\int_{\Omega} \eta^2\,dx}.
$$
For $\eta \in H^1_0(\Omega) \setminus \{ 0 \}$, using the fact that
$\langle A\nabla\eta,\nabla\eta\rangle\geq 0$, we get
\begin{eqnarray}
  \big\langle A (x) \nabla (\eta b^{-1/2}), \nabla (\eta b^{-1/2}) \big\rangle
  &=&
  \frac{1}{b}
  \Big\langle A (x) \big( \nabla \eta - \frac{\eta}{2} \frac{\nabla b}{b} \big) ,
  \nabla \eta - \frac{\eta}{2} \frac{\nabla b}{b}
  \Big\rangle
  \nonumber\\
  &\geq&
  - \eta
  \Big\langle A (x) \frac{\nabla b}{b^2} , \nabla \eta \Big\rangle
  +
  \frac{\eta^2}{4 b}
  \Big\langle A (x) \frac{\nabla b}{b}  , \frac{\nabla b}{b} \Big\rangle
  \nonumber \\
  &=&
   -
  \Big\langle  A(x) \frac{\nabla b}{b^2} , \nabla \big(\frac{\eta^2}{2} \big) \Big\rangle
  +
  \frac{\eta^2}{4 }
  \Big\langle A (x) \nabla b , \frac{\nabla b}{b^3} \Big\rangle .
  \nonumber
\end{eqnarray}
Integrating and applying the divergence theorem, we get

\begin{eqnarray*}
& & \hspace{-2cm} \int_{\Omega} \big\langle A (x)  \nabla(\eta b^{-1/2}), \nabla(\eta b^{-1/2}) \big\rangle\,dx
\\ & \geq  &
  \frac{1}{4}
  \int_{\Omega}
  \left\{ 2 \hbox{div}  \left( A (x) \frac{\nabla b}{b^2} \right)
  +
  \Big\langle A (x) \nabla b , \frac{\nabla b}{b^3} \Big\rangle
  \right\} \eta^2\,dx
\end{eqnarray*}
and therefore
$$
  \lambda_1 (A,b, \Omega)
  \, \geq \,
  \frac{1}{4}
  \inf_{\Omega} \left\{ 2 \hbox{div}  \left( A (x)  \frac{\nabla b}{b^2}  \right)
  +
  \Big\langle A (x) \nabla b , \frac{\nabla b}{b^3} \Big\rangle
  \right\} ,
$$
which is inequality~\eqref{eq:Anna}.

In the case $A \equiv a\, {\rm Id}$ and $a \equiv b$, inequality~\eqref{eq:Anna} turns out to be equivalent to
$$
  \lambda_1 (a\, {\rm Id},a,\Omega)
  \geq
    \frac{1}{4}\inf_{\Omega} \left\{ 2 \Delta (\log a ) + |\nabla (\log a)|^2\right\}
  =
    \inf_{\Omega}\frac{\Delta\sqrt{a}}{\sqrt{a}}.
$$
\end{proof}

Theorem~\ref{thm:ndim} and Corollary~\ref{cor:OddHigher} follow as an immediate consequence
of Corollary~\ref{Cor:unique_crt} and Proposition~\ref{prop:Elizabeth}.

\begin{proof}[Proof of Theorem~{\rm \ref{thm:ndim}} and Corollary~{\rm \ref{cor:OddHigher}}]
Assume $A \equiv a\, {\rm Id}$ and $a  \equiv b$. By Proposition~\ref{prop:Elizabeth} and
the assumptions of the theorem, we have
$$
\lambda_1 (A,b ,\Omega) \geq \inf_\Omega\frac{\Delta \sqrt a}{\sqrt a}\geq -G''(0)
> -G''(s) \quad \textrm{for all }s\neq 0.
$$
The results now follow from Corollary~\ref{Cor:unique_crt}.
\end{proof}

Let us give some examples of weights $A$ and $b$ for which $D^2\mathcal{E}(0)\geq 0$
(\textit{i.e.,} for which $\lambda_1(A,b, \Omega) \geq - G''(0)$).
We know that in this case, if in addition $-G''(0) > -G''(s)$ for all $s\neq 0$, then $\mathcal{E}$ admits a
unique critical point in $H^1_\varphi(\Omega)$ which, furthermore, is antisymmetric
under assumption \eqref{eq:Flughafen:bis}.

\begin{Example}\label{ex6:2}
\begin{enumerate}
\item[(i)]
The condition $\lambda_1 (A,b,\Omega) \geq - G''(0)$ is always
satisfied for small domains $\Omega$. Indeed, by setting
$\varepsilon\Omega := \{ \varepsilon x \, \colon \, x \in \Omega \}$,
and using the assumption~\eqref{eq:UnifCoercivity} together
with $b \in L^{\infty} (\Omega)$,
we get $\lambda_1 (A,b, \varepsilon\Omega)
\geq c_0\|b\|_\infty^{-1} \varepsilon^{-2} \lambda_1 ({\rm Id}, 1, \Omega)$.
Hence, we have $\lambda_1 (A,b,\varepsilon\Omega) \geq - G''(0)$ for $\varepsilon$ small.

\item[(ii)]
Consider the weights
$$
  A(x) = e^{\alpha |x|^2} {\rm Id}
  \quad\textrm{and}\quad
  b(x) = e^{\alpha |x|^2}
  \quad\textrm{for all }x\in\Omega\subset\mathbb{R}^N,
$$
with
$2 \alpha N \geq -G''(0)$. Then we easily check that the function
$\psi(x)=e^{-\alpha |x|^2}> 0$ is a positive supersolution of the linearized problem
at $u\equiv0$:
$$
  - \hbox{div } \Big( A(x) \nabla \psi \Big) + b(x) G''(0) \psi \geq 0 \quad \textrm{in }\Omega.
$$
It is then standard to conclude that $D^2\mathcal{E}(0)\geq 0$ (multiply the previous inequality by
$\xi^2/\psi$ with $\xi\in H^1_0(\Omega)$, integrate by parts, and use Cauchy-Schwarz).
\item[(iii)]
Note that, for $\alpha>0$, the previous weight $e^{\alpha|x|^2}$ is log-convex.
More generally, assume now
$$
  \langle A(x) \xi, \xi \rangle
  \geq  e^{\alpha|x|^2} |\xi|^2 ,
  \quad
  b(x) \leq e^{\alpha|x|^2}  ,
  \quad
  \textrm{and}
  \quad
  2 \alpha N \geq -G''(0) .
$$
Then
$$
  \frac{\int_{\Omega} \langle A(x) \nabla \xi, \nabla \xi \rangle\,dx}{\int_{\Omega} b(x) \xi^2\,dx}
  \geq
  \frac{\int_{\Omega} e^{\alpha|x|^2} |\nabla \xi|^2\,dx}{\int_{\Omega} e^{\alpha|x|^2} \xi^2\,dx} \,.
$$
By the argument in (ii), this provides examples of weights (which are not necessarily log-convex)
for which critical points are antisymmetric on any domain $\Omega$ (even
in dimension $N=1$).
\item[(iv)]
Assume $|A|$, $b \in L^{\infty} (\mathbb R^N)$ and $\inf_{\mathbb R^N} b >0$.
Then, we easily find a constant $C>0$ such that
$\lambda_1 (A, b, \Omega) \leq C \lambda_1 (\textrm{Id},1,\Omega)$. Therefore, if $G''(0)<0$, for
large domains $\Omega$ it holds that $\lambda_1(A,b,\Omega)<-G''(0)$ (\textit{i.e.},
$D^2\mathcal{E}(0)(\xi,\xi)<0$ for some $\xi\in H^1_0(\Omega)$). Hence, for some $\varphi\in C^1(\overline{\Omega})$
there are minimizers of $\mathcal{E} (\cdot, \Omega)$ in $H^{1}_\varphi (\Omega)$
which are not antisymmetric (by Proposition~\ref{Proposition:Marc1}~(ii)).
\end{enumerate}
\end{Example}

\begin{Remark}\label{rem6:7}
The uniform coercivity condition~\eqref{eq:UnifCoercivity}  is used to
guarantee the  existence of minimizers in Proposition~\ref{prop:ExistenceMin}.
It can be relaxed, and one can consider weights $A(x)$ that
either vanish at some point, or that are not uniformly coercive.
Let us briefly make two observations in this direction.

Let $\Omega \subset\mathbb{R}^N$ a bounded smooth domain that is star-shaped with
respect to the origin. Assume $N\geq 2$ and that there exist $\alpha>0$ and
$\beta\in\mathbb{R}$ such that
$$
   A(\tau x) = \tau^{\alpha} A(x),
   \qquad
   b(\tau x) = \tau^{\beta} b(x)
   \quad
   \textrm{for all } \tau > 0\textrm{ and }x\in\Omega,
$$
with $A$ and $b$ continuous in $\overline{\Omega}$ and positive in $\Omega$.
Then a simple scaling argument shows that
$$
  \lambda_1(A, b, \tau \Omega ) = \tau^{\alpha - \beta - 2} \lambda_1 (A,b, \Omega).
$$

By the weighted Hardy inequality in $H^1(\Omega;|x|^\alpha\,dx)$ we have
\begin{equation}\label{lambda_positive}
\lambda_1(A, b, \Omega ) > 0\quad\textrm{when }\alpha-\beta\leq 2.
\end{equation}
This can be proved integrating with spherical coordinates and using, on every ray,
the typical argument that gives the classical Hardy inequality (through integration
by parts and Cauchy-Schwarz inequality).

Now, if $G''(0)\geq 0$ then $D^2\mathcal{E}(0)\geq 0$ independently of the domain.
In addition, if $G''(0)<0$ and $\alpha - \beta < 2$ then we see by scaling
that $\lambda_1(A, b, \Omega )$ is large for small domains $\Omega$. In particular,
$D^2\mathcal{E}(0) \geq 0$ in this case. On the contrary, when $\alpha - \beta =2$,
$\lambda_1(A,b, \Omega)$ is invariant by dilations of the domain.

An important example of the previous situation is
the weight $A(s,t) = (st)^{m-1}{\rm Id}$, $b(s,t) =  (st)^{m-1}$ in
$\Omega = (0,L)^{2}\subset\mathbb{R}^{2}$
related to the De Giorgi conjecture, as discussed in the Introduction.
In this case $\alpha-\beta=0<2$, and thus we have uniqueness and antisymmetry in
small domains.

Another relevant case of homogeneous weights is given by radial weights.
If  $A(x) = |x|^{\alpha} {\rm Id}$ and $b(x) = |x|^{\beta}$, with $\alpha>2-N$ and $\beta>-N$,
then the value of $\lambda_1(A,b, \Omega)$ is known by the weighted Hardy
inequality in $H^1(\Omega;|x|^\alpha\,dx)$ in the critical case
$\alpha - \beta = 2$ (recall that $0\in\Omega$ since $\Omega$ is assumed to be star-shaped):
$$
  \lambda_1 (A,b,\Omega) = \frac{(N-2+\alpha)^2}{4}.
$$
Hence for this kind of weights, if
\begin{equation}\label{cond_uniq}
   \frac{(N-2+\alpha)^2}{4} \geq - G''(0) > - G''(s)\quad \textrm{for all }s\neq 0
\end{equation}
holds, then $D^2\mathcal{E}(0)\geq0$ and $\mathcal{E}(\cdot,\Omega)$ admits a unique
critical point in $H^1(\Omega;|x|^\alpha\,dx)$. The proof is the same as that of
Proposition~\ref{Proposition:Marc1} with $H^1_\varphi(\Omega)$ replaced by the previous
weighted Sobolev space.

Note that \eqref{cond_uniq} is independent of the domain $\Omega$, and that the
first inequality is satisfied for large dimensions $N$ for any $\alpha>0$.

\end{Remark}

Let us conclude with some remarks in dimension $N=1$. In this case,
we can characterize the weights $a,b$ defined on $\mathbb R$ for which
$\lambda_1 (a,b,I) \geq C$ for some positive constant $C$
independent of the interval $I$. This characterization is similar to
the Muckenhoupt's condition available for Hardy's type inequalities with
weights (see~\cite{Muckenhoupt}). More specifically, referring to Opic
and Kufner~\cite[p.~93]{OpicKufner}, given even weights $a,b$, define
on each interval $I=(-L,L)$ the constant
\begin{equation}\label{M(a,b,I)}
   M (a,b,I) :=
   \sup_{\alpha, \beta \in I} \left\{
            \left( \int_{\alpha}^{\beta} b \right) \left( \int_{\max\{ |\alpha|, |\beta| \} }^{L} \! \!a^{-1} \right)
   \right\} \,.
\end{equation}
Then,
\begin{equation}\label{opic}
    \frac{1}{16 M (a,b,I)} \, \leq \,
    \lambda_1 (a,b,I) \, \leq \,
    \frac{4}{M (a,b,I)}.
\end{equation}
In particular
$$
  \lim_{L \to \infty} \lambda_1(a,b, (-L,L))=0
  \quad \textrm{if and only if} \quad\lim_{L \to \infty} M(a,b,(-L,L)) = \infty \,.
$$

Note that the quantity appearing in \eqref{M(a,b,I)} already appeared in \eqref{eq:KindMuckenhoupt}.

If $a^{-1} , b \in L^1 (\mathbb R)$ we immediately deduce, from
Corollary~\ref{Cor:unique_crt},  \eqref{M(a,b,I)}, and \eqref{opic}, the following result.
\begin{Proposition}
Assume $a^{-1} , b \in L^1 (\mathbb R)$ and that \eqref{eq:A} and \eqref{newG:bis} hold. If
$$
   \frac{1}{ 16 \| a^{-1} \|_{L^1 (\mathbb R)} \| b \|_{L^1 (\mathbb R)} }
   \geq - G'' (0) >  - G''(s) \quad \textrm{for all }s\neq 0,
$$
then the functional $\mathcal{E} (\cdot, (-L,L))$ admits a unique critical point in $H^1_m((-L,L))$
for any $m >0$.
\end{Proposition}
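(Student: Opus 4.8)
The plan is to obtain this statement as a direct consequence of the higher–dimensional uniqueness result Corollary~\ref{Cor:unique_crt} (equivalently, Proposition~\ref{Proposition:Marc1}~(i)), applied in dimension $N=1$ with the scalar weight $A\equiv a$, combined with the two–sided bound \eqref{opic} on $\lambda_1(a,b,I)$ due to Opic and Kufner and an elementary estimate on the constant $M(a,b,I)$.

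First I would check that the hypotheses of Corollary~\ref{Cor:unique_crt} are satisfied on each interval $I=(-L,L)$. Since $a,b\in C^1([-L,L])$ are positive and $[-L,L]$ is compact, the uniform coercivity condition \eqref{eq:UnifCoercivity} holds with $c_0=\min_{[-L,L]}a>0$; condition \eqref{newG:bis} is part of the hypotheses; and, taking $\varphi(x)=mx/L\in (H^1\cap L^\infty)(I)$, we have $H^1_\varphi(I)=H^1_m(I)$, so that uniqueness in $H^1_m(I)$ is exactly what Corollary~\ref{Cor:unique_crt} provides. (Note that $G\in C^2(\mathbb{R})$ is implicit here, since $G''$ appears in the assumed inequality.) It therefore only remains to verify the spectral condition $\lambda_1(a,b,I)\geq -G''(0)$, which by Remark~\ref{RemD^2} is equivalent to $D^2\mathcal{E}(0)\geq 0$.

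To verify it, I would use the lower bound in \eqref{opic}, namely $\lambda_1(a,b,I)\geq 1/(16\,M(a,b,I))$, and bound $M(a,b,I)$ from above uniformly in $L$. Indeed, from the definition \eqref{M(a,b,I)}, for all $\alpha,\beta\in I$ one has $\int_\alpha^\beta b\leq \int_{\mathbb{R}}b=\|b\|_{L^1(\mathbb{R})}$ and $\int_{\max\{|\alpha|,|\beta|\}}^{L} a^{-1}\leq \int_{\mathbb{R}}a^{-1}=\|a^{-1}\|_{L^1(\mathbb{R})}$, whence $M(a,b,I)\leq \|a^{-1}\|_{L^1(\mathbb{R})}\,\|b\|_{L^1(\mathbb{R})}$ for every $L>0$. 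Combining these estimates with the hypothesis gives
\[
\lambda_1(a,b,I) \;\geq\; \frac{1}{16\,M(a,b,I)} \;\geq\; \frac{1}{16\,\|a^{-1}\|_{L^1(\mathbb{R})}\,\|b\|_{L^1(\mathbb{R})}} \;\geq\; -G''(0),
\]
so that, using also $-G''(0)>-G''(s)$ for all $s\neq 0$, Corollary~\ref{Cor:unique_crt} applies and yields the claimed uniqueness of the critical point of $\mathcal{E}(\cdot,(-L,L))$ in $H^1_m((-L,L))$ for any $m>0$.

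I do not expect any genuine obstacle: the statement is essentially a packaging of earlier results. The only points requiring (routine) care are the identification of the one–dimensional instance of \eqref{eq:NDimFunctional:bis}–\eqref{new116} with the functional \eqref{functional} and equation \eqref{E-L}, so that ``critical point in $H^1_m$'' means the same in both settings, and the uniform–in–$L$ character of the bound on $M(a,b,I)$, which is precisely where the global integrability hypotheses $a^{-1},b\in L^1(\mathbb{R})$ enter.
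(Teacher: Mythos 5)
Your proposal is correct and follows essentially the same route as the paper, which deduces the result immediately from Corollary~\ref{Cor:unique_crt} together with \eqref{M(a,b,I)} and \eqref{opic} via the bound $M(a,b,I)\leq \|a^{-1}\|_{L^1(\mathbb{R})}\|b\|_{L^1(\mathbb{R})}$. Your additional remarks (choice of $\varphi(x)=mx/L$ and the uniform-in-$L$ estimate) are just the routine details the paper leaves implicit.
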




\begin{thebibliography}{99}
\bibitem{AAC}
G. Alberti, L. Ambrosio, X. Cabr\'e,
{\it On a long-standing conjecture of E. De Giorgi: symmetry in 3D for general nonlinearities
and a local minimality property},
Acta Appl. Math. {\bf 65} (2001), 9--33.
\bibitem{Alexandrov}
A.D. Alexandrov,
{\it A characteristic property of spheres},
Ann. Mat. Pura Appl. {\bf 58} (1962), 303--315.
\bibitem{Alm-Lieb}
F. Almgren, E.H. Lieb,
{\it Continuity and discontinuity of the spherically decreasing rearrangement},
Journal of the AMS {\bf 2} (1989), 683--773.
\bibitem{AmbrosioCabre}
L.~Ambrosio, X.~Cabr\'e,
{\it Entire solutions of semilinear elliptic equations in $\mathbb R^3$ and a conjecture of De Giorgi},
J. Amer. Math. Soc. {\bf 13} (2000), 725--739.
\bibitem{BeresNiren}
H.~Berestycki, L.~Nirenberg,
{\it Monotonicity, symmetry and antisymmetry of solutions of semilinear elliptic equations},
J. Geom. Phys. {\bf 5} (1988), 237--275.
\bibitem{BNV}
H.~Berestycki, L.~Nirenberg, S.R.S.~Varadhan,
{\it The principal eigenvalue and maximum principle for second-order elliptic operators in general domains},
Comm. Pure Appl. Math. {\bf 47} (1994), 47--92.
\bibitem{BBMP1999}
M.F. Betta, F. Brock, A. Mercaldo, M.R. Posteraro,
{\it A weighted isoperimetric inequality and applications to symmetrization},
J. Inequal. Appl. {\bf 4} (1999), 215--240.
\bibitem{BBMP2008}
M.F. Betta, F. Brock, A. Mercaldo, M.R. Posteraro,
{\it Weighted isoperimetric inequalities on $\mathbb R^n$ and applications
to rearrangements},
Math. Nachr. {\bf 281} (2008), 466--498.
\bibitem{BdGG}
E.~Bombieri, E.~De Giorgi, E.~Giusti,
{\it Minimal cones and the Bernstein problem},
Invent. Math. {\bf 7} (1969), 243--268.
\bibitem{Brock1999}
F. Brock,
{\it Weighted Dirichlet-type inequalities for Steiner symmetrization},
Calc. Var. Partial Differential Equations {\bf 8} (1999), 15--25.
\bibitem{Brock2000}
F. Brock,
{\it Continuous rearrangement and symmetry of solutions of
elliptic problems},
Proc. Indian Acad. Sci. Math. Sci. {\bf 110} (2000), 157--204.
\bibitem{Cabre}
X.~Cabr\'e,
 {\it Uniqueness and stability of saddle-shaped solutions to the Allen-Cahn equation},
J. Math. Pures Appl. {\bf 98} (2012), 239--256.
\bibitem{CR-01}
X. Cabr\'e, X. Ros-Oton,
\textit{Regularity of stable solution up to dimension 7 in domains of double revolution},
Comm. Partial Differential Equations {\bf 38} (2013), 135--154.
\bibitem{CR-02}
X. Cabr\'e, X. Ros-Oton,
\textit{Sobolev and isoperimetric inequalities with monomial weights},
J. Differential Equations {\bf 255} (2013), 4312--4336.
\bibitem{CT1}
X. Cabr\'e, J. Terra,
\textit{Saddle-shaped solutions of bistable diffusion equations in all of $\mathbb{R}^{2m}$},
J. Eur. Math. Soc. {\bf 11} (2009), 819--843.
\bibitem{CT2}
X. Cabr\'e, J. Terra,
\textit{Qualitative properties of saddle-shaped solutions to bistable diffusion equations},
Comm. Partial Differential Equations {\bf 35} (2010), 1923--1957.
\bibitem{Chambers}
G.R. Chambers,
\textit{Proof of the Log-Convex Density Conjecture},
to appear in J. of the European Math. Soc., arXiv:1311.4012.
\bibitem{Coron}
J.-M. Coron,
\textit{The continuity of the rearrangement in $W^{1,p}(\mathbb{R})$},
Ann. Scuola Norm. Sup. Pisa, Cl. Sci (4) {\bf 11} (1984), 57--85.
\bibitem{DeGiorgi}
E. de Giorgi,
Convergence problems for functionals and operators,
{\it Proc. Internat. Meeting on Recent Methods in Nonlinear Analysis} (Rome, 1978) (E. de Giorgi et al., eds.),
Pitagora, Bologna, 1979, 131--188.
\bibitem{DelPinoKowalczykWei}
M. del Pino, M. Kowalczyk, J.C. Wei,
{\it On De Giorgi's conjecture in dimension $N\geq 9$},
Ann. of Math. {\bf 174} (2011), 1485--1569.
\bibitem{EspositoTrombetta}
L.~Esposito, C.~Trombetti,
 {\it Steiner symmetrization: a weighted version of P\'olya-Szeg\"o principle},
NoDEA Nonlinear Differential Equations Appl. {\bf 14} (2007),
219--231.
\bibitem{GhoussoubGui}
N. Ghoussoub,  C. Gui,
{\it On a conjecture of De Giorgi and some related problems},
Math. Ann. {\bf 311} (1998), 481--491.
\bibitem{GNN}
B.~Gidas, W.M. Ni, L.~Nirenberg,
{\it Symmetry and related properties via the maximum principle},
Comm. Math. Phys. {\bf 68} (1979), 209--243.
\bibitem{GiraoWeth}
P. Gir\~ao, T. Weth,
{\it The shape of extremal functions for Poincar\'e-Sobolev-type inequalities in a ball},
J. Funct. Anal. {\bf 237} (2006), 194--223.
\bibitem{GT}
C. Grumiau, C. Troestler,
{\it Oddness of least energy nodal solutions on radial domains},
Electron. J. Differ. Equ. Conf., Conference {\bf 18} (2010), 23--31.
\bibitem{JerisonMonneau}
D.~Jerison, R.~Monneau,
{\it Towards a counter-example to a conjecture of De Giorgi in high dimensions},
Ann. Mat. Pura Appl. {\bf 183} (2004), 439--467.
\bibitem{Kawohl}
B.~Kawohl,
Rearrangements and convexity of level sets in PDE.
Lecture Notes in Mathematics {\bf 1150}.
Springer-Verlag, Berlin, 1985.
\bibitem{Landes}
R. Landes,
{\it Some remarks on rearrangements and functionals with non-constant density},
Math. Nachr. {\bf 280} (2007), 560--570.
\bibitem{LL}
E.H. Lieb, M. Loss, Analysis, AMS Graduate Studies in Mathematics, 1996.
\bibitem{LWW}
Y. Liu, K. Wang, J. Wei,
\textit{Global minimizers of the Allen-Cahn equation in dimension $n\geq 8$},
preprint arXiv:1606.05315.
\bibitem{Muckenhoupt}
B. Muckenhoupt,
{\it Hardy's inequalities with weights},
Studia Math. {\bf 44} (1972), 31--38.
\bibitem{OpicKufner}
B. Opic, A. Kufner,
Hardy-type inequalities.
Pitman Research Notes in Mathematics Series,  219.
Longman Scientific \& Technical, Harlow, 1990.
\bibitem{PolyaSzego}
G. P\'olya, G. Szeg\"o, Isoperimetric Inequalities in Mathematical
Physics. Annals of Mathematics Studies, no. 27, Princeton University
Press, Princeton, N. J., 1951.
\bibitem{RCBM} C. Rosales, A. Ca\~nete, V. Bayle, F. Morgan,
\textit{On the isoperimetric problem in Euclidean space with density},
Calc. Var. Partial Differential Equations {\bf 31} (2008), 27--46.
\bibitem{Savin}
O. Savin,
{\it Regularity of flat level sets in phase transitions},
Ann. of Math. {\bf 169} (2009), 41--78.
\bibitem{NOI}
E.P. Valenti,
{\it On an infinite elastica: well-posedness of the initial value problem and stability of
solitary waves},
PhD Thesis, University of Virginia, 2006.
\bibitem{WeiWinter:2005}
J.C. Wei, M. Winter,
{\it Symmetry of nodal solutions for singularly perturbed elliptic problems on a ball},
Indiana Univ. Math. J. {\bf 54} (2005), 707--741.
\end{thebibliography}
\end{document}